\newtheorem{theorem}{Theorem}[section]
\newtheorem{lemma}[theorem]{Lemma}
\newtheorem{proposition}[theorem]{Proposition}
\newtheorem{corollary}[theorem]{Corollary}
\theoremstyle{definition}
\newtheorem{definition}[theorem]{Definition}
\newtheorem{remark}[theorem]{Remark}
\newtheorem{example}{Example}[section]
\newenvironment{Example}{\begin{example}}{\hfill\qed\end{example}}
\numberwithin{equation}{section}
\newcommand{\diag}{\mathrm{diag}}      
\newcommand{\clo}{\mathrm{clo}}        
\newcommand{\Span}{\mathrm{span}}       
\renewcommand{\Im}{{\ensuremath{\mathrm{Im\,}}}} 
\renewcommand{\Re}{{\ensuremath{\mathrm{Re\,}}}} 
\providecommand{\norm}[1]{\lVert#1\rVert} 
\providecommand{\abs}[1]{\lvert#1\rvert} 
\DeclareMathOperator{\Ran}{Ran} 
\DeclareMathOperator{\ran}{Ran} 
\DeclareMathOperator{\Rank}{Rank} 
\DeclareMathOperator{\sign}{sign} 
\DeclareMathOperator{\sgn}{sign} 
\DeclareMathOperator{\Ker}{Ker}
\DeclareMathOperator{\Dom}{Dom}
\DeclareMathOperator{\Gr}{Gr}
\newcommand{\au}{\underline{a}}
\newcommand{\R}{\mathbb{R}}
\newcommand{\C}{\mathbb{C}}
\newcommand{\N}{\mathbb{N}}
\newcommand{\cH}{{\mathcal H}}
\newcommand{\cP}{{\mathcal P}}
\newcommand{\Me}{{\mathcal M}}
\newcommand{\He}{{\mathcal H}}
\newcommand{\Ke}{{\mathcal K}}
\newcommand{\Ge}{{\mathcal G}}
\newcommand{\Ie}{{\mathcal I}}
\newcommand{\Ee}{{\mathcal E}}
\newcommand{\De}{{\mathcal D}}
\newcommand{\We}{{\mathcal W}}
\newcommand{\cf}{\emph{cf.}}
\newcommand{\ie}{{\emph{i.e.}}}
\newcommand{\eg}{{\emph{e.g.}}}
\newcommand{\verts}{{\mathcal V}}
\title{\textbf{Non-self-adjoint graphs}}
\author{
Amru Hussein$^{a}$, 
David Krej\v{c}i\v{r}\'{i}k$^{b}$ and 
Petr Siegl$^{c,} \footnote{On a leave from $b$.} $
}
\date{
\small
\emph{
\begin{quote}
\begin{itemize}
\item[$a)$]
Institut f\"{u}r Mathematik,
Johannes Gutenberg-Universit\"{a}t Mainz, 
Staud\-ing\-er Weg 9, 55099 Mainz, Germany; 
hussein@mathematik.uni-mainz.de
\item[$b)$]
Department of Theoretical Physics, Nuclear Physics Institute ASCR, 
25068 \v{R}e\v{z}, Czech Republic; 
krejcirik@ujf.cas.cz
\item[$c)$]
Mathematical Institute, University of Bern,
Sidlerstrasse 5, 3012 Bern, Switzerland; 
petr.siegl@math.unibe.ch
\end{itemize}
\end{quote}
}
\medskip
20 August 2013
}
\begin{document}

\maketitle

\begin{abstract}
\noindent
On finite metric graphs we consider Laplace operators,
subject to various classes of non-self-adjoint boundary conditions 
imposed at graph vertices. 
We investigate spectral properties,
existence of a Riesz basis of projectors
and similarity transforms to self-adjoint Laplacians.
Among other things, we describe a simple way
how to relate the similarity transforms between Laplacians 
on certain graphs with elementary similarity transforms between
matrices defining the boundary conditions.
\end{abstract}

\newpage
\tableofcontents

\newpage
\section{Introduction} 
%
The subject of differential operators on metric graphs has attracted 
a lot of attention in the last decades. This topic has become popular 
under the name ``quantum graphs'', referring to its background 
and applications in quantum mechanics. 
Since a quantum system is described by a unitary time evolution,
most of the literature has been concerned 
with \emph{self-adjoint} Schr\"odinger operators. 
For more details and many references, 
we refer to the surveys \cite[Chap.~17]{Exnerbook} and \cite{KuchBer} 
together with the articles \cite{PKWaves, PKQG1, Kurasov2}. 

In other areas of physics, where a system is described 
by non-conservative equations of motion,
it is necessary to deal with \emph{non-self-adjoint} operators.
As an example, let us mention 
stochastic processes on metric graphs \cite{VK2007, KPS2008, KPS2012}.
Furthermore, there have been recent attempts 
to develop ``quasi-Hermitian quantum mechanics'', 
where physical observables are represented by non-self-adjoint operators~$T$
satisfying the \emph{quasi-self-adjointness} relation
\begin{equation}\label{quasi}
  T^* = \Theta T \Theta^{-1}
\end{equation}
with a bounded, boundedly invertible and positive operator $\Theta=G^* G$.
The idea goes back to the paper~\cite{SGH} by nuclear physicists,
where~$\Theta$ is called \emph{metric},
since it defines a new inner product in the underlying Hilbert space
with respect to which~$T$ becomes self-adjoint. 
In other words, $T$~is similar to a self-adjoint operator via the similarity 
transformation~$G$, namely $GTG^{-1}$ is self-adjoint.
A consistent quantum theory can be built for quasi-self-adjoint operators.

It is not easy to decide whether a given non-self-adjoint operator
is quantum-mechanically admissible, \ie~quasi-self-adjoint.
A necessary condition for the quasi-self-adjointness of~$T$ 
is that its spectrum $\sigma(T)$ is real.
It was noticed that many operators commuting with an anti-unitary operator 
called \emph{symmetry} have the real spectrum.
This observation is behind the boom of the so-called
``$\mathcal{PT}$-symmetric quantum mechanics'' 
\cite{Bender07,Mostafazadeh},
which we use here as a source of 
interesting quasi-self-adjoint models. 
In this context, non-self-adjoint operators on metric graphs
were previously considered in \cite{Astudillo,Znojil}.

The present work is motivated by the growing interest 
in spectral theory on network structures 
and by the fresh relevance of non-self-adjoint operators 
in quantum mechanics.
We regard metric graphs as an intermediate step
between Sturm-Liouville operators on intervals
and partial differential operators.
Indeed, we shall be able to rigorously investigate some non-trivial
properties related to the spectrum and quasi-self-adjointness
that one can hardly expect to obtain in such a generality
in higher dimensions.

We restrict ourselves to a simple differential operator on the graph
-- namely the Laplacian --
but consider arbitrary non-self-adjoint
interface or boundary conditions on the graph vertices.
The standard material about Laplacians on metric graphs
is collected in the forthcoming Section~\ref{Sec.graphs}.
In a long Section~\ref{Sec.bc} divided into many subsections
we introduce various classes
of boundary conditions for the Laplacian.
The emphasis is not put on a systematic classification
of non-self-adjoint boundary conditions,
but rather on a diversity motivated by different applications
and on intriguing examples with wild spectra.

Spectral theory for the Laplacians is developed in Section~\ref{sec:spec}.
There we also present an explicit integral-type formula for the resolvent,
with a proof postponed to Appendix~\ref{Sec.App}. 
In Section~\ref{sec:riesz},
we apply an abstract result of Agranovich~\cite{Agr94}
to show that the eigensystem of a non-self-adjoint Laplacian 
on a compact metric graph contains a Riesz basis of subspaces.

Finally, in Section~\ref{sec:sim} we discover a simple way
how to relate the similarity transforms between Laplacians 
on graphs with elementary similarity transforms between
matrices defining the boundary conditions.
This main result enables us not only to effectively analyse 
the problem of quasi-self-adjointness for such graphs
but it turns out to be technically useful 
for self-adjoint Laplacians, too.

\section{The Laplacian on finite metric graphs}\label{Sec.graphs} 
%
Metric graphs are locally linear one dimensional spaces 
with singularities at the vertices, 
and one can think roughly of a metric graph as a union 
of finitely many finite intervals $[0,a_i]$, 
with $a_i \in (0,\infty)$, 
or semi-infinite intervals $[0,\infty)$
glued together at their end points. 
This intuitive picture is formalised here
by recalling from \cite{VKRS2006,VKScattering,KPS2008}
some notation and basic definitions.

\paragraph{Graph as a topological space}
A \emph{graph} is a $4$-tuple 
$\Ge = \left( \verts, \Ie,\Ee, \partial \right)$, 
where $\verts$ denotes the set of \textit{vertices}, 
$\Ie$ the set of \textit{internal edges} 
and $\Ee$ the set of \textit{external edges}, 
where the set $\Ee \cup \Ie$ is summed up in the notion \textit{edges}. 
The \textit{boundary map} $\partial$ assigns to each internal edge $i\in \Ie$ 
an ordered pair of vertices $\partial (i)=\left(\partial_-(i),\partial_+(i)\right)\in \verts \times \verts$, 
where $\partial_-(i)$ is called its \textit{initial vertex} 
and $\partial_+(i)$ its \textit{terminal vertex}. 
Each external edge $e\in \Ee$ is mapped by $\partial$ onto a single, 
its initial, vertex. 
The \textit{degree} $\deg(v)$ of a vertex $v\in \verts$ 
is the number of edges with initial vertex $v$ plus the number of edges 
with terminal vertex $v$. 
A graph is called \textit{finite} if $\abs{\verts}+\abs{\Ie}+\abs{\Ee}<\infty$ 
and a finite graph is called \textit{compact} if $\Ee=\emptyset$.

\paragraph{Graph as a metric space}
A graph $\Ge$ is endowed with the following metric structure. 
Each internal edge $i\in \Ie$ is associated 
with an interval $[0,a_i]$, with $a_i>0$, 
such that its initial vertex corresponds to~$0$ 
and its terminal vertex to~$a_i$. 
Each external edge $e\in \Ee$ is associated to the half line $[0,\infty)$ 
such that $\partial(e)$ corresponds to~$0$. 
The numbers~$a_i$ are called \textit{lengths} of the internal edges $i\in \Ie$ 
and they are summed up into the vector 
$\au=\{a_i\}_{i\in \Ie}\in (0,\infty)^{\abs{\Ie}}$. 
The $2$-tuple consisting of a finite graph endowed with a metric structure is called a \textit{metric graph} $(\Ge,\au)$. 
The metric on $(\Ge,\au)$ is defined via minimal path lengths. 

\paragraph{Graph as a measure space}
Equipping each edge of the metric graph with
the one-dimensional Lebesgue measure, 
we obtain a measure space. 
Any function $\psi \colon (\Ge,\au) \rightarrow \C$ can be written as 
\begin{eqnarray*}
\psi(x_j)= \psi_j(x), & \mbox{where} & \psi_j \colon I_j \rightarrow \C,
\end{eqnarray*}
with
\begin{equation*}
I_j= \begin{cases} [0,a_j], & \mbox{if} \ j\in \Ie, \\ [0,\infty), &\mbox{if} \ j\in \Ee.  \end{cases}
\end{equation*}
Occasionally we write also $\psi_j(x)=\psi_j(x_j)$. One defines 
\begin{equation*} 
\int_{\Ge} \psi := \sum_{i\in \Ie}  \int_{0}^{a_i}  \psi(x_i) \, dx_i +  \sum_{e\in \Ee}  \int_{0}^{\infty}  \psi(x_e)\, dx_e, 
\end{equation*}
where $dx_{i}$ and $dx_e$ refers to integration with respect to the Lebesgue measure on the intervals $[0,a_i]$ and $[0,\infty)$, respectively. 

\paragraph{Graph as a Hilbert space}
Given a finite metric graph $(\Ge,\au)$ one considers the Hilbert space
\begin{eqnarray*}
 \He \equiv \He(\Ee,\Ie,\au)= \He_{\Ee} \oplus \He_{\Ie}, & \displaystyle{\He_{\Ee}= \bigoplus_{e\in\Ee} \He_e,} & \He_{\Ie}= \bigoplus_{i\in\Ie} \He_i,
\end{eqnarray*}     
where $\He_j= L^2(I_j;\C)$. Hence, the scalar product in $\He$ is given by 
\begin{equation*} 
\langle \psi,\varphi\rangle =\int_{\Ge} \psi\, \overline{\varphi}. 
\end{equation*}

\paragraph{Graph as an energy space}
Denote by $\We_j$, $j\in \Ee \cup \Ie$ the set of all functions $\psi_j\in \He_j$ which are absolutely continuous with square integrable derivative $\psi_j^{\prime}$, and set
\begin{equation}\label{W-space}
  \We=\bigoplus_{j\in \Ee \cup \Ie} \We_j
  .
\end{equation}
With the scalar product defined by 
$$\langle \psi,\varphi\rangle_{\We}:= \langle \psi^{\prime},\varphi^{\prime}\rangle+\langle \psi,\varphi\rangle$$
the space $\We$ becomes a Hilbert space. 

By $\De_j$ with $j\in \Ee \cup \Ie$ denote the set of all $\psi_j\in \He_j$ such that $\psi_j$ and its derivative $\psi_j^{\prime}$ are absolutely continuous and its second derivative $\psi_j^{\prime\prime}$ is square integrable. Let $\De_j^0$ denote the set of all elements $\psi_j\in \De_j$ with
\begin{eqnarray*}
\psi_j(0)=0, &  \psi^{\prime}(0)=0, & \mbox{for}  \ j\in \Ee,    \\
\psi_j(0)=0, &  \psi^{\prime}(0)=0, & \psi_j(a_j)=0, \  \psi^{\prime}(a_j)=0, \ \mbox{for} \ j\in \Ie. 
\end{eqnarray*}
The sets
\begin{eqnarray*}
\De= \bigoplus_{j\in \Ee \cup \Ie} \De_j & \mbox{and} &  \De^0= \bigoplus_{j\in \Ee \cup \Ie} \De_j^0
\end{eqnarray*}
together with the scalar product defined by 
$$\langle \psi,\varphi\rangle_{\De}:= \langle \psi^{\prime\prime},\varphi^{\prime\prime}\rangle+\langle \psi,\varphi\rangle_{\We}$$
become Hilbert spaces, such that $\De^0\subset\De$ is closed.   

\paragraph{Graph as a Laplacian}
Let $\Delta$ be the differential operator
\begin{eqnarray*}
\left( \Delta \psi\right)_j (x) 
= \frac{d^2}{dx^2}\psi_j(x), & j\in \Ee\cup \Ie, & x\in I_j,
\end{eqnarray*}
with domain $\De$, 
and $\Delta^0$ its restriction on the domain $\De^0$. 
%
It is known that the operator $\Delta^0$ is a closed symmetric operator with deficiency indices $(d,d)$, where 
\begin{equation}\label{d-notation}
  d:=\abs{\Ee}+ 2\abs{\Ie}
  ,
\end{equation}
and its Hilbert space adjoint is $(\Delta^0)^*=\Delta$; 
see, \eg, \cite[Sec.~4.8]{Exnerbook}. 

Any closed extension $-\widetilde{\Delta}$ of $-\Delta^0$ satisfying
\begin{equation}\label{extensions}
\Delta^0 \subset \widetilde{\Delta} \subset \Delta
\end{equation}
will be called the \emph{Laplacian} on~$(\Ge,\au)$.
Self-adjoint Laplacians on graphs are well studied. 
The aim of this paper is to discuss extensions 
of $-\Delta^0$ which are not necessarily self-adjoint.

The extensions $\widetilde{\Delta}$ of $\Delta^0$ with~\eqref{extensions} 
can be discussed in terms of boundary or matching conditions imposed at the endpoints of the edges. For this purpose one defines for $\psi\in \De$ the vectors of boundary values
\begin{eqnarray*}
\underline{\psi}= \begin{bmatrix} \{\psi_{e}(0)\}_{e\in\Ee} \\ 
\{\psi_{i}(0)\}_{i\in\Ie} \\
\{\psi_{i}(a_i)\}_{i\in\Ie}
\end{bmatrix} &\mbox{and} & 
\underline{\psi}^{\prime}= \begin{bmatrix} \{\psi_{e}^{\prime}(0)\}_{e\in\Ee} \\ 
\{\psi_{i}^{\prime}(0)\}_{i\in\Ie} \\
\{-\psi_{i}^{\prime}(a_i)\}_{i\in\Ie}
\end{bmatrix}.
\end{eqnarray*}
One introduces the auxiliary Hilbert space
\begin{equation*}
\Ke \equiv \Ke(\Ee, \Ie) = \Ke_{\Ee}  \oplus \Ke_{\Ie}^- \oplus \Ke_{\Ie}^+
\end{equation*}
with $\Ke_{\Ee} = \C^{\abs{\Ee}}$ and $\Ke_{\Ie}^{(\pm)} = \C^{\abs{\Ie}}$. One sets 
\begin{equation*}
[\psi]:= \underline{\psi} \oplus \underline{\psi^{\prime}} \in \Ke \oplus \Ke.
\end{equation*}

Any extension $\widetilde{\Delta}$ with~\eqref{extensions}  
can be associated with a subspace $\Me\subset \Ke^2:=\Ke \oplus \Ke$ such that $\widetilde{\Delta}=\Delta(\Me)$ is the restriction of $\Delta$ to the domain
\begin{equation*}
\Dom(\Delta(\Me))= \{ \psi \in \De \mid [\psi] \in \Me   \}.
\end{equation*}

\section{Classification of boundary conditions}\label{Sec.bc}
%
There are various ways to parametrise the subspaces $\Me\subset \Ke^2$. In the following some parametrisations are discussed starting with self-adjoint boundary conditions and then transferring the methods to non-self-adjoint ones. 

Given linear maps $A,B$ in $\Ke$, one defines
\begin{eqnarray*}
(A, \, B)\colon \Ke^2 \rightarrow \Ke, & (A, \, B) (\chi_1 \oplus \chi_2) = A\chi_1 + B \chi_2 & \mbox{for } \chi_1,\chi_2\in\Ke,
\end{eqnarray*}
and sets
\begin{equation*}
\Me(A,B):=\Ker (A, \, B).
\end{equation*}
If $\dim \Me \geq d$ there are appropriate operators $A,B$ acting in $\Ke$ such that $\Me=\Me(A,B)$, and then an equivalent description of $\Dom (\Delta(\Me))$ is that it contains all functions $\psi\in \De$ satisfying the linear 
\emph{boundary conditions}
\begin{equation}\label{AB}
A \underline{\psi} + B \underline{\psi}^{\prime} = 0.
\end{equation}
In this case one also writes equivalently $\Delta(\Me)=\Delta(A,B)$. 
Note that the parametrisation by the matrices $A$ and $B$ is not unique. 
Indeed, operators $\Delta(A,B)$ and $\Delta(A^{\prime},B^{\prime})$ agree if and only if the corresponding spaces $\Me(A,B)$ and $\Me(A^{\prime},B^{\prime})$ agree. Therefore we introduce  
\begin{definition}
Boundary conditions defined by $A,B$ and $A^{\prime},B^{\prime}$ are called \textit{equivalent} if $\Me(A,B)=\Me(A^{\prime},B^{\prime})$.
\end{definition}

Notice that the boundary conditions 
are equivalent if and only if there exists 
an invertible operator $C$ in $\Ke$ such that simultaneously 
\begin{eqnarray*}
A^{\prime} = CA & \mbox{and} & B^{\prime} = CB.
\end{eqnarray*}

\subsection{Self-adjoint boundary conditions}
Recall that any self-adjoint realisation of $\Delta$ can be parametrised as $\Delta(A,B)$, where the matrices $A$ and $B$ satisfy
$AB^*=BA^*$ and $\dim \Me(A,B)=d$, 
where~$d$ is defined in~\eqref{d-notation}; 
see, \eg, \cite[Lem.~2.2 and below it]{VKRS1999}. 

It is a classical result that there is a one-to-one correspondence between unitary operators $U$ in $\Ke$ and self-adjoint realisations of $\Delta$. More precisely, any self-adjoint extension of $\Delta^0$ can be defined by the boundary conditions
\begin{equation}\label{U}
-\frac{1}{2}\left(U-\mathds{1} \right) \underline{\psi} + \frac{1}{2ik}\left(U+\mathds{1} \right)\underline{\psi}^{\prime} = 0,
\end{equation}
for $k>0$; 
see, \eg, \cite[Sec.~3]{Harmer}.

The link between the parametrisation by unitary operators $U$ and the one by matrices $A$ and $B$ in \eqref{AB} is given by a Cayley transform. 
For $A,B$ defining a self-adjoint Laplacian, 
consider, for $k\in \C\setminus \{0\}$ such that $A+ik B$ is invertible, 
the transform 
\begin{equation}\label{S}
\mathfrak{S}(k,A,B):= - \left(A+ik B \right)^{-1}\left(A-ik B \right).
\end{equation}
For $k>0$ the operator $\mathfrak{S}(k,A,B)$ is unitary
\cite[Thm.~2.1]{VKRS1999}
and one can choose $U=\mathfrak{S}(k,A,B)$ in \eqref{U}, 
\cf~\cite[p.~209]{VKRS2006}. 
For  self-adjoint Laplacians on graphs with $\Ie=\emptyset$ the matrix $\mathfrak{S}(A,B,k)$ admits also the interpretation as the \textit{scattering matrix} 
for a certain scattering pair~\cite{VKScattering}.

\subsection{Regular boundary conditions}\label{subsec:regbc}
The transform $\mathfrak{S}(k,A,B)$ can be defined for non-self-adjoint boundary conditions as well whenever $A+ikB$ is invertible, and then $\mathfrak{S}(k,A,B)$ is independent of the concrete choice of $A,B$ representing $\Me=\Me(A,B)$. So, whenever $A+ikB$ is invertible for $k\in \C \setminus \{0\}$ one re-obtains from $\mathfrak{S}(k,A,B)$ equivalent boundary conditions of the form \eqref{AB} by
\begin{eqnarray}\label{AS}
A_{\mathfrak{S}}:= - \frac{1}{2} \left(\mathfrak{S}(k,A,B) -\mathds{1}\right) & \mbox{and} & B_{\mathfrak{S}}:=  \frac{1}{2ik} \left(\mathfrak{S}(k,A,B) +\mathds{1}\right).
\end{eqnarray}
This follows from the equalities
\begin{eqnarray*}
(A+ikB) A_{\mathfrak{S}}=A & \mbox{and} &  (A+ikB)B_{\mathfrak{S}}=  B 
\end{eqnarray*}
used in \cite[proof of Lem.~3.4]{KPS2008}. 
A necessary condition for the definition of $\mathfrak{S}(k,A,B)$ is that $\dim \Me(A,B)=d$, but this is not sufficient.
Actually, since $\det(A+ik B)$ is a polynomial in $k$ of degree at the most $d$, $A+ik B$ is not invertible either for every $k\in \C$ or only for finitely many values $k\in\C$. 
\begin{definition}\label{Def.regular}
Boundary conditions~\eqref{AB} defined by $A,B$ 
with $\dim\Me(A,B)=d$ such that $A+ik B$ is invertible 
for some $k\in\C$ 
are called \emph{regular} boundary conditions. 
\end{definition}

\subsection{Other notions of regular boundary conditions}\label{rem:Naimark}
The reader is warned that there exist 
further parametrisations and classifications of boundary conditions 
for the second derivative operator acting on intervals. 
For instance, the classification given in \cite[Sec.~XIX.4]{DSIII} 
is based on the structure of certain determinants related to the secular equation, 
and this gives rise to an alternative regularity assumption
\cite[Hypothesis XIX.4.1]{DSIII} on boundary conditions. 
The aim in \cite{DSIII} is to define \emph{spectral operators} 
and the regularity hypothesis goes back to \cite{Birkhoff1, Birkhoff2}.

That the regularity hypothesis formulated in \cite[Hypothesis XIX.4.1]{DSIII} does not agree with the notion of regular boundary conditions introduced 
in our Definition~\ref{Def.regular}
follows already from \cite[Ex.~XIX.6(d)]{DSIII}, 
which is discussed here as Example~\ref{exDS} below. 
The boundary conditions given in \cite[Ex.~XIX.6(d)]{DSIII} 
are called \emph{intermediate} boundary conditions 
and are an example of a class of boundary conditions 
not satisfying the regularity hypothesis \cite[Hypothesis XIX.4.1]{DSIII}, see also \cite[p.383]{Birkhoff2}, whereas they are regular in the sense introduced here.

In general it seems difficult to make precise statement on the secular equation for -- in our sense -- regular boundary conditions. More generally, when considering non-compact graphs, \ie~$\Ee\neq \emptyset$, 
there is no straightforward generalisation of the regularity hypothesis of 
\cite[Sec.~XIX.4]{DSIII} since it is dealing with operators with discrete spectrum. 

\subsection{Irregular boundary conditions}
Boundary conditions defined by $A,B$ with $\dim\Me(A,B)=d$ 
which are not regular will be called \emph{irregular}. 
We do not include the situations $\dim\Me(A,B)\not=d$
into our notion of irregular boundary conditions,
since they are not spectrally interesting.
Indeed, it follows from Proposition~\ref{prop:dimM} below
that $\sigma(-\Delta(A,B))=\C$ whenever $\dim\Me(A,B)\not=d$.

The class of regular boundary conditions covers many relevant and interesting cases, whereas the irregular boundary conditions seem 
to be rather pathological. 
Indeed, the latter are typically associated with operators 
that have empty resolvent set or empty spectrum,
even if $\dim\Me(A,B)=d$ holds.

\begin{Example}[Indefinite Laplacian, no resolvent set]\label{sgnsgn}
Consider the boundary conditions~\eqref{AB} given by 
\begin{eqnarray*}
A=\begin{pmatrix} 1 & -1 \\ 0 & 0\end{pmatrix} 
&\mbox{and} & B=\begin{pmatrix}  0 & 0 \\ 1 & -1 \end{pmatrix}
\end{eqnarray*} 
for the graph  $\Ge=(V,\partial,\Ee)$ consisting of 
two external edges $\Ee=\{e_1,e_2\}$ and one vertex $\partial(e_1)=\partial(e_2)$. Identifying this graph with the real line, 
the operator $-\Delta(A,B)$ corresponds to the indefinite operator 
$$
  -\sign(x)\frac{d}{dx}\sign(x)\frac{d}{dx}
  \qquad\mbox{on}\qquad
  L^2(\R)
$$ 
with its natural domain
$
  \{\psi \in W^{1,2}(\R) \,|\, (\psi'\sign)' \in L^2(\R)\}
$. 
This operator is studied 
within the framework of Krein space theory in \cite[Sec.~5]{Kuzel}.

This example demonstrates in particular 
that $\dim \Me(A,B) =d$ is a necessary 
but not a sufficient condition for $A,B$ 
to define regular boundary conditions. 
Indeed, $\dim \Me(A,B)=2=d$ in this example,
while $A+ik B$ is invertible for no complex~$k$.
(As a consequence, 
the statement in \cite[observation below Ass.~2.1]{KPS2008} 
is not correct in general, 
but it holds for the boundary conditions defining 
m-accretive operators studied there.) 

Note that the equation $\det (A+ik B)=0$ with $\Im k>0$ 
is the secular equation for the spectral problem 
associated with $-\Delta(A,B)$, 
\cf~Subsection~\ref{subsec:ev} below. Therefore the spectrum of the operator described in the present example is entire~$\C$. 
This fact will be explained also in Subsection~\ref{subsec:irr} 
by means of a similarity transform.
\end{Example}

\begin{Example}[Totally degenerate boundary conditions, 
no spectrum]\label{ex:emptyspec}
This example is overtaken from \cite[Sec.~XIX.6(b)]{DSIII}.
Consider the interval $[0,1]$ and the irregular boundary conditions defined by 
\begin{eqnarray*}
A=\begin{bmatrix}1 & 0 \\ 0 & 0  \end{bmatrix}  & \mbox{and} & B=\begin{bmatrix} 0 & 0 \\ 1 & 0  \end{bmatrix}.
\end{eqnarray*}
Then $\dim\Me(A,B)=2=d$ and the boundary conditions correspond to
\begin{eqnarray*}
\psi(0)=0 & \mbox{and} & \psi^{\prime}(0)=0,
\end{eqnarray*}
whereas on the other endpoint no boundary conditions are imposed. By integration one can show that this operator is boundedly invertible, and for the compact embedding $\De \hookrightarrow \He$ the inverse is compact, and hence the operator $-\Delta(A,B)$ has only point spectrum. However, a direct computation shows that for these boundary conditions there are no eigenvalues, and therefore the spectrum of $-\Delta(A,B)$ is empty.    
\end{Example}

By inspection of the previous examples, 
it is straightforward to identify the mechanism
which is behind the irregularity of the boundary conditions.
\begin{proposition}\label{KerAB}
Let $A,B$ be maps in $\Ke$ such that $\dim \Me(A,B)=d$. Then $A,B$ define irregular boundary conditions if and only if 
\begin{eqnarray*}
\Ker A \cap \Ker B\neq \{0\}.  
\end{eqnarray*} 
\end{proposition}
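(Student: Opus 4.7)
The implication ``$\Leftarrow$'' is immediate: if $0 \neq v \in \Ker A \cap \Ker B$, then $(A+ikB)v = Av + ikBv = 0$ for every $k\in\C$, so $A+ikB$ is not invertible for any $k \in \C$ and the boundary conditions are irregular by Definition~\ref{Def.regular}.

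For the converse, assume that $\det(A+ikB) \equiv 0$ as a polynomial in $k$. My plan is to exhibit a non-zero polynomial vector $v(k) = v_0 + kv_1 + \cdots + k^n v_n$ with values in $\Ke$ annihilated by the pencil $A+ikB$, chosen of minimal degree $n$. Its existence follows either from the Cayley-type identity $(A+ikB)\,\mathrm{adj}(A+ikB) = \det(A+ikB)\cdot\mathds{1} \equiv 0$---whose columns are already polynomial null vectors of $A+ikB$---or, more robustly, from the Smith normal form of $A+ikB$ viewed as a matrix over the principal ideal domain $\C[k]$. Equating coefficients of successive powers of $k$ in $(A+ikB)v(k)=0$ yields the Jordan-type chain
\[
A v_0 = 0, \qquad A v_j = -i B v_{j-1}\ \ (1 \le j \le n), \qquad B v_n = 0.
\]
In the case $n=0$ one immediately reads off $v_0 \in \Ker A \cap \Ker B \setminus \{0\}$, concluding the proof.

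The main obstacle is excluding the case $n \ge 1$; this is where the standing hypothesis $\dim \Me(A,B)=d$ must enter. This hypothesis is equivalent to the surjectivity of $(A,B)\colon \Ke\oplus\Ke \to \Ke$, or dually to $\Ker A^*\cap\Ker B^*=\{0\}$. My intended strategy is to apply the same polynomial-null-vector construction to the adjoint pencil $A^* + ikB^*$, obtaining a left polynomial null vector $w(k)$ of minimal degree $m$ which must satisfy $m \ge 1$ because the dimension hypothesis excludes $m=0$, and then to combine the right chain $\{v_j\}$ with the left chain $\{w_j\}$ in order to modify $v(k)$ into a strictly shorter polynomial null vector, contradicting the minimality of $n$. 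Equivalently, one can recast the argument in the language of the Kronecker canonical form of $A+ikB$: the dimension hypothesis forbids the degenerate left singular blocks, and the task is to show that this in turn rules out the right singular blocks responsible for $n \ge 1$. Making either variant of this reduction precise is the technical heart of the proof.
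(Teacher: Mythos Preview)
Your ``$\Leftarrow$'' direction is fine and matches the paper. For the ``$\Rightarrow$'' direction, your instinct that the argument is incomplete is correct --- but the obstruction is more serious than a missing technical step: the implication is actually \emph{false} as stated, so neither your Kronecker-form strategy nor any other can succeed.

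Concretely, take $d=3$ and
\[
A=\begin{pmatrix}0&1&0\\0&0&0\\0&0&1\end{pmatrix},\qquad
B=\begin{pmatrix}1&0&0\\0&0&1\\0&0&0\end{pmatrix}.
\]
Here $\Ran A+\Ran B=\Ke$, so $(A,B)$ is surjective and $\dim\Me(A,B)=3=d$. One checks $\Ker A=\Span\{e_1\}$ and $\Ker B=\Span\{e_2\}$, hence $\Ker A\cap\Ker B=\{0\}$. Yet
\[
A+ikB=\begin{pmatrix}ik&1&0\\0&0&ik\\0&0&1\end{pmatrix}
\]
has its last two rows proportional for every $k$, so $\det(A+ikB)\equiv 0$ and the boundary conditions are irregular. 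In your Kronecker language this pencil is $L_1\oplus L_1^{T}$: the minimal right polynomial null vector is $(1,-ik,0)^{T}$ of degree $n=1$, and there is a left singular block of index $1$, but \emph{no} $L_0$ or $L_0^{T}$ block. The hypothesis $\dim\Me(A,B)=d$ only rules out $L_0^{T}$ blocks (zero rows of $(A,B)$), not higher left singular blocks, so it cannot force the presence of an $L_0$ block. Your proposed reduction --- ``no degenerate left singular blocks implies the minimal right null vector has degree zero'' --- is therefore exactly what fails.

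The paper's own argument contains the same error in a more elementary form: from $\underline{\psi}\in\Ker A$ it infers $(A+ikB)\underline{\psi}=ikB\underline{\psi}=0$ for all $k$, tacitly assuming that a vector in $\Ker A$ stays in $\Ker(A+ikB)$ as $k$ varies. In the example above this fails: $e_1\in\Ker A$ but $(A+ikB)e_1=ike_1\neq 0$. Notice also that the paper's proof never invokes the hypothesis $\dim\Me(A,B)=d$, which you rightly identified as essential. Only the ``$\Leftarrow$'' direction --- which is all that is used in the paper's examples --- is actually correct.
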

\begin{proof}
If $\Ker A \cap \Ker B\neq \{0\}$, 
then for a non-zero 
$\underline{\psi}\in \Ker A \cap \Ker B$ 
one has $(A+ik B)\underline{\psi}=0$ for any $k\in \C$. 
The other way round, if $A+ik B$ is not invertible for any $k\in \C$, then $\Ker A\neq \{0\}$, since otherwise one could consider $\mathds{1}+ik A^{-1}B$ which is invertible for $k$ sufficiently small. 
So, for non-zero $\underline{\psi}\in \Ker A$ 
one has $ik B\underline{\psi}=0$ 
for any $k\in \C$ and hence $\underline{\psi}\in\Ker B$, 
which proves that $\Ker A \cap \Ker B\neq \{0\}$. 
\end{proof}

A possible generalisation of Example~\ref{sgnsgn} 
to more complex graphs is given by the following example.
\begin{Example}[A generalisation of Example~\ref{sgnsgn}]\label{gsgnsgn}
Consider a star graph with $\Ie=\emptyset$ 
and subdivision of the external edges $\Ee=\Ee_+ \dot{\cup} \Ee_-$
together with the boundary conditions defined by
\begin{align*}
A= \left[
   \begin{array}{cccccc}
     1 & -1 & 0 &\cdots & 0 & 0 \\
     0 & 1 & -1 &\cdots & 0 & 0  \\
     0 & 0 & 1 &\cdots & 0 & 0  \\
     \vdots &\vdots  & \vdots & & \vdots  & \vdots \\
        0 & 0 & 0 &\cdots & 1 & -1  \\
   0 & 0 & 0 &\cdots & 0 & 0 
   \end{array}
\right], && B= \left[
   \begin{array}{cccccc}
     0 & 0 & 0 &\cdots & 0 & 0 \\
     0 & 0 & 0 &\cdots & 0 & 0  \\
     0 & 0 & 0 &\cdots & 0 & 0  \\
     \vdots &\vdots  & \vdots & & \vdots  & \vdots \\
        0 & 0 & 0 &\cdots & 0 & 0  \\
   1 & \cdots & 1 & \cdots & -1 & -1 
   \end{array}
\right],
\end{align*}  
where in the last row of~$B$ for each edge in $\Ee_+$ stands a $+1$ 
and for each edge in $\Ee_-$ a $-1$. 
To paraphrase, these boundary conditions guarantee that functions are continuous at the central vertex and that the sum of the outward directed derivatives evaluated at the positive incident edges equals the sum of the outward directed derivatives evaluated at the negative incident edges. These boundary conditions define an operator which is self-adjoint in a certain Krein space~\cite[Sec.~4]{IndefiniteQG}. 
The kernel of $A$ is spanned by the vector $w$ with $(w)_{i}=1$, 
for all $i\in \{1, \ldots , |\Ee|\}$. 
Hence, by Proposition \ref{KerAB}, for $\abs{\Ee_+}=\abs{\Ee_-}$ the boundary conditions defined by $A,B$ are irregular and for $\abs{\Ee_+}\neq \abs{\Ee_-}$ they are regular. 
For example, in the case $\abs{\Ee_+}=2$ and $\abs{\Ee_-}=1$ 
one obtains the $k$-independent ``scattering matrix''
\begin{eqnarray*}
\mathfrak{S}(k, A,B)= \begin{bmatrix}
1 & 2 & -2 \\ 2 & 1 & -2 \\ 2 & 2 & -3
\end{bmatrix}.
\end{eqnarray*}
\end{Example}

\begin{remark}
Let the boundary conditions be \emph{local}, \ie $$\Me=\bigoplus_{v\in \verts} \Me_{v},$$ where $\Me_{v}$ are subspaces of $\Ke_{v}^2$, the space of boundary values associated with the endpoints of the edges incident in the vertex $v$, \cf~\cite[Def. 2.6]{KPS2008}. Then it is a direct consequence of Proposition~\ref{KerAB} that the boundary conditions are regular if and only if the boundary conditions at each vertex are regular, and irregular if at least at one vertex the boundary conditions are irregular. 
\end{remark}

\subsection{m-sectorial boundary conditions}
In \cite[Corol.~5]{PKQG1} a further way 
how to parametrise self-adjoint Laplacians on graphs is proposed.
It is given in terms of an orthogonal projection $P$ acting in $\Ke$ 
and a self-adjoint operator $L$ acting in the subspace $\Ker P$. 
For any self-adjoint Laplacian one has $-\Delta(A,B)=-\Delta(A^{\prime},B^{\prime})$ with $A^{\prime}= L+P$ and $B^{\prime}=P^{\perp}$, where using $\Ran B^{\ast} = (\Ker B)^{\perp}$ one sets
\begin{align*}
L= \left(B\mid_{\ran B^*}\right)^{-1}A P^{\perp}
\end{align*}
and $P$ denotes the orthogonal projector onto $\Ker B\subset \Ke$ and $P^{\perp}= \mathds{1}-P$ is the complementary projector. This parametrisation is unique in contrast to that using matrices $A,B$, 
and additionally it is convenient 
when considering forms associated with operators, 
\cf~\cite[Thms.~6 and 9]{PKQG1}.  

Inspired by the self-adjoint situation, 
for a given projector $P$ 
and a not necessarily self-adjoint operator~$L$ 
acting in $\Ker P$, \ie\ $L=P^{\perp}LP^{\perp}$, 
let us consider $-\Delta(\Me)$ with $\Me=\Me(L+P,P^{\perp})$. 
According to \cite[Thm.~3.1]{AccQG},
this operator is m-sectorial 
and associated with the closed sectorial form $\delta_{P,L}$ 
defined by 
\begin{equation}\label{deltaPL}
\begin{aligned}
\delta_{P,L}[\psi]
&=\int_{\Ge} \abs{ \psi^{\prime}}^2 
- \langle L P^{\perp}\underline{\psi},P^{\perp}\underline{\psi}\rangle_{\Ke}, 
\\
\psi \in \Dom(\delta_{P,L}) 
&= \{ \varphi\in \We \mid P\underline{\varphi}=0\},
\end{aligned}
\end{equation}
where~$\We$ denotes the Sobolev space~\eqref{W-space}.

The question when $\Me(A,B)$ with $\dim \Me(A,B)=d$ admits an equivalent parametrisation in terms of a projector $P$ and an operator $L$ acting in $\Ker P$ such that $\Me(A,B)=\Me(L+P,P^{\perp})$ is discussed in \cite{AccQG}. 
It turns out that this is possible 
if and only if $-\Delta(A,B)$ is m-sectorial.
Furthermore, if $\Me(A,B)$ does not admit such a parametrisation, 
then the numerical range of $-\Delta(A,B)$ is entire $\C$, 
see \cite[Lem.~4.3]{AccQG}. 
Therefore, here we call boundary conditions 
defined by~$P$ and~$L$ as described above \textit{m-sectorial}.
Descriptive examples of such boundary conditions 
are $\delta$-interactions with generally complex coupling parameters. 
Note that in order to apply any kind of form methods 
one needs at least m-sectorial boundary conditions. 

\begin{Example}[Complex $\delta$-interaction]\label{Cdelta}
Consider a graph with $\Ie=\emptyset$ and $\abs{\Ee}\geq 2$. Assume that the boundary conditions are defined up to equivalence by 
\begin{align*}
A= \left[
   \begin{array}{cccccc}
     1 & -1 & 0 &\cdots & 0 & 0 \\
     0 & 1 & -1 &\cdots & 0 & 0  \\
     0 & 0 & 1 &\cdots & 0 & 0  \\
     \vdots &\vdots  & \vdots & & \vdots  & \vdots \\
        0 & 0 & 0 &\cdots & 1 & -1  \\
   -\gamma & 0 & 0 &\cdots & 0 & 0
   \end{array}
\right], && B= \left[
   \begin{array}{cccccc}
     0 & 0 & 0 &\cdots & 0 & 0 \\
     0 & 0 & 0 &\cdots & 0 & 0  \\
     0 & 0 & 0 &\cdots & 0 & 0  \\
     \vdots &\vdots  & \vdots & & \vdots  & \vdots \\
        0 & 0 & 0 &\cdots & 0 & 0  \\
   1 & 1 & 1 &\cdots & 1 & 1 
   \end{array}
\right],
\end{align*}  
where $\gamma\in \C$. For real $\gamma$ one can represent the boundary conditions equivalently by the m-sectorial boundary conditions defined by $P=\mathds{1}-P^{\perp}$, where $P^{\perp}$ is the rank one projector onto $(\Ker B)^{\perp}$, and $L= -\frac{\gamma}{\abs{\Ee}} P^{\perp}$, 
\cf~\cite[Sec.~3.2.1]{PKQG1}. 
A direct calculation shows that this carries over to the case of complex coupling parameters~$\gamma$. 
The operator $-\Delta(A,B)$ is associated 
with the quadratic form defined by 
\begin{eqnarray*}
\delta_{P,L}[\psi]=
\int_{\Ge} \abs{\psi^{\prime}}^2   
+ \frac {\gamma}{\abs{\Ee}} \abs{\underline{\psi}}^2, 
& \psi \in \Dom(\delta_{P,L}) 
= \{\psi\in \We \mid P\underline{\psi}=0\}.
\end{eqnarray*}
\end{Example}

It is proved in~\cite{AccQG} that
the boundary conditions of the form $\eqref{AB}$ 
defined by matrices $A,B$ 
can be substituted by an equivalent parametrisation 
using m-sectorial boundary conditions if and only if 
\begin{eqnarray}\label{QAP}
\dim\Me(A,B)=d & \mbox{and} & QAP^{\perp}=0,
\end{eqnarray}
where $Q$ is the orthogonal projector onto $(\Ran B)^{\perp}$ 
and $P^{\perp}$ the orthogonal projector onto $(\Ker B)^{\perp}$. 
This is due to the fact that the evaluation at the vertices 
of the derivatives cancel out in the corresponding quadratic form 
if and only if $QAP^{\perp}=0$.

Note that $L+P+ikP^{\perp}$ has a block diagonal form with respect to the decomposition of $\Ke$ into $\Ran P$ and $\Ran P^{\perp}$. Thus $L+P+ikP^{\perp}$ is invertible for $\abs{k}>\norm{L}$. Consequently, the parametrisation by the transform $\mathfrak{S}(k,L+P,P^{\perp})$ is admissible, whereas the converse is not true: from $A+ik B$ invertible, in general, it does not follow that there are equivalent m-sectorial boundary conditions. 
This is illustrated by the following examples.
\begin{Example}[From Kirchhoff 
to wild aperiodic boundary conditions]\label{ex1}
Let $\Ge=(\verts,\partial,\Ee)$ be a graph consisting of two external edges $\Ee=\{e_1,e_2\}$ and one vertex $\partial(e_1)=\partial(e_2)$. Consider the boundary conditions defined by 
\begin{eqnarray*}
A_{\tau}=\begin{pmatrix} 1 & -e^{i\tau} \\ 0 & 0\end{pmatrix} &\mbox{and} & B_{\tau}=\begin{pmatrix}  0 & 0 \\ 1 & e^{-i\tau} \end{pmatrix},
\end{eqnarray*} 
for $\tau \in [0,\pi/2]$. 
Identifying the graph with the real line 
and the vertex with zero,
the boundary conditions correspond to
$$
  \psi(0+) = e^{i\tau} \psi(0-)
  \quad\mbox{and}\quad
  \psi^{\prime}(0+) = e^{-i\tau} \psi^{\prime}(0-)
  .
$$
This example is included in the study of 
$\mathcal{PT}$-symmetric point interactions in~\cite{Kurasov02}
and was further investigated in \cite{Petr} and \cite{Kuzel05}.

The matrix $A_\tau + ik B_{\tau}$ 
is invertible for $\tau\in[0,\pi/2)$ and $k\neq 0$, hence $A_{\tau},B_{\tau}$ define regular boundary conditions for $\tau\in[0,\pi/2)$. For the Cayley transform
\begin{eqnarray*}
\mathfrak{S}(A_{\tau},B_{\tau},k)= - (A_{\tau}+ ik B_{\tau})^{-1}(A_{\tau}- ik B_{\tau}), & \tau \in [0,\pi/2),
\end{eqnarray*}
an explicit computation yields the $k$-independent matrix
\begin{eqnarray*}
\mathfrak{S}(A_{\tau},B_{\tau},k)= \frac{1}{\cos(\tau)} \begin{bmatrix} i \sin(\tau) & 1 \\ 1 & -i\sin(\tau) \end{bmatrix}.
\end{eqnarray*}
The operator $\mathfrak{S}(A_{\tau},B_{\tau},k)$ is unitary 
(with eigenvalues $+1$ and $-1$) only for $\tau=0$, 
where it defines the so-called standard or Kirchhoff boundary conditions. 

On the other hand, 
for $\tau = \pi/2$ one has $\det(A_{\pi/2}+ik B_{\pi/2})=0$ 
for any $k\in\C$, 
and therefore $A_{\pi/2},B_{\pi/2}$ define irregular boundary conditions. Furthermore one has 
$\sigma_\mathrm{p}(-\Delta(A_{\pi/2}, B_{\pi/2}))=\C\setminus[0,\infty)$, because of \eqref{eq:detZLB} below. This reproduces the results obtained in \cite[Thm.~2]{Kuzel05} and \cite[Sec.~3]{Petr}.

Explicit computation yields
\begin{eqnarray*}
\Ran B_{\tau} = \Span \left\{  \begin{pmatrix} 0 \\ 1  \end{pmatrix}\right\},
& & (\Ran B_{\tau})^{\perp} = \Span \left\{  \begin{pmatrix} 1 \\ 0  \end{pmatrix}\right\}, \\ 
 \Ker B_{\tau} = \Span \left\{  \begin{pmatrix} 1 \\ -e^{i\tau}  \end{pmatrix}\right\}, & &
 (\Ker B_{\tau})^{\perp} = \Span \left\{  \begin{pmatrix} 1 \\ e^{i\tau}  \end{pmatrix}\right\},
\end{eqnarray*}
and therefore, 
with $Q_{\tau}$ being the orthogonal projector 
onto $(\Ran B_{\tau})^{\perp}$ 
and $P_{\tau}^{\perp}$ being the orthogonal projector 
onto $(\Ker B_{\tau})^{\perp}$, 
one has   
\begin{eqnarray*}
Q_{\tau}A_{\tau}P_{\tau}^{\perp}= \frac{1}{2} \begin{pmatrix} 1-e^{2i\tau} & e^{-i\tau}-e^{i\tau} \\ 0 & 0   \end{pmatrix} \neq 0, & \mbox{for } \tau \in (0,\pi/2].
\end{eqnarray*}
The criterion in \eqref{QAP} implies that for $\tau\in(0,\pi/2]$ there is no equivalent representation of $A_{\tau},B_{\tau}$ by m-sectorial boundary conditions. This can be illustrated also by considering the quadratic form defined by the operator $-\Delta(A_{\tau},B_{\tau})$ which  by integrating by parts and inserting the boundary conditions simplifies to become
\begin{equation*}
\langle -\Delta(A_{\tau},B_{\tau})\psi,\psi\rangle = \int_{\Ge} \abs{\psi^{\prime}}^2 + (1-e^{2i\tau}) \psi_2(0)\overline{\psi_2^{\prime}(0)}
\end{equation*}
for every $\psi\in \Dom(-\Delta(A_{\tau},B_{\tau}))$.
In particular, the derivative term cannot be avoided, and the numerical range is entire $\C$ for all $\tau\in(0,\pi/2]$. 

Despite of the wild numerical range properties,
in Section~\ref{Sec.similar}
we shall show 
that for $\tau \in [0,\pi/2)$ the operator $-\Delta(A_{\tau},B_{\tau})$ 
is similar to the self-adjoint Laplacian $-\Delta(A_{0},B_{0})$ , 
and hence its spectrum is $[0,\infty)$. 
Such a similarity relation is of course impossible
for $\tau=\pi/2$ because the spectrum is entire~$\C$. 

The analogous operator on the graph with two internal edges of the same length defined by boundary conditions $A_{\tau},B_{\tau}$ at the central vertex and Dirichlet boundary conditions at the endpoints exhibit similar 
pathological behaviours, see~\cite[Sec.~3]{Petr}.
\end{Example}

\begin{Example}[Intermediate boundary conditions]\label{exDS}
Consider the interval $[0,1]$ 
and the regular boundary conditions defined by 
\begin{eqnarray*}
A=\begin{bmatrix}1 & 0 \\ 0 & 1  \end{bmatrix}  & \mbox{and} & B=\begin{bmatrix} 0 & 0 \\ -1 & 0  \end{bmatrix},
\end{eqnarray*}
\ie \ $\psi(0)=0$ and $\psi(1)-\psi^{\prime}(0)=0$. 
Then $\dim\Me(A,B)=2$ and
\begin{eqnarray*}
QAP^{\perp}=\begin{bmatrix}1 & 0 \\ 0 & 0  \end{bmatrix}\neq 0. 
\end{eqnarray*}
Hence the boundary conditions are not m-sectorial.
One has
\begin{equation*}
\langle -\Delta(A,B)\psi,\psi\rangle 
= \int_{0}^1 \abs{\psi^{\prime}}^2 
-\psi^{\prime}(0)\overline{\psi^{\prime}(1)}
\end{equation*}
for every $\psi\in \Dom(-\Delta(A,B))$.
This example can be found in \cite[p.383]{Birkhoff2} as well as in \cite[Ex.~XIX.6(d)]{DSIII},
where the boundary conditions are called \emph{intermediate}.

Using the methods developed in the forthcoming Section~\ref{sec:spec} one can show that the spectrum of $-\Delta(A,B)$ consists only of eigenvalues of geometric multiplicity one, where each eigenvalue is a solution of 
$\sin(k)=k$, $k\in \C$. 
\end{Example}

\subsection{Adjoint boundary conditions}
Consider for $\Me\subset \Ke^2$ the possibly non-self-adjoint operator $\Delta(\Me)$. Since $\Delta^0\subset \Delta(\Me)\subset \Delta$ it follows for the adjoint operator that $\Delta(\Me)^{\ast}=\Delta(\Me^{\ast})$ for an appropriate subspace $\Me^{\ast}\subset\Ke^2$, and hence also the adjoint operator can be described by means of boundary conditions.

\begin{proposition}\label{thm:adj}
Let $\Me \subset \Ke^2$, then $\Delta(\Me)^{\ast}= \Delta(\Me^{\ast})$ with 
\begin{eqnarray*}
\Me^{\ast}= \left( J\Me\right)^{\perp}, & \mbox{where } J=\begin{bmatrix} 0 & \mathds{1}_{\Ke} \\ - \mathds{1}_{\Ke} & 0 \end{bmatrix}
\end{eqnarray*}
defines a map in $\Ke^2$.
\end{proposition}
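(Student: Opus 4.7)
The plan is to derive a Green-type identity on the graph and play it against the definition of the Hilbert space adjoint. Concretely, for $\psi,\varphi\in\De$ I would perform edgewise integration by parts on
$$
\langle \Delta\psi,\varphi\rangle_{\He}-\langle\psi,\Delta\varphi\rangle_{\He}
=\sum_{j\in\Ee\cup\Ie}\!\int_{I_j}\bigl(\psi_j''\,\overline{\varphi_j}-\psi_j\,\overline{\varphi_j''}\bigr).
$$
On each external edge the contribution at infinity vanishes (since $\psi_e,\varphi_e$ and their derivatives are in $L^2\cap AC$ with $L^2$-derivatives), leaving only the endpoint term at $0$; on each internal edge both endpoints contribute. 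The sign convention built into $\underline{\psi'}$ (a minus at terminal vertices) is exactly what is needed to package these boundary contributions into
$$
\langle \Delta\psi,\varphi\rangle-\langle\psi,\Delta\varphi\rangle
=\langle\underline{\psi},\underline{\varphi'}\rangle_{\Ke}-\langle\underline{\psi'},\underline{\varphi}\rangle_{\Ke}
=-\langle J[\psi],[\varphi]\rangle_{\Ke^{2}},
$$
where the last equality is a direct computation using the block form of $J$.

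Next, since $\Delta^0\subset\Delta(\Me)\subset\Delta=(\Delta^0)^*$, taking adjoints yields $\Delta^0\subset\Delta(\Me)^*\subset\Delta$, so $\Dom(\Delta(\Me)^*)\subset\De$ and on this domain the adjoint acts as the differential operator $\Delta$. Hence $\Delta(\Me)^*$ is itself a restriction of $\Delta$ and is determined by a subspace of $\Ke^2$; the task is only to identify this subspace. A vector $\varphi\in\De$ lies in $\Dom(\Delta(\Me)^*)$ iff $\langle\Delta\psi,\varphi\rangle=\langle\psi,\Delta\varphi\rangle$ for every $\psi\in\Dom(\Delta(\Me))$, which by the Green identity is equivalent to $\langle J[\psi],[\varphi]\rangle_{\Ke^2}=0$ for every $\psi\in\De$ with $[\psi]\in\Me$.

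To conclude I would invoke the auxiliary fact that the boundary-trace map $\psi\mapsto[\psi]$ sends $\De$ \emph{onto} $\Ke^2$: on each internal edge any prescribed pair of values and derivatives at the two endpoints is realised by a cubic Hermite interpolant, and on each external edge by a compactly supported polynomial times a cutoff. Thus the set $\{[\psi]:\psi\in\Dom(\Delta(\Me))\}$ coincides with $\Me$, and the orthogonality condition becomes $[\varphi]\perp J[\psi]$ for all $[\psi]\in\Me$, i.e.\ $[\varphi]\in(J\Me)^{\perp}$. Setting $\Me^*:=(J\Me)^\perp$ gives $\Delta(\Me)^*=\Delta(\Me^*)$.

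I do not expect a substantive obstacle: once the sign bookkeeping in Green's identity is made to match the matrix $J$ from the statement (the one genuinely fiddly step), the rest is standard extension theory for symmetric operators. The only tacit input beyond the material already recalled in Section \ref{Sec.graphs} is the surjectivity of the boundary-trace map, which is elementary.
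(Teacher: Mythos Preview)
Your proposal is correct and follows essentially the same route as the paper: integrate by parts to obtain the boundary form $\langle J[\psi],[\varphi]\rangle_{\Ke^2}$, use $\Delta^0\subset\Delta(\Me)^*\subset\Delta$ to reduce the question to identifying the admissible boundary values, and then invoke surjectivity of the trace map $\De\to\Ke^2$ to turn the vanishing condition into $[\varphi]\in(J\Me)^\perp$. The paper's proof is the same in structure; it merely phrases the last step as ``$J[\psi]\perp\Me$ and $J$ unitary implies $[\psi]\perp J\Me$'' rather than your direct reading, and it asserts rather than constructs the surjectivity you sketch via Hermite interpolants.
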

\begin{proof}
By definition, the adjoint of $\Delta(\Me)$ in the Hilbert space $\He$ 
is the operator defined on
\begin{equation*}
\Dom(\Delta(\Me)^{\ast})
=\left\{\psi \in \He \mid \exists \varphi \in \He , \,
\forall \eta\in \Dom(\Delta(\Me)), \
\langle \psi,\Delta(\Me)\eta\rangle= \langle \varphi,\eta\rangle \right\},
\end{equation*}
by $\Delta(\Me)^{\ast}\psi=\varphi$.
It follows from~\eqref{extensions} 
that $\Delta^0\subset \Delta(\Me)^{\ast}\subset \Delta$, 
and hence $\Delta(\Me)^{\ast}$ is also a realisation of~$\Delta$ 
defined by means of boundary conditions.
Consequently,
$\Delta(\Me)^{\ast}\psi=\Delta\psi$ 
and $\De^0\subset \Dom(\Delta(\Me)^{\ast})\subset \De$. 
It remains to determine the domain of $\Delta(\Me)^{\ast}$
by specifying the boundary conditions. 
An integration by parts yields
\begin{equation}\label{boundaryform}
\langle \psi, \Delta(\Me) \eta \rangle - \langle \Delta\psi, \eta \rangle 
= \langle J [\psi], [\eta] \rangle_{\Ke^2}
\end{equation}
for every $\eta\in \Dom(\Delta(\Me))$ and $\psi \in\De$.
Define   
\begin{eqnarray*}
[\cdot]_{\Me}\colon \Dom(\Delta(\Me)) \rightarrow \Ke^2, & [\eta]_{\Me}=[\eta],
\end{eqnarray*}
and observe that the range of $[\cdot]_{\Me}$ is $\Me$, and that the boundary term \eqref{boundaryform} vanishes identically for all $\psi \in \Dom(\Delta(\Me^{\ast}))$. Hence $\Delta(\Me^{\ast})\subset \Delta(\Me)^{\ast}$. 
Noticing that the boundary term in \eqref{boundaryform} 
vanishes for all $\eta\in \Dom(\Delta(\Me))$ 
if and only if $J[\psi]\perp \Me$ 
and using that $J$ is unitary,
we have $[\psi]\perp J \Me$ for $\psi\in \Dom(\Delta(\Me)^{\ast})$.
Consequently, $\Delta(\Me)^{\ast}\subset\Delta(\Me^{\ast})$,
which proves the claim.
\end{proof}

\begin{corollary}\label{cor:2d}
Let $\Me\in \Ke^2$, then $\dim \Me + \dim \Me^{\ast}=2d$.
\end{corollary}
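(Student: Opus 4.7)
The plan is to deduce the dimension identity directly from the explicit description of $\Me^{\ast}$ provided by Proposition~\ref{thm:adj}, namely $\Me^{\ast} = (J\Me)^{\perp}$, combined with basic linear algebra of orthogonal complements in the finite-dimensional Hilbert space $\Ke^2$.

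First I would record that $\dim \Ke^2 = 2d$. This follows from the construction of $\Ke = \Ke_{\Ee} \oplus \Ke_{\Ie}^{-} \oplus \Ke_{\Ie}^{+}$ with $\Ke_{\Ee} = \C^{\abs{\Ee}}$ and $\Ke_{\Ie}^{(\pm)} = \C^{\abs{\Ie}}$, so that $\dim \Ke = \abs{\Ee} + 2\abs{\Ie} = d$ by~\eqref{d-notation}, whence $\dim \Ke^2 = 2d$.

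Next I would observe that the block matrix $J$ appearing in Proposition~\ref{thm:adj} is unitary on $\Ke^2$; in particular it is a bijection of $\Ke^2$, so the image $J\Me$ is a subspace of $\Ke^2$ with $\dim J\Me = \dim \Me$. Applying the standard identity $\dim V^{\perp} = \dim \Ke^2 - \dim V$ for a subspace $V \subset \Ke^2$, I obtain
\begin{equation*}
\dim \Me^{\ast} = \dim (J\Me)^{\perp} = 2d - \dim J\Me = 2d - \dim \Me,
\end{equation*}
which rearranges to $\dim \Me + \dim \Me^{\ast} = 2d$, as claimed.

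There is no real obstacle here, since Proposition~\ref{thm:adj} has already done the substantive work of identifying $\Me^{\ast}$; the only thing to verify is the unitarity (equivalently, invertibility) of $J$, which is immediate from its $2\times 2$ block form with $\pm \mathds{1}_{\Ke}$ entries.
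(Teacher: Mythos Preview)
Your proof is correct and follows essentially the same approach as the paper's own proof, which simply notes that unitarity of $J$ gives $\dim \Me = \dim J\Me$ and then appeals to the orthogonal complement dimension formula. Your version is just a more explicit rendering of the same argument.
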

\begin{proof}
As $J$ is unitary one has $\dim \Me = \dim J \Me$, from which the claim follows.
\end{proof}

\subsection{Adjoints for regular boundary conditions}
Searching for boundary conditions that define Laplacians 
with non-empty resolvent set, 
one needs by the forthcoming Proposition~\ref{prop:dimM} 
the condition $\dim \Me = d$, and therefore this case is of particular interest. For regular boundary conditions the parametrisation 
in terms of $\mathfrak{S}(k,A,B)$ is convenient for this purpose. 
\begin{proposition}\label{prop:adj}
Let $A,B$ be such that $\dim \Me(A,B)=d$ and $A+ik B$ is invertible for the number $k\neq 0$. Then an equivalent parametrisation of $\Delta(A,B)$ is given by 
\begin{eqnarray*}
A_{\mathfrak{S}}:= - \frac{1}{2} \left(\mathfrak{S}(k,A,B) -\mathds{1}\right) & \mbox{and} & B_{\mathfrak{S}}:=  \frac{1}{2ik} \left(\mathfrak{S}(k,A,B) +\mathds{1}\right)
\end{eqnarray*}
using $\mathfrak{S}(k,A,B)=-(A+ikB)^{-1}(A-ikB)$, and the adjoint operator $\Delta(\Me)^{\ast}=\Delta(\Me^{\ast})$ is defined by $\Me^{\ast}=\Me(A^{\prime},B^{\prime})$, where
\begin{eqnarray*}
A^{\prime}:= - \frac{1}{2} \left(\mathfrak{S}(k,A,B)^{\ast} -\mathds{1}\right) & \mbox{and} & B^{\prime}:=  \frac{1}{-2i\overline{k}} \left(\mathfrak{S}(k,A,B)^{\ast} +\mathds{1}\right).
\end{eqnarray*}
\end{proposition}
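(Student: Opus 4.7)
The plan is to prove the two claims successively. The first part, that $A_{\mathfrak{S}},B_{\mathfrak{S}}$ yield an equivalent parametrisation, would follow immediately from the identities $(A+ikB)A_{\mathfrak{S}}=A$ and $(A+ikB)B_{\mathfrak{S}}=B$ already recorded in Subsection~\ref{subsec:regbc}. Since $A+ikB$ is invertible by assumption, these show that $A_{\mathfrak{S}}$ and $B_{\mathfrak{S}}$ are obtained from $A$ and $B$ by simultaneous left multiplication by the invertible matrix $(A+ikB)^{-1}$, whence by the equivalence criterion noted after Definition~3.1 we have $\mathcal{M}(A,B)=\mathcal{M}(A_{\mathfrak{S}},B_{\mathfrak{S}})$.

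For the adjoint, the starting point is Proposition~\ref{thm:adj}, which gives $\mathcal{M}^\ast=(J\mathcal{M})^\perp$. The key algebraic observation I would exploit is that, setting $S:=\mathfrak{S}(k,A,B)$, both $A_{\mathfrak{S}}$ and $B_{\mathfrak{S}}$ are affine functions of the single operator $S$ and therefore commute; moreover they satisfy the normalisation $A_{\mathfrak{S}}+ikB_{\mathfrak{S}}=\mathds{1}$, as one checks directly from the definitions. Using both facts, I would first establish the explicit parametrisation
\[
\mathcal{M}=\bigl\{(-B_{\mathfrak{S}}u,\,A_{\mathfrak{S}}u):u\in\mathcal{K}\bigr\}.
\]
The inclusion $\supseteq$ is immediate from commutativity, since $A_{\mathfrak{S}}(-B_{\mathfrak{S}}u)+B_{\mathfrak{S}}(A_{\mathfrak{S}}u)=0$. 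The reverse inclusion is a dimension count: the map $u\mapsto(-B_{\mathfrak{S}}u,A_{\mathfrak{S}}u)$ is injective, because $A_{\mathfrak{S}}u=B_{\mathfrak{S}}u=0$ forces $u=(A_{\mathfrak{S}}+ikB_{\mathfrak{S}})u=0$, and hence its range has dimension $\dim\mathcal{K}=d=\dim\mathcal{M}$.

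Applying $J$ then yields $J\mathcal{M}=\{(A_{\mathfrak{S}}u,B_{\mathfrak{S}}u):u\in\mathcal{K}\}$, which is precisely the range of the column operator $\bigl(\begin{smallmatrix}A_{\mathfrak{S}}\\ B_{\mathfrak{S}}\end{smallmatrix}\bigr)\colon\mathcal{K}\to\mathcal{K}^2$. The orthogonal complement of the range of an operator is the kernel of its adjoint, so
\[
\mathcal{M}^\ast=(J\mathcal{M})^\perp=\ker\bigl(A_{\mathfrak{S}}^\ast,B_{\mathfrak{S}}^\ast\bigr)=\mathcal{M}(A_{\mathfrak{S}}^\ast,B_{\mathfrak{S}}^\ast).
\]
A direct computation of the adjoints finally gives $A_{\mathfrak{S}}^\ast=-\tfrac{1}{2}(S^\ast-\mathds{1})=A'$ and $B_{\mathfrak{S}}^\ast=\tfrac{1}{-2i\bar{k}}(S^\ast+\mathds{1})=B'$, matching the stated expressions.

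The main obstacle is really just the insight of parametrising $\mathcal{M}$ as the range of $(-B_{\mathfrak{S}},A_{\mathfrak{S}})^\top$; once this is in hand, everything else reduces to the standard range/kernel duality for a single matrix between $\mathcal{K}$ and $\mathcal{K}^2$ and to the elementary computation of two adjoints. The two small algebraic facts driving the argument are the commutativity of $A_{\mathfrak{S}}$ and $B_{\mathfrak{S}}$ (as polynomials in $S$) and the normalisation $A_{\mathfrak{S}}+ikB_{\mathfrak{S}}=\mathds{1}$, neither of which is hard to spot.
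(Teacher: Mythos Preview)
Your proof is correct and takes a genuinely different route from the paper's. The paper also reduces to Proposition~\ref{thm:adj}, but then argues indirectly: it first proves separately that $\dim\mathcal{M}(A',B')=d$ (by contradiction, using that $(A')^\ast=A_{\mathfrak{S}}$, $(B')^\ast=B_{\mathfrak{S}}$ and regularity), and then shows that the two $d$-dimensional subspaces
\[
(J\mathcal{M})^\perp=\Ran\begin{bmatrix}B_{\mathfrak{S}}^\ast\\-A_{\mathfrak{S}}^\ast\end{bmatrix}
\quad\text{and}\quad
\mathcal{M}(A',B')^\perp=\Ran\begin{bmatrix}(A')^\ast\\(B')^\ast\end{bmatrix}
\]
are mutually orthogonal via an explicit inner-product computation in which two $\mathfrak{S}^2-\mathds{1}$ terms cancel. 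Your approach replaces this computation by the single structural observation that $A_{\mathfrak{S}}$ and $B_{\mathfrak{S}}$ commute and satisfy $A_{\mathfrak{S}}+ikB_{\mathfrak{S}}=\mathds{1}$, which lets you write $\mathcal{M}$ itself as the range of a column map and then invoke the standard range--kernel duality once. This is more elementary and also makes the dimension count for $\mathcal{M}(A',B')$ automatic, whereas the paper's orthogonality calculation, though short, is less transparent about \emph{why} the identity holds.
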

\begin{proof}
The fact that $A_{\mathfrak{S}}, B_{\mathfrak{S}}$ 
define equivalent boundary conditions has been discussed already 
in Subsection~\ref{subsec:regbc}.

Let us first prove that $\dim \Me(A^{\prime},B^{\prime})=d$ 
for the operators $A^{\prime},B^{\prime}$ given in the proposition. 
Assume that $\dim \Me(A^{\prime},B^{\prime})>d$. Then 
\begin{eqnarray*}
\Me(A^{\prime},B^{\prime})^{\perp}=\Ran\begin{bmatrix} (A^{\prime})^{\ast} \\ (B^{\prime})^{\ast}  \end{bmatrix} & \mbox{and} & \dim \Me(A^{\prime},B^{\prime})^{\perp} <d.
\end{eqnarray*}
Therefore, 
\begin{eqnarray*}
\Ker \begin{bmatrix} (A^{\prime})^{\ast} \\ (B^{\prime})^{\ast}  \end{bmatrix}=\Ker (A^{\prime})^{\ast} \cap \Ker (B^{\prime})^{\ast}\neq \{0\},
\end{eqnarray*}
\cf~\cite[Ass.~2.1 and below]{KPS2008}. 
Note that 
\begin{eqnarray*}
A^{\prime}= A_{\mathfrak{S}}^{\ast} & \mbox{and} & B^{\prime}= B_{\mathfrak{S}}^{\ast},
\end{eqnarray*}
and hence $\Ker A_{\mathfrak{S}} \cap \Ker B_{\mathfrak{S}}\neq \{0\}$, which implies $\det(A_{\mathfrak{S}}+ik B_{\mathfrak{S}})= 0$ for all $k\in\C$. This is a contradiction to the assumption that $A,B$ define regular boundary conditions. Hence $\dim \Me(A^{\prime},B^{\prime})=d$. 

Now one shows that $\Me^{\ast}=\Me(A^{\prime},B^{\prime})$, where $\Me^{\ast}$ is given in Proposition~\ref{thm:adj}. By the equivalence of boundary conditions, one has $\Me(A,B)=\Me(A_{\mathfrak{S}}, B_{\mathfrak{S}})$. Note that $J\Me(A_{\mathfrak{S}}, B_{\mathfrak{S}})= \Me(B_{\mathfrak{S}}, -A_{\mathfrak{S}})$. Hence 
\begin{eqnarray*}
(J\Me(A_{\mathfrak{S}}, B_{\mathfrak{S}}))^{\perp}=\Ran \begin{bmatrix} B_{\mathfrak{S}}^{\ast} \\ -A_{\mathfrak{S}}^{\ast} \end{bmatrix} &\mbox{and} & \Me(A^{\prime},B^{\prime})^{\perp}= \Ran \begin{bmatrix}(A^{\prime})^{\ast} \\ (B^{\prime})^{\ast}  \end{bmatrix}.
\end{eqnarray*}
Observe that
\begin{align*}
\left\langle  \begin{bmatrix} B_{\mathfrak{S}}^{\ast}\underline{\psi} \\ -A_{\mathfrak{S}}^{\ast} \underline{\psi} \end{bmatrix},  \begin{bmatrix}(A^{\prime})^{\ast}\underline{\varphi} \\ (B^{\prime})^{\ast} \underline{\varphi} \end{bmatrix} \right\rangle_{\Ke^2}=& \
-\frac{1}{4ik}\left\langle \underline{\psi}, \left(\mathfrak{S}^2 - \mathds{1}\right)\underline{\varphi} \right\rangle_{\Ke} + \frac{1}{4ik}\left\langle \psi, \left(\mathfrak{S}^2 - \mathds{1}\right)\varphi \right\rangle_{\Ke} \\
=&\ 0 
\end{align*}
for all $\underline{\psi},\underline{\varphi}\in \Ke$, where $\mathfrak{S}=\mathfrak{S}(k,A,B)$. Hence $(J\Me(A_{\mathfrak{S}}, B_{\mathfrak{S}}))^{\perp} \perp \Me(A^{\prime},B^{\prime})^{\perp}$, and since both spaces have dimension equal to $d$ one obtains $(J\Me(A_{\mathfrak{S}}, B_{\mathfrak{S}}))^{\perp} = \Me(A^{\prime},B^{\prime})$. Applying Proposition \ref{thm:adj} yields the claim.
\end{proof}
As a consequence, one obtains for m-sectorial operators the following
\begin{corollary}\label{Last}
Let $P$ be an orthogonal projector in $\Ke$, $P^{\perp}=\mathds{1}-P$ and $L$ and operator with $L =P^{\perp}LP^{\perp}$, then 
\begin{equation*}
\Delta(P+L,P^{\perp})^{\ast}= \Delta(P+L^{\ast},P^{\perp}).
\end{equation*}
\end{corollary}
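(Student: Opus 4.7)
The plan is to apply Proposition \ref{thm:adj} directly: set $\Me := \Me(P+L, P^\perp)$, compute $(J\Me)^\perp$, and identify it with $\Me(P+L^\ast, P^\perp)$. The block structure induced by $L = P^\perp L P^\perp$ (which also gives $L^\ast = P^\perp L^\ast P^\perp$) makes the calculation essentially mechanical.

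First I would make $\Me$ explicit. The relation $(P+L)\chi_1 + P^\perp \chi_2 = 0$, projected onto $\Ran P$ and $\Ran P^\perp$, is equivalent to the two independent conditions $P\chi_1 = 0$ and $L\chi_1 + P^\perp \chi_2 = 0$. Thus $\chi_1$ ranges over $\Ran P^\perp$, the component $P\chi_2$ is free, and $P^\perp\chi_2 = -L\chi_1$; in particular $\dim\Me = d$, since the defining map $\Ke^2\to\Ke$ is surjective (set $\chi_1 = P\eta$, $\chi_2 = P^\perp\eta$ to hit an arbitrary $\eta\in\Ke$). Hence $J\Me$ consists of all pairs $(\chi_2,-\chi_1)$ of this form.

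Next, $(\eta_1,\eta_2)\in (J\Me)^\perp$ means $\langle \eta_1,\chi_2\rangle-\langle \eta_2,\chi_1\rangle = 0$ for every such pair. Substituting $\chi_2 = P\chi_2 - L\chi_1$ and treating the free pieces $P\chi_2\in\Ran P$ and $\chi_1\in\Ran P^\perp$ independently yields $P\eta_1 = 0$ (from $\chi_1 = 0$) and $P^\perp L^\ast\eta_1 + P^\perp\eta_2 = 0$ (from $P\chi_2 = 0$); using $L^\ast = P^\perp L^\ast P^\perp$, the second condition reads $L^\ast\eta_1 + P^\perp\eta_2 = 0$. Adding the two conditions gives $(P+L^\ast)\eta_1 + P^\perp\eta_2 = 0$, so $(J\Me)^\perp\subseteq \Me(P+L^\ast,P^\perp)$. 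The same surjectivity argument applied with $L^\ast$ in place of $L$ shows $\dim\Me(P+L^\ast,P^\perp) = d$, while Corollary \ref{cor:2d} gives $\dim(J\Me)^\perp = 2d - \dim\Me = d$; hence equality holds. Proposition \ref{thm:adj} then delivers $\Delta(P+L,P^\perp)^\ast = \Delta((J\Me)^\perp) = \Delta(P+L^\ast,P^\perp)$.

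The only genuine obstacle is the $P$/$P^\perp$ bookkeeping, in particular making sure that the two free parameters $P\chi_2$ and $\chi_1$ decouple so that one really extracts two independent orthogonality conditions; once the block decomposition is set up carefully, no further input beyond Proposition \ref{thm:adj} and Corollary \ref{cor:2d} is needed. One could alternatively route the argument through Proposition \ref{prop:adj} by explicitly computing $\mathfrak{S}(k,P+L,P^\perp)$ in block form (identity on $\Ran P$, Cayley-type expression in $L$ on $\Ran P^\perp$) and exhibiting an invertible $C$ relating the resulting $(A',B')$ to $(P+L^\ast,P^\perp)$, but that involves strictly more calculation.
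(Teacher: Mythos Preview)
Your argument is correct. It differs from the paper's route: the paper states the corollary as an immediate consequence of Proposition~\ref{prop:adj}, i.e.\ via the Cayley transform $\mathfrak{S}(k,P+L,P^{\perp})$, whose block-diagonal form ($-\mathds{1}$ on $\Ran P$, $-(L+ik)^{-1}(L-ik)$ on $\Ran P^{\perp}$) makes the passage from $\mathfrak{S}$ to $\mathfrak{S}^{\ast}$ transparent and yields boundary conditions equivalent to $(P+L^{\ast},P^{\perp})$. You instead go back to the more primitive Proposition~\ref{thm:adj} and compute $(J\Me)^{\perp}$ by hand. Your approach is self-contained and avoids the Cayley-transform machinery entirely (in particular you never need to check regularity or invoke $\mathfrak{S}$), at the cost of the explicit block bookkeeping you flag; the paper's route reuses the structure already set up in Proposition~\ref{prop:adj} and is shorter once that proposition is in hand, which is presumably why no proof is written out. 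You correctly identify this alternative in your final paragraph.
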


\subsection{Approximation of boundary conditions}\label{Sec.approx}
One can ask which boundary conditions are ``close to each other'', 
and for answering this question properly 
one has to decide in which topology it is raised. 
Here, boundary conditions with the same dimension are compared to each other. 
Let us thus consider the set of subspaces 
$\Me\subset \Ke^2$ with $\dim \Me=n$; 
this is the Grassmann manifold $\Gr(2d,n)$. 
For a subspace $\Me\subset \Ke^2$ denote by $P_{\Me}$ 
the orthogonal projector in $\Ke^2$ to $\Me$. 
A metric on $\Gr(2d,n)$ is defined by
\begin{eqnarray*}
d_{n}(\Me_1,\Me_2):=\norm{P_{\Me_1}-P_{\Me_2}}.
\end{eqnarray*}
\begin{lemma}\label{prop:approx}
Let $\Me\subset\Ke^2$ with $\dim \Me=n$, and let $\Me_l\subset \Ke^2$, $l\in \N$, be a sequence of $n$-dimensional subspaces with 
$$\lim_{l\to\infty} d_n(\Me_l,\Me)=0.$$
Then the sequence of operators $-\Delta(\Me_l)$ converges in the strong graph limit to $-\Delta(\Me)$. 
\end{lemma}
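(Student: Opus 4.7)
The plan is to verify the defining property of strong graph convergence directly: for every $\psi\in\Dom(-\Delta(\Me))$ I will construct a sequence $\psi_l\in\Dom(-\Delta(\Me_l))$ with $\psi_l\to\psi$ and $-\Delta\psi_l\to-\Delta\psi$ in $\He$. Since all the operators $-\Delta(\Me_l)$ and $-\Delta(\Me)$ act as the same differential expression~$\Delta$ on their domains, it suffices to arrange $\psi_l\to\psi$ in the Hilbert space~$\De$, whose norm controls $\|\psi\|_\He$ and $\|\psi''\|_\He$ simultaneously.

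The key device is the orthogonal projector $P_{\Me_l}$ already appearing in the definition of the metric $d_n$. Put $\xi_l:=P_{\Me_l}[\psi]\in\Me_l$. Since $[\psi]\in\Me$ gives $P_{\Me}[\psi]=[\psi]$, the hypothesis yields
\[
\|\xi_l-[\psi]\|_{\Ke^2}
=\|(P_{\Me_l}-P_{\Me})[\psi]\|_{\Ke^2}
\le d_n(\Me_l,\Me)\,\|[\psi]\|_{\Ke^2}
\longrightarrow 0.
\]
The remaining task is to realise the boundary correction $\xi_l-[\psi]\in\Ke^2$ as the trace of a function $\chi_l\in\De$ whose $\De$-norm is controlled linearly by $\|\xi_l-[\psi]\|_{\Ke^2}$. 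Once this is available, $\psi_l:=\psi+\chi_l$ has $[\psi_l]=\xi_l\in\Me_l$, so $\psi_l\in\Dom(-\Delta(\Me_l))$, and $\|\psi_l-\psi\|_{\De}\to 0$ delivers the desired convergence in~$\He$ together with $-\Delta\psi_l\to-\Delta\psi$ in~$\He$.

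The substantive step is therefore to construct a bounded linear lifting $\Phi\colon\Ke^2\to\De$ with $[\Phi(\xi)]=\xi$; one then sets $\chi_l:=\Phi(\xi_l-[\psi])$. This is done edge by edge. On each internal edge $i\in\Ie$, the four prescribed numbers $\psi_i(0),\psi_i(a_i),\psi_i'(0),-\psi_i'(a_i)$ are matched by the unique cubic polynomial on $[0,a_i]$ that interpolates the Hermite data, which gives a bounded linear map into~$\De_i$. On each external edge $e\in\Ee$, the two prescribed numbers $u=\psi_e(0)$ and $v=\psi_e'(0)$ are realised by the explicit function
\[
\psi_e(x)=\bigl(u+(u+v)x\bigr)e^{-x},
\]
which lies in $\De_e$ with $\De_e$-norm bounded linearly by $|u|+|v|$. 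Assembling the component maps gives $\Phi$ with the required continuity.

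I expect the only genuine obstacle to be this bounded-lifting step; once it is in place, the rest of the argument reduces to the operator-norm convergence $P_{\Me_l}\to P_{\Me}$, which is precisely what $d_n(\Me_l,\Me)\to 0$ provides. If the definition of strong graph limit additionally requires the converse (upper semicontinuity of the graphs), it is obtained by the following standard passage: from $\psi_l\to\psi$ in $\He$ with $\Delta\psi_l\to\phi$ in $\He$ one gets $\psi_l\to\psi$ in $\De$ by interpolation, hence $[\psi_l]\to[\psi]$ by continuity of the trace, and the identity $[\psi_l]=P_{\Me_l}[\psi_l]$ combined with $P_{\Me_l}\to P_{\Me}$ in norm forces $[\psi]\in\Me$.
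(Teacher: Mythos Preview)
Your proof is correct and follows essentially the same route as the paper: both arguments hinge on a bounded right inverse to the trace map $[\cdot]\colon\De\to\Ke^2$, then use $P_{\Me_l}\to P_{\Me}$ to produce (respectively detect) boundary data in $\Me_l$ close to those in $\Me$. The only cosmetic difference is that the paper phrases the lifting abstractly via the direct-sum decomposition $\De=\De^0\,\dot{+}\,\Ke^2$ (identifying $\Ke^2$ with $\De/\De^0$), whereas you build the lifting explicitly with Hermite cubics on internal edges and $(u+(u+v)x)e^{-x}$ on external edges; and the paper appeals to closedness of $\Delta$ plus the equivalence of the graph norm with the $\De$-norm where you write ``by interpolation''. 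Your explicit choice $\xi_l=P_{\Me_l}[\psi]$ is exactly what the paper leaves implicit when it says ``by assumption there is a sequence $[\psi_l]\to[\psi]$''.
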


\begin{proof}
Denote by $\Gamma_{\Me}\subset \He^2$ the graph of the operator $-\Delta(\Me)$ for arbitrary $\Me\subset\Ke^2$. 
In order to prove convergence of $-\Delta(\Me_l)$ to $-\Delta(\Me)$ in the strong graph limit 
(see \cite[Sec.~VIII.7, p.~293]{RS1} for the definition), 
one has to prove two items:
\begin{enumerate}
\item 
For all $(\psi_l, -\Delta(\Me_l) \psi_l)\in \He^2$, 
with $\psi_l\in \Delta(\Me_l)$ such that 
$\psi_l \rightarrow \xi$ and $-\Delta(\Me_l)\psi_l \rightarrow \eta$, 
it follows that  $(\xi,\eta) \in \Gamma_{\Me}$. 
This means $\xi \in \Dom(-\Delta(\Me))$ and $\eta=-\Delta(\Me)\xi$. 
\item 
For all $ (\psi, -\Delta(\Me) \psi) \in \Gamma_{\Me}$ there exists a sequence $\{\psi_l\}_{l\in \N}$ such that  $(\psi_l,-\Delta(\Me_l)\psi_l)\in \Gamma_{\Me_l} $ and $\psi_l \rightarrow \psi,$  $-\Delta(\Me_l)\psi_l \rightarrow -\Delta(\Me)\psi$.
\end{enumerate}

Note that $-\Delta(\Me)$ is an extension of finite rank of $-\Delta^0$ for any $\Me\subset \Ke^2$. In particular, $\De^0\subset \De$ is a closed subspace and the quotient space $\De/\De^0$ can be identified with the space of boundary values $\Ke^2$. Hence one has 
\begin{eqnarray}\label{directsum}
\De = \De^0 \dot{+} \Ke^2,
\end{eqnarray}
where $\dot{+}$ denotes the direct sum.
Let $\psi_l\to \xi$ and $-\Delta(\Me_l)\psi_l \to \eta$. Since $-\Delta$ is closed and $-\Delta(\Me_l)\psi_l=-\Delta \psi_l$ it follows that $\eta = -\xi^{\prime\prime}$. By \eqref{directsum} one has a decomposition 
\begin{eqnarray*}
\xi = \xi^0 \dot{+} [\xi] &\mbox{and} & \psi_l = \psi_l^0 \dot{+} [\psi_l] \qquad \mbox{with } \xi^0,\psi_l^0\in \De^0.
\end{eqnarray*}
By assumption one has $\psi_l \to \xi$ in the graph norm which is equivalent to the Sobolev norm in the Hilbert space $\De$. Hence, $[\psi_l]\to [\xi]$ and therefore $\xi\in \Dom(-\Delta(\Me))$ which proves (1).  

Let $\psi\in \Dom(-\Delta(\Me))$. Then by \eqref{directsum} one has the decomposition $\psi = \psi^0 \dot{+} [\psi]$. By assumption there is a sequence $[\psi_l]\to [\psi]$, and 
\begin{eqnarray*}
\psi_l = \psi^0 \dot{+} [\psi_l]\in \Dom(-\Delta(\Me_l)) & \mbox{such that } \psi_l \to \psi & \mbox{and } \psi_l^{\prime\prime} \to \psi^{\prime\prime}.
\end{eqnarray*}
This proves (2) and finishes the proof.
\end{proof}
\begin{theorem}\label{thm:conv}
Let $A,B$ define irregular boundary conditions. Then there is a sequence of regular boundary conditions $A_l,B_l$, $l\in \N$, such that $-\Delta(A_l,B_l)$ converges in the strong graph limit to $-\Delta(A,B)$.
\end{theorem}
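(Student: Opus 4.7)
The plan is to combine Lemma~\ref{prop:approx} with the observation that regularity of the boundary conditions is a generic property within the Grassmann manifold $\Gr(2d,d)$. Concretely, by Lemma~\ref{prop:approx} it suffices to exhibit a sequence of $d$-dimensional subspaces $\Me(A_l,B_l)\subset\Ke^2$, each corresponding to regular boundary conditions, such that $d_d\bigl(\Me(A_l,B_l),\Me(A,B)\bigr)\to 0$; the strong graph convergence of the associated Laplacians then follows automatically.

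For the construction I would simply set
\begin{equation*}
A_l := A + \tfrac{1}{l}\,\mathds{1}_\Ke, \qquad B_l := B.
\end{equation*}
Since $A$ has only finitely many eigenvalues, $A_l$ is invertible for all sufficiently large $l$. Hence the map $(A_l,B_l)\colon\Ke^2\to\Ke$ is surjective, giving $\dim\Me(A_l,B_l)=2d-d=d$, and the factorisation $A_l + ikB_l = A_l\bigl(\mathds{1}_\Ke + ik A_l^{-1}B_l\bigr)$ shows that $A_l+ikB_l$ is invertible for every $k\in\C$ with $\abs{k}$ sufficiently small and nonzero. Thus $A_l,B_l$ define regular boundary conditions in the sense of Definition~\ref{Def.regular}.

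It remains to verify the Grassmann convergence. Writing $C:=(A,B)$ and $C_l:=(A_l,B_l)$ as surjective linear maps $\Ke^2\to\Ke$, the Gram operators $CC^*$ and $C_lC_l^*$ are invertible, so the orthogonal projector onto $\Me=\Ker C$ has the explicit form
\begin{equation*}
P_{\Me} = \mathds{1}_{\Ke^2} - C^*(CC^*)^{-1}C,
\end{equation*}
and analogously for $\Me(A_l,B_l)$. Since $C_l\to C$ in operator norm and matrix inversion is continuous on the set of invertible operators, one obtains $P_{\Me(A_l,B_l)}\to P_{\Me(A,B)}$ in norm, which is precisely $d_d(\Me(A_l,B_l),\Me(A,B))\to 0$. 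An application of Lemma~\ref{prop:approx} then yields strong graph convergence of $-\Delta(A_l,B_l)$ to $-\Delta(A,B)$.

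There is no serious obstacle: the only delicate point is finding a perturbation that restores both the correct dimension of $\Me$ and the regularity condition while remaining close in the Grassmann topology. The choice $A_l = A + \tfrac{1}{l}\mathds{1}_\Ke$ is convenient because invertibility of $A_l$ trivially supplies regularity (by Proposition~\ref{KerAB}, irregularity forces $\Ker A\cap\Ker B\neq\{0\}$, and in particular $A$ is non-invertible, so the shift genuinely perturbs the situation), and because preserving the rank of $(A,B)$ ensures continuity of the projector formula above.
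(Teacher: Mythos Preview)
Your proof is correct and follows essentially the same strategy as the paper's: perturb the boundary matrices to force regularity, verify Grassmann convergence via the explicit projector formula $P_{\Me^\perp}=C^*(CC^*)^{-1}C$ (which is exactly \eqref{PM} in the paper, quoted from \cite[Lem.~3.2]{KPS2008}), and then invoke Lemma~\ref{prop:approx}. The only cosmetic difference is that the paper perturbs $B$ by $B_\epsilon:=B+\epsilon P$ with $P$ the projector onto $\Ker B$ (making $B_\epsilon$ invertible), whereas you perturb $A$ by $A_l:=A+\tfrac{1}{l}\mathds{1}$ (making $A_l$ invertible for large $l$); either choice yields $\Ker A_l\cap\Ker B_l=\{0\}$ and hence regularity by Proposition~\ref{KerAB}, and your shift by a scalar multiple of the identity is arguably the more economical option.
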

\begin{proof}
For $\Me(A,B)\subset \Ke^2$ with $\dim\Me(A,B)=d$ one has 
by \cite[Lem.~3.2]{KPS2008}
\begin{eqnarray}\label{PM}
P_{\Me(A,B)^{\perp}}= \begin{pmatrix} A^{\ast} \\ B^{\ast} \end{pmatrix} (AA^{\ast} + BB^{\ast} )^{-1}  (A, \, B).
\end{eqnarray}
Denote by $P$ the orthogonal projector in $\Ke$ onto $\Ker B$. Then $A$ and $B_{\epsilon}$ with $$B_{\epsilon}:=B+ \epsilon P$$ 
define regular boundary conditions for $\epsilon>0$ because $B_{\epsilon}$ is invertible. Note that by \cite[Lem.~3.2]{KPS2008} 
$AA^{\ast} + B_{\epsilon}B_{\epsilon}^{\ast}$, $\epsilon\geq 0$, is invertible since $\dim\Me(A,B_{\epsilon})=d$, and therefore $$\lim_{\epsilon\to 0}(AA^{\ast} + B_{\epsilon}B_{\epsilon}^{\ast})^{-1}= (AA^{\ast} + BB^{\ast})^{-1}.$$
Using that $P_{\Me(A,B_{\epsilon})}= \mathds{1}-P_{\Me(A,B)^{\perp}}$, $\epsilon\geq 0$, and \eqref{PM} one can prove then that 
\begin{eqnarray*}
\lim_{\epsilon\to 0}\norm{P_{\Me(A,B)}-P_{\Me(A,B_{\epsilon})}}=0, 
\end{eqnarray*}
where the norm is the operator norm, but since in the finite dimensional Hilbert space $\Ke^2$ all norms are equivalent it is also sufficient to prove component wise convergence. Applying Lemma~\ref{prop:approx} to $-\Delta(A,B)$ and to $-\Delta(A,B_{1/l})$, $l\in \N$, proves the claim.
\end{proof}

One has to emphasise that convergence in the strong graph limit does not imply convergence of spectra. Consider for example the operators $-\Delta(A_{\tau},B_{\tau})$ defined in Example \ref{ex1}. These operators converge for $\tau \to \pi/2$ in the strong graph limit to the operator 
$-\Delta(A_{\pi/2},B_{\pi/2})$, which has empty resolvent set, 
whereas $-\Delta(A_{\tau},B_{\tau})$ for $\tau\neq \pi/2$ 
are similar to the self-adjoint Laplacian on the real line
with spectrum $[0,\infty)$. 
The strong graph convergence for this special example
was studied previously in \cite[Prop.~2.7]{PetrPhD}. 

The approximation of regular boundary conditions 
in the norm resolvent sense will be established
in Subsection~\ref{Sec.approx2}.

\subsection{$\mathcal{J}$-self-adjointness}
Let 
$
\mathcal{J}\colon \He \rightarrow \He
$
be an anti-linear bounded operator with bounded inverse. The operator $\Delta(\Me)$ is called 
\emph{$\mathcal{J}$-self-adjoint} if 
$$\Delta(\Me)^{\ast}= \mathcal{J}^{-1}\Delta(\Me)\mathcal{J}.$$ 
If $\mathcal{J}$ is in addition involutive and isometric, 
then our definition agrees with the standard notion from \cite[Sec.~III.5]{Evans}.
The usage of the $\mathcal{J}$-self-adjointness 
was suggested in~\cite{BK} as a generalised concept 
of $\mathcal{PT}$-symmetry. It was also pointed out there
that the residual spectrum of $\mathcal{J}$-self-adjoint operators is empty.
The latter can be easily seen as follows, also for our broader definition. 
The equality
$
  \Delta(\Me)^{\ast}-\lambda
  = \mathcal{J}^{-1}\left(\Delta(\Me)-\overline{\lambda}\right)\mathcal{J}
$ 
implies the symmetry relation
$
\overline{\sigma_\mathrm{p}(\Delta(\Me))}
=\sigma_\mathrm{p}(\Delta(\Me)^{\ast})
$. 
Using the general characterisation of the residual spectrum
\begin{eqnarray}\label{sigmar}
\sigma_\mathrm{r}(-\Delta(\Me))
=\{\lambda\notin \sigma_\mathrm{p}(-\Delta(\Me))
\mid \overline{\lambda} \in \sigma_\mathrm{p}(-\Delta(\Me)^{\ast}) 
\},
\end{eqnarray} 
it thus follows that the residual spectrum of $\Delta(\Me)$ is empty.  

Let us further assume that $\mathcal{J}$ commutes with the maximal operator $\Delta$, then $\mathcal{J}$ induces by
\begin{eqnarray*}
\underline{\mathcal{J}}\colon \Ke \rightarrow \Ke, & \underline{\psi} \mapsto \underline{\mathcal{J}\psi}
\end{eqnarray*}
an anti-linear operator in $\Ke$, because the map $\psi\mapsto \underline{\psi}$ is surjective as a map from $\We$ to $\Ke$.

\begin{proposition}\label{prop:THT}
Let $\dim \Me= d$ and $\Me=\Me(A,B)$ be such that $A-\kappa B$ is invertible for a $\kappa> 0$. Then $\Delta(\Me)^{\ast}$ is $\mathcal{J}$-self-adjoint if and only if $\mathfrak{S}(i\kappa,A,B)$ is $\underline{\mathcal{J}}$-self-adjoint, \ie,
\begin{eqnarray*}
\mathfrak{S}(i\kappa,A,B)^{\ast}=\underline{\mathcal{J}}^{-1}\,\mathfrak{S}(i\kappa,A,B)\underline{\mathcal{J}}.
\end{eqnarray*}
\end{proposition}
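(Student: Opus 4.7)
The plan is to reduce the operator-level $\mathcal{J}$-self-adjointness to an identity between two $d$-dimensional subspaces of $\Ke^{2}$, and then to convert that subspace identity into the asserted matrix identity by means of the explicit Cayley parametrization of the adjoint furnished by Proposition~\ref{prop:adj}. The hypothesis that $A-\kappa B$ is invertible is precisely what is needed to choose $k=i\kappa$ in~\eqref{S}, so that $\mathfrak{S}:=\mathfrak{S}(i\kappa,A,B)$ is well defined.

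First, I would translate $\Delta(\Me)^{\ast}=\mathcal{J}^{-1}\Delta(\Me)\mathcal{J}$ into a statement about $\Me$. Since $\mathcal{J}$ commutes with the maximal operator $\Delta$ and respects both components of $[\psi]=\underline{\psi}\oplus\underline{\psi}^{\prime}$, the induced action on $\Ke^{2}$ is the diagonal anti-linear bijection $\hat{\mathcal{J}}:=\underline{\mathcal{J}}\oplus\underline{\mathcal{J}}$. A direct computation of the domain yields $\mathcal{J}^{-1}\Delta(\Me)\mathcal{J}=\Delta(\hat{\mathcal{J}}^{-1}\Me)$, so by Proposition~\ref{thm:adj} the $\mathcal{J}$-self-adjointness is equivalent to the subspace identity $\Me^{\ast}=\hat{\mathcal{J}}^{-1}\Me$. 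Next, I would apply Proposition~\ref{prop:adj} with $k=i\kappa$ (noting $-2i\overline{k}=-2\kappa$) to obtain $\Me=\Me(A_{\mathfrak{S}},B_{\mathfrak{S}})$ and $\Me^{\ast}=\Me(A',B')$ with
\[
A_{\mathfrak{S}}=-\tfrac{1}{2}(\mathfrak{S}-\mathds{1}),\quad B_{\mathfrak{S}}=-\tfrac{1}{2\kappa}(\mathfrak{S}+\mathds{1}),\quad A'=-\tfrac{1}{2}(\mathfrak{S}^{\ast}-\mathds{1}),\quad B'=-\tfrac{1}{2\kappa}(\mathfrak{S}^{\ast}+\mathds{1}).
\]
Solving $A_{\mathfrak{S}}\chi_{1}+B_{\mathfrak{S}}\chi_{2}=0$ explicitly gives the Cayley-type parametrization
\[
\Me=\bigl\{\bigl(\tfrac{1}{2}(\mathds{1}+\mathfrak{S})\eta,\tfrac{\kappa}{2}(\mathds{1}-\mathfrak{S})\eta\bigr):\eta\in\Ke\bigr\},
\]
and analogously for $\Me^{\ast}$ with $\mathfrak{S}^{\ast}$ in place of $\mathfrak{S}$.

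Finally, I would substitute these parametrizations into $\Me^{\ast}=\hat{\mathcal{J}}^{-1}\Me$. Performing the antilinear change of variables $\eta=\underline{\mathcal{J}}\xi$ converts $\hat{\mathcal{J}}^{-1}\Me$ into the image of the map
\[
\xi\mapsto\bigl(\tfrac{1}{2}\underline{\mathcal{J}}^{-1}(\mathds{1}+\mathfrak{S})\underline{\mathcal{J}}\xi,\tfrac{\kappa}{2}\underline{\mathcal{J}}^{-1}(\mathds{1}-\mathfrak{S})\underline{\mathcal{J}}\xi\bigr),
\]
which is linear as a composition antilinear $\circ$ linear $\circ$ antilinear. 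Comparing it with the $\mathfrak{S}^{\ast}$-parametrization of $\Me^{\ast}$, both being injective linear maps $\Ke\to\Me^{\ast}$, they differ by a linear bijection $S\in\End(\Ke)$, \ie\ $\underline{\mathcal{J}}^{-1}(\mathds{1}\pm\mathfrak{S})\underline{\mathcal{J}}=(\mathds{1}\pm\mathfrak{S}^{\ast})S$. Adding the two identities (and using $\underline{\mathcal{J}}^{-1}\underline{\mathcal{J}}=\mathds{1}$) forces $S=\mathds{1}$, while subtracting them yields $\underline{\mathcal{J}}^{-1}\mathfrak{S}\underline{\mathcal{J}}=\mathfrak{S}^{\ast}$, exactly the $\underline{\mathcal{J}}$-self-adjointness of $\mathfrak{S}$; all steps being reversible gives the converse. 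The main subtlety I expect is careful bookkeeping of the antilinearity of $\underline{\mathcal{J}}$ — only the composite $\underline{\mathcal{J}}^{-1}\mathfrak{S}\underline{\mathcal{J}}$ is a linear operator on $\Ke$, which is what legitimates the comparison with $\mathfrak{S}^{\ast}$ — together with the more conceptual point that the induced action of $\mathcal{J}$ on $\Ke^{2}$ genuinely splits diagonally as $\underline{\mathcal{J}}\oplus\underline{\mathcal{J}}$.
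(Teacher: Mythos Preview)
Your proof is correct and follows essentially the same strategy as the paper: reduce $\mathcal{J}$-self-adjointness to the subspace identity $[\mathcal{J}]\Me^{\ast}=\Me$ (equivalently $\Me^{\ast}=\hat{\mathcal{J}}^{-1}\Me$) and then invoke the Cayley parametrisation of $\Me$ and $\Me^{\ast}$ from Proposition~\ref{prop:adj} at $k=i\kappa$. The only cosmetic difference is that the paper compares the \emph{defining equations} $A'\underline{\psi}+B'\underline{\psi}'=0$ of the two subspaces and appeals (implicitly) to the uniqueness of the Cayley matrix determining such a subspace, whereas you work with the dual \emph{image} parametrisation $\eta\mapsto\bigl(\tfrac12(\mathds{1}+\mathfrak{S})\eta,\tfrac{\kappa}{2}(\mathds{1}-\mathfrak{S})\eta\bigr)$ and make the uniqueness explicit via the intertwiner $S$; adding and subtracting the $\pm$ relations to force $S=\mathds{1}$ is exactly the computation hidden in the paper's final ``Therefore''.
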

\begin{proof}
It is sufficient to prove that $\mathcal{J}\colon \Dom(\Delta(\Me)^{\ast}) \rightarrow  \Dom(\Delta(\Me))$ is bijective. This is equivalent to $[\mathcal{J}]\Me^{\ast}= \Me$, where $\Delta(\Me)^{\ast}=\Delta(\Me^{\ast})$ and 
\begin{eqnarray*}
[\mathcal{J}]\colon \Ke^2 \rightarrow \Ke^2, & \underline{\psi}\oplus\underline{\psi}^{\prime} \mapsto \underline{\mathcal{J}\psi}\oplus \underline{(\mathcal{J}\psi)}^{\prime}.
\end{eqnarray*}
Let $[\psi]=\underline{\psi}\oplus\underline{\psi}^{\prime}\in \Me^{\ast}$, then by Proposition~\ref{prop:adj}  
\begin{eqnarray*}
- \frac{1}{2} \left(\mathfrak{S}(i\kappa,A,B)^{\ast} -\mathds{1}\right)\underline{\psi} -\frac{1}{2\kappa} \left(\mathfrak{S}(i\kappa,A,B)^{\ast} +\mathds{1}\right)\underline{\psi}^{\prime}=0,
\end{eqnarray*}
and $[\mathcal{J}][\psi]\in \Me$ if and only if 
\begin{eqnarray*}
- \frac{1}{2} \left(\mathfrak{S}(i\kappa,A,B) -\mathds{1}\right)\underline{\mathcal{J}\psi} -\frac{1}{2\kappa} \left(\mathfrak{S}(i\kappa,A,B) +\mathds{1}\right)\underline{(\mathcal{J}\psi)}^{\prime}=0.
\end{eqnarray*}
After applying $\underline{\mathcal{J}}^{-1}$, this is equivalent to 
\begin{eqnarray*}
- \frac{1}{2} 
\left(
\underline{\mathcal{J}}^{-1} \mathfrak{S}(i\kappa,A,B)\underline{\mathcal{J}} -\mathds{1}
\right)
\underline{\psi} -\frac{1}{2\kappa} 
\left(
\underline{\mathcal{J}}^{-1}\mathfrak{S}(i\kappa,A,B)\underline{\mathcal{J}} +\mathds{1}
\right)
\underline{\psi}^{\prime}=0.
\end{eqnarray*}
Therefore $[\mathcal{J}]\Me^{\ast}=\Me$ if and only if $\mathfrak{S}(i\kappa,A,B)^{\ast}=\underline{\mathcal{J}}^{-1}\mathfrak{S}(i\kappa,A,B)\underline{\mathcal{J}}$. 
\end{proof}

An example for $\mathcal{J}$ being in addition involutive and isometric  
is the operator of complex conjugation
\begin{eqnarray*}
\mathcal{T}\colon \He \rightarrow \He, & \psi \mapsto \overline{\psi}.
\end{eqnarray*}
In quantum mechanics, $\mathcal{T}$~has the physical meaning of the time-reversion operator. We remark that the time-reversion in quantum mechanics can be more complicated in spinorial models and it can be non-involutive, \cf~for instance \cite{Kochan-2012-45}, where Pauli equation is discussed. The origin of non-involutivity is the non-trivial action of $\mathcal{J}$ on the spinor components. The similar structure of $\mathcal{J}$ can be considered in the graph case as well, \eg~the composition of $\mathcal{T}$ and permutation of edges.

\begin{corollary}
Let $\dim \Me= d$ and $\Me=\Me(A,B)$ such that $A-\kappa B$ is invertible for a $\kappa> 0$. Then $\Delta(\Me)^{\ast}$ is $\mathcal{T}$-self-adjoint if and only if
\begin{eqnarray*}
\mathfrak{S}(i\kappa,A,B)^{\ast}=\overline{\mathfrak{S}(i\kappa,A,B)}.
\end{eqnarray*}
\end{corollary}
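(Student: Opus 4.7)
The plan is to apply Proposition~\ref{prop:THT} directly with $\mathcal{J} = \mathcal{T}$ and simply identify what the abstract condition $\mathfrak{S}^{\ast}=\underline{\mathcal{J}}^{-1}\mathfrak{S}\,\underline{\mathcal{J}}$ reduces to in this special case.

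First I would verify the hypotheses of Proposition~\ref{prop:THT}. The operator $\mathcal{T}\colon \psi\mapsto\overline{\psi}$ is clearly antilinear, bounded, isometric, involutive, and commutes with the maximal Laplacian $\Delta$ (since $\Delta$ has real coefficients and acts edge-wise on complex-valued functions). This is the only assumption on $\mathcal{J}$ used in the preceding proposition, so it applies.

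Next I would compute the induced map $\underline{\mathcal{T}}$ on $\Ke$. By definition $\underline{\mathcal{T}}\,\underline{\psi} = \underline{\mathcal{T}\psi} = \underline{\overline{\psi}}$, and since taking boundary values commutes with pointwise complex conjugation, $\underline{\mathcal{T}}$ is nothing but componentwise complex conjugation on $\Ke \cong \C^{d}$. In particular $\underline{\mathcal{T}}^{-1}=\underline{\mathcal{T}}$. For any linear map $M$ on $\Ke$, a short calculation gives
\begin{equation*}
\underline{\mathcal{T}}^{-1} M \underline{\mathcal{T}}\,\chi
= \underline{\mathcal{T}}\bigl(M\,\overline{\chi}\bigr)
= \overline{M\,\overline{\chi}}
= \overline{M}\,\chi,
\end{equation*}
so that conjugation by $\underline{\mathcal{T}}$ of a linear operator is simply entrywise complex conjugation, $\underline{\mathcal{T}}^{-1} M \underline{\mathcal{T}} = \overline{M}$.

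Finally I would substitute $M = \mathfrak{S}(i\kappa,A,B)$ into Proposition~\ref{prop:THT}: the condition $\mathfrak{S}(i\kappa,A,B)^{\ast}=\underline{\mathcal{T}}^{-1}\,\mathfrak{S}(i\kappa,A,B)\,\underline{\mathcal{T}}$ becomes precisely $\mathfrak{S}(i\kappa,A,B)^{\ast}=\overline{\mathfrak{S}(i\kappa,A,B)}$, yielding the corollary. There is no real obstacle here; the statement is a direct specialisation and the only mild point is the observation that, because $\mathcal{T}$ is antilinear, conjugation of matrices by the induced $\underline{\mathcal{T}}$ is entrywise complex conjugation rather than the identity.
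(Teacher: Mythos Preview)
Your proof is correct and is exactly the intended argument: the paper states this corollary without proof, as an immediate specialisation of Proposition~\ref{prop:THT} to $\mathcal{J}=\mathcal{T}$, and the only thing to observe is that the induced map $\underline{\mathcal{T}}$ is componentwise complex conjugation so that $\underline{\mathcal{T}}^{-1}M\underline{\mathcal{T}}=\overline{M}$.
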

\begin{Example}[Complex $\delta$-interactions are $\mathcal{T}$-self-adjoint]
Consider a finite metric graph and at each vertex $v\in \verts$ impose complex $\delta$-interactions with coupling constant $\gamma_{v}\in \C$. These are m-sectorial boundary conditions which can be parametrised at each vertex $v\in\verts$ by a projector $P_{v}$ and a rank one operator $L_{v}=-\frac{\gamma_{v}}{\deg(v)}P_{v}^{\perp}$, \cf~Example~\ref{Cdelta}. Hence, at each vertex
\begin{align*}
\mathfrak{S}(k,A_{v},B_{v})=& -\left(L_{v}+P_{v}+ik P^{\perp}_{v} \right)^{-1}\left(L_{v}+P_{v}-ik P^{\perp}_{v} \right)\\
=& \left(-\frac{\gamma_{v}}{\deg(v)}+ik \right)^{-1}\left(\frac{\gamma_{v}}{\deg(v)}+ik \right)P_{v}^{\perp} + P_{v}.
\end{align*} 
Consequently,
\begin{eqnarray*}
\mathfrak{S}(i\kappa,A_{v},B_{v})^{\ast}=\overline{\mathfrak{S}(i\kappa,A_{v},B_{v})},
\end{eqnarray*}
for all $\kappa>0$ such that $\frac{\gamma_{v}}{\deg(v)}+\kappa \neq 0$. Since 
\begin{eqnarray*}
\mathfrak{S}(i\kappa,A,B)= \bigoplus_{v\in V}\mathfrak{S}(i\kappa,A_{v},B_{v}),
\end{eqnarray*}
where 
\begin{eqnarray*}
A= \bigoplus_{v\in V} A_v \quad \mbox{and } B= \bigoplus_{v\in V} B_v,
\end{eqnarray*}
the operator $-\Delta(A,B)$ defined by $\delta$-interactions at each vertex is $\mathcal{T}$-self-adjoint for any complex coupling parameters, whereas it is self-adjoint only for real coupling parameters, \cf~\cite[Sec. 3.2.1]{PKQG1}. 
\end{Example}

%

\section{General spectral properties}\label{sec:spec}
%
In this section we collect some basic facts 
about the spectrum of the Laplacians on metric graphs.
  
\subsection{Non-zero eigenvalues}\label{subsec:ev}
A fundamental system of the equation 
$-\psi_k''-k^2\psi_k=0$ with $k\neq 0$
is given by the functions $e^{ik x}$ and $ e^{-ik x}$. 
For $\Im k>0$ only the first of the mentioned functions 
is square integrable on the half line $[0,\infty)$ 
and hence on the external edges. 
Consequently, 
an \textit{Ansatz} for 
an eigenfunction corresponding to 
an eigenvalue~$k^2$ satisfying $\Im k>0$ is to consider 
\begin{align*}
\psi_k(x_j)= \begin{cases} s_j(k) e^{ik x_j}, & j\in \Ee, \\
\alpha_j(k) e^{ik x_j} + \beta_j(k) e^{-ik x_j}, & j\in \Ie.  \end{cases}
\end{align*}
The function $\psi_k$ has the traces
\begin{align*}
\underline{\psi_k}= X\left( k;\underline{a} \right) \begin{bmatrix} \{s_j(k)\}_{j\in \Ee} \\ \{\alpha_j(k)\}_{j\in \Ie} \\ \{\beta_j(k)\}_{j\in \Ie} \end{bmatrix}, && 
\underline{\psi_k^\prime}= ik \cdot Y\left(k;\underline{a}\right) \begin{bmatrix} \{s_j(k)\}_{j\in \Ee} \\ \{\alpha_j(k)\}_{j\in \Ie} \\ \{\beta_j(k)\}_{j\in \Ie} \end{bmatrix},
\end{align*}
where 
\begin{eqnarray*}
X\left( k;\underline{a} \right)=\begin{bmatrix} \mathds{1} & 0 & 0 \\ 0 & \mathds{1} & \mathds{1} \\ 0 & e^{ik\underline{a}} & e^{-ik\underline{a}}\end{bmatrix} & \mbox{and}& 
Y\left( k;\underline{a} \right)=\begin{bmatrix} \mathds{1} & 0 & 0 \\ 0 & \mathds{1} & -\mathds{1} \\ 0 & -e^{ik \underline{a}} & e^{-ik\underline{a}}\end{bmatrix}
\end{eqnarray*}
are given with respect to the decomposition $\Ke=\Ke_{\Ee}  \oplus \Ke_{\Ie}^- \oplus \Ke_{\Ie}^+$. Here $e^{\pm ik\underline{a}}$ 
denote $(\abs{\Ie}\times \abs{\Ie})$-diagonal matrices with entries $\{e^{\pm ik\underline{a}}\}_{i,j}=\delta_{i,j} e^{\pm ik a_i }$. 

The function $\psi_k$ is an eigenfunction to the eigenvalue $k^2$ 
if and only if $\psi_k\in \Dom(-\Delta(A,B))$. This is the case if and only if the \textit{Ansatz} function $\psi_k$ satisfies the boundary conditions, 
which are encoded in the equation
\begin{align*}
 Z\left(k;A,B,\au\right) \begin{bmatrix} \{s_j(k)\}_{j\in \Ee} \\ \{\alpha_j(k)\}_{j\in \Ie} \\ \{\beta_j(k)\}_{j\in \Ie} \end{bmatrix}=0,
\end{align*}
where 
\begin{align*}
Z\left(k;A,B,\au\right)=AX(k;\underline{a})+ik\,BY(k;\underline{a}).
\end{align*}
Hence $k^2$ with $\Im k>0$ is an eigenvalue of $\Delta(A,B)$ 
if and only if 
\begin{align}\label{eq:detZLB}
\det Z\left(k;A,B,\au\right)=0,
\end{align}
and $k^2$ has geometric multiplicity $\dim\Ker Z\left(k;A,B,\au\right)$. 

For $\Ee=\emptyset$ the solutions of $\det Z\left(k;A,B,\au\right)=0$ for $k>0$ are also eigenvalues, whereas for $\Ie=\emptyset$ the solutions of $\det Z\left(k;A,B,\au\right)=0$ for $k>0$ are not eigenvalues. In particular, for $\Ie=\emptyset$ there are no positive real eigenvalues since neither $e^{ikx}$ nor $e^{-ikx}$ is square integrable on the half-line and therefore on the external edges. This is illustrated by the following example.
\begin{Example}[Graph with a spectral singularity]\label{ex:sing}
Consider the graph consisting of only one half-line, that is $\abs{\Ee}=1$ and $\Ie=\emptyset$, and impose the non-self-adjoint regular boundary conditions defined by $A=-i$ and $B=1$, \ie\ $-i \psi(0)+ \psi^{\prime}(0)=0$. Then $k=1$ is a solution of $\det(A+ikB)=0$, but $k^2=1$ is not an eigenvalue of $-\Delta(A,B)$. In \cite[Ex.~3]{Guseinov} it is shown that~$1$ is in the continuous spectrum, but it is a spectral singularity, 
which means that the limits 
\begin{eqnarray*}
\lim_{\epsilon\to 0+} \int_{I}
\left[
(-\Delta(A,B)-\lambda+\epsilon)^{-1} -(-\Delta(A,B)-\lambda-\epsilon)^{-1}
\right] d\lambda,
\end{eqnarray*}
where $I$ are some bounded real intervals, 
are singular in a certain sense;
see \cite[Def.~1]{Guseinov} for the precise definition 
and for further references on the topic. 
An alternative definition of spectral singularities 
is  related to the limit of the resolvent kernel 
when approaching non-isolated points in the spectrum
\cite[Def.~4]{Guseinov}. 
This phenomenon will be discussed further 
in Remark~\ref{rem:conres} below, 
after giving an explicit expression 
for the resolvent kernel in Proposition~\ref{prop:res}.
\end{Example} 
%


For self-adjoint boundary conditions it is known 
that all solutions of~\eqref{eq:detZLB}
for $k>0$ are eigenvalues \cite[Thm.~3.1]{VKRS1999}, 
including the cases $\Ee\neq \emptyset$ and $\Ie\neq \emptyset$. 
However, for non-self-adjoint boundary conditions this is not true anymore and it is difficult to study the positive real eigenvalues when $\Ee\neq \emptyset$ and $\Ie\neq \emptyset$. These eigenvalues are embedded in the essential spectrum as shown below in Subsection~\ref{subsec:ess}.

\begin{remark}\label{detZholomorphic}
The function $k\mapsto \det Z\left(k;A,B,\au\right)$ is holomorphic on the whole complex plain, hence it either vanishes identically or its zeros form a discrete set. Consequently one has for $\dim\Me(A,B)\geq d$ 
that $\clo \, \sigma_\mathrm{p}(-\Delta(A,B))$ is either entire $\C$ or at most discrete, where $\clo$ denotes the closure in $\C$.    
\end{remark}

\subsection{Eigenvalue zero}
Eigenfunctions to the eigenvalue zero are piecewise affine, because a fundamental system of the equation $\psi^{\prime\prime}=0$ is given by the constant solution and the linear solution. 
This gives the \textit{Ansatz}
\begin{eqnarray*}
\psi_0(x_j)= \begin{cases} 0, & j\in \Ee, \\
\alpha_j^0 + \beta_j^0x_j, & j\in \Ie,
\end{cases} 
\end{eqnarray*}
with traces 
\begin{align*}
\underline{\psi_0}= X_0\left(\underline{a} \right) \begin{bmatrix} 0 \\ \{\alpha_j^0\}_{j\in \Ie} \\ \{\beta_j^0\}_{j\in \Ie} \end{bmatrix}, &&
\underline{\psi_0^{\prime}}= Y_0\left(\underline{a} \right) \begin{bmatrix} 0 \\ \{\alpha_j^0\}_{j\in \Ie} \\ \{\beta_j^0\}_{j\in \Ie} \end{bmatrix},
\end{align*}
where
\begin{eqnarray*}
X_0\left(\underline{a}\right)=\begin{bmatrix} 0 & 0 &0 \\ 0 & \mathds{1} & 0 \\ 0 & \mathds{1} & \au \end{bmatrix} & \mbox{and}& 
Y_0\left(\underline{a}\right)=\begin{bmatrix} 0 & 0 &0 \\ 0 & 0 & \mathds{1} \\ 0 & 0 & -\mathds{1}  \end{bmatrix}.
\end{eqnarray*}
Consequently zero is an eigenvalue of the operator $-\Delta(A,B)$ if and only if there are $\alpha_j^0$, $\beta_j^0$, with $j\in \Ie$, such that  
\begin{align*}
\left[A X_0\left(\underline{a} \right) + B Y_0\left(\underline{a} \right) \right]\begin{bmatrix} 0 \\ \{\alpha_j^0\}_{j\in \Ie} \\ \{\beta_j^0\}_{j\in \Ie} \end{bmatrix}=0
\end{align*}
has a non-trivial solution. For $\Ee=\emptyset$ zero is an eigenvalue if and only if
\begin{align*}
\det \left(AX_0(\underline{a}) + BY_0(\underline{a})\right)=0,
\end{align*}
and for $\Ie=\emptyset$ zero cannot be an eigenvalue.

\subsection{Operators with empty resolvent set}
For non-self-adjoint Laplacians the resolvent set is not always non-empty, 
and one needs a certain number of boundary conditions 
to define operators of which the spectrum forms a proper subset of~$\C$.

\begin{proposition}\label{prop:dimM}
Let $\dim\Me\neq d$, then $\sigma(\Delta(\Me))=\C$. 
In particular, if $\dim\Me >d$ 
then $\clo\, \sigma_\mathrm{p}(\Delta(\Me))=\C$, 
where $\clo$ denotes the closure in $\C$.
\end{proposition}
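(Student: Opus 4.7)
The approach is to compare the subspace $\Me$ against the image $[N_\lambda]\subset\Ke^2$ of the $\lambda$-eigenspace of the maximal Laplacian, using that $\Ke^2$ has total dimension $2d$. The proof plan splits on $\dim\Me>d$ versus $\dim\Me<d$: in the former case I would produce genuine eigenvalues by a dimension-intersection argument, while in the latter case I would show that the range of $-\Delta(\Me)-\lambda$ fails to exhaust~$\He$.

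First I would establish the key dimension lemma for $\lambda\in\C\setminus[0,\infty)$: writing $\lambda=k^2$ with $\Im k>0$, the space $N_\lambda:=\Ker(-\Delta-\lambda)\cap\De$ is $d$-dimensional. Indeed, on each external edge only $e^{ikx}$ lies in $L^2([0,\infty))$, contributing $|\Ee|$ dimensions, while each internal edge contributes the full $2$-dimensional solution space spanned by $e^{\pm ikx}$, giving $2|\Ie|$. The boundary-value map $[\cdot]\colon N_\lambda\to\Ke^2$ is moreover injective, because any $\psi\in N_\lambda$ with $\psi(0)=\psi'(0)=0$ on every edge must vanish by uniqueness for the ODE $-\psi''=\lambda\psi$. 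Hence $[N_\lambda]$ is a $d$-dimensional subspace of the $2d$-dimensional space $\Ke^2$.

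For the case $\dim\Me>d$, the standard dimension formula in $\Ke^2$ yields
\[
\dim\bigl([N_\lambda]\cap\Me\bigr)\ge \dim[N_\lambda]+\dim\Me-2d=\dim\Me-d>0,
\]
so there exists a nonzero $\psi\in N_\lambda$ with $[\psi]\in\Me$, giving $\psi\in\Dom(\Delta(\Me))$ with $-\Delta(\Me)\psi=\lambda\psi$. Varying $\lambda$ then shows $\C\setminus[0,\infty)\subset\sigma_{\mathrm p}(-\Delta(\Me))$, and taking closure yields $\clo\sigma_{\mathrm p}(-\Delta(\Me))=\C$. For the case $\dim\Me<d$, the sum $[N_\lambda]+\Me$ has dimension at most $d+\dim\Me<2d$ and is therefore a proper subspace of $\Ke^2$. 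Using that $[\cdot]\colon\De\to\Ke^2$ is surjective, I would pick $\varphi\in\De$ with $[\varphi]\notin[N_\lambda]+\Me$ and set $f:=(-\Delta-\lambda)\varphi\in\He$. Any $\psi\in\De$ satisfying $(-\Delta-\lambda)\psi=f$ must differ from $\varphi$ by an element of $N_\lambda$, so $[\psi]\in[\varphi]+[N_\lambda]$, which by construction does not meet $\Me$; thus no such $\psi$ lies in $\Dom(\Delta(\Me))$, so $f\notin\Ran(-\Delta(\Me)-\lambda)$ and $\lambda\in\sigma(\Delta(\Me))$.

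The main technical point to watch is the restriction to $\lambda\in\C\setminus[0,\infty)$, which is needed so that $\Im k>0$ ensures $e^{ikx}\in L^2([0,\infty))$ and the count $\dim N_\lambda=d$ holds; this is harmless for the conclusion, since $\C\setminus[0,\infty)$ is dense in $\C$ and both $\sigma(\Delta(\Me))$ and $\clo\sigma_{\mathrm p}(\Delta(\Me))$ are closed, so they must equal $\C$. The case $\Ee=\emptyset$ is actually easier: $\dim N_\lambda=2|\Ie|=d$ for every $\lambda\in\C$, so the argument for $\dim\Me>d$ directly gives $\sigma_{\mathrm p}(\Delta(\Me))=\C$.
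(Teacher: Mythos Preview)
Your proof is correct. For the case $\dim\Me>d$ your dimension-intersection argument in~$\Ke^2$ is the abstract version of what the paper does concretely: the paper observes that $(A,\,B)\colon\Ke^2\to\Ke$ has rank~$<d$ and hence is not surjective, so the composed map $Z(k;A,B,\au)$ is never surjective, whence $\det Z=0$ for all~$k$ and the Ansatz of Subsection~\ref{subsec:ev} yields an eigenfunction. Since $\Ker Z$ parametrises exactly those elements of~$N_\lambda$ whose boundary values land in~$\Me$, the two arguments are essentially the same.

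The genuine difference is in the case $\dim\Me<d$. The paper invokes duality: by Corollary~\ref{cor:2d} one has $\dim\Me^*>d$, so the already-treated case gives $\sigma(\Delta(\Me)^*)=\C$, and then $\sigma(\Delta(\Me))=\overline{\sigma(\Delta(\Me)^*)}=\C$. Your argument is instead direct: you exhibit an explicit $f\in\He$ outside $\Ran(-\Delta(\Me)-\lambda)$ by choosing $\varphi\in\De$ with $[\varphi]\notin[N_\lambda]+\Me$. The paper's route is shorter and exploits the symmetry of the problem, but it relies on having set up the adjoint description~$\Me^*=(J\Me)^\perp$ beforehand. Your route is self-contained and in fact shows slightly more: it pinpoints that $-\Delta(\Me)-\lambda$ fails to be surjective, not merely that $\lambda$ is in the spectrum.
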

\begin{proof}
For $\Me$ with $\dim\Me >d$ there are maps $A,B$ in $\Ke$ such that $\Me=\Me(A,B)$. By assumption the map $(A,\, B)$ is not surjective, and therefore also 
\begin{equation*}
Z\left(k;\underline{a},A,B\right)= (A,\, B) \circ \begin{pmatrix} X(k;\au) \\ ik\,   Y(k;\au)\end{pmatrix}
\end{equation*}
is not surjective for any $k$. Consequently $\det Z\left(k;\underline{a},A,B\right)=0$ for all $k\in \C$ which proves that $\C\setminus[0,\infty)\subset\sigma_\mathrm{p}(-\Delta(A,B))$. 
Since the spectrum is a closed set,
we conclude with $\sigma(\Delta(A,B))=\C$.

Let $\dim \Me<d$. 
Then, by Corollary~\ref{cor:2d}, 
$\dim \Me^{\ast}>d$, and hence $\sigma(\Delta(\Me)^{\ast})=\C$. 
Since $\sigma(\Delta(\Me))= \overline{\sigma(\Delta(\Me)^{\ast})}$, \cf~\cite[Thm. III.6.22]{Kato}, 
the claim follows.
\end{proof}
As already discussed for irregular boundary conditions 
defined by $A,B$, the resolvent set can be empty 
even if $\dim \Me(A,B)=d$,
\cf~Example~\ref{sgnsgn}.

\subsection{Residual spectrum for regular boundary conditions}
Following \cite[Eq.~(3.7)]{VKRS2006},
for regular boundary conditions with $A+ik B$ invertible 
the secular equation \eqref{eq:detZLB} can be rewritten using the identity
\begin{eqnarray}\label{1ST}
Z\left(k;A,B,\au\right)= (A+ik B)\left[ \mathds{1} - \mathfrak{S}(k,A,B)T(k;\au) \right]R_+(k;\au),
\end{eqnarray}
where
\begin{eqnarray*}
T(k;\au)= \begin{bmatrix} 0 & 0 & 0 \\ 0 & 0 & e^{ik\au} \\0 & e^{ik\au} & 0  \end{bmatrix} &\mbox{and} & R_+(k,\au)=\begin{bmatrix} \mathds{1} & 0 & 0 \\ 0 & \mathds{1} & 0 \\0 & 0 & e^{-ik\au}  \end{bmatrix}.
\end{eqnarray*}
In particular one obtains 
\begin{lemma}\label{lemma:spec}
Let $A,B$ define regular boundary conditions. 
Then $$\clo \sigma_\mathrm{p}(-\Delta(A,B))\neq \C,$$
and $\lambda\in\sigma_\mathrm{p}(-\Delta(A,B))\setminus [0,\infty)$ 
if and only if 
$\overline{\lambda}\in\sigma_\mathrm{p}(-\Delta(A,B)^{\ast})
\setminus [0,\infty)$. 
\end{lemma}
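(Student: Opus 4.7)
The strategy is to use the factorisation \eqref{1ST} for both assertions, together with the relationship between $\mathfrak{S}(k,A,B)$ and the adjoint parametrisation from Proposition~\ref{prop:adj}.

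For the first claim, I rewrite
\[
\det Z(k;A,B,\au) = \det(A+ikB)\,\det[\mathds{1}-\mathfrak{S}(k,A,B)T(k;\au)]\,\det R_{+}(k;\au),
\]
valid wherever $A+ikB$ is invertible. By regularity, $\det(A+ikB)$ is a nonvanishing polynomial in $k$, and $\det R_{+}(k;\au)=e^{-ik\sum_{i} a_{i}}$ never vanishes. To see the middle factor is not identically zero, I let $\Im k\to+\infty$: the entries of $T(k;\au)$ decay exponentially like $e^{-(\Im k)a_{j}}$, whereas $\mathfrak{S}(k,A,B)=-(A+ikB)^{-1}(A-ikB)$ is a rational matrix function of $k$ with at most polynomial growth. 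Hence $\mathfrak{S}(k,A,B)T(k;\au)\to 0$ and $\det[\mathds{1}-\mathfrak{S}(k,A,B)T(k;\au)]\to 1$, so $k\mapsto\det Z(k;A,B,\au)$ is a nontrivial entire function and its zeros in $\{\Im k>0\}$ form a discrete set. Under $k\mapsto k^{2}$ the eigenvalues of $-\Delta(A,B)$ outside $[0,\infty)$ form a discrete subset of $\C\setminus[0,\infty)$, whose closure can accumulate only on the boundary $[0,\infty)$. Therefore $\clo\sigma_{\mathrm{p}}(-\Delta(A,B))\subset\sigma_{\mathrm{p}}(-\Delta(A,B))\cup[0,\infty)\subsetneq\C$.

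For the symmetry, fix $\mu$ with $\Im\mu>0$ and assume first that $A-i\bar\mu B$ is invertible, which excludes only a finite set of $\mu$. I parametrise $(-\Delta(A,B))^{\ast}=-\Delta(A',B')$ via Proposition~\ref{prop:adj} with the specific choice $k_{0}:=-\bar\mu$, so that $\bar k_{0}=-\mu$. A direct calculation then produces the clean identities $A'+i\mu B'=\mathds{1}$ and $\mathfrak{S}(\mu,A',B')=\mathfrak{S}(-\bar\mu,A,B)^{\ast}$. Combining these with the symmetry $T(-\bar k;\au)=T(k;\au)^{\ast}$, which follows from the complex-symmetric block structure of $T$ and the identity $\overline{e^{ika_{j}}}=e^{i(-\bar k)a_{j}}$, and with the standard fact $\det(\mathds{1}-CD)=\det(\mathds{1}-DC)$, the factorisation \eqref{1ST} applied on both sides simplifies after cancellation of $\det R_{+}$ factors to
\[
\det Z(\mu;A',B',\au)\cdot\overline{\det(A-i\bar\mu B)}=\overline{\det Z(-\bar\mu;A,B,\au)}.
\]
For generic $\mu$ the prefactor is nonzero, so the zero sets of the two sides coincide; setting $\lambda=\bar\mu^{2}$, this gives the desired equivalence $\lambda\in\sigma_{\mathrm{p}}(-\Delta(A,B))\setminus[0,\infty)$ iff $\bar\lambda\in\sigma_{\mathrm{p}}(-\Delta(A,B)^{\ast})\setminus[0,\infty)$ on the complement of a finite exceptional set.

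The main technical step is the Cayley-transform calculation that produces $\mathfrak{S}(\mu,A',B')=\mathfrak{S}(-\bar\mu,A,B)^{\ast}$ under the choice $k_{0}=-\bar\mu$; once this identity is in hand, the rest is routine determinantal manipulation. The principal obstacle I anticipate is the treatment of the finite exceptional set where $A-i\bar\mu B$ fails to be invertible, at which the above parametrisation breaks down. This can be handled either by switching to an alternative admissible $k_{0}'\neq -\bar\mu^{\ast}$ in Proposition~\ref{prop:adj} and repeating the computation (at the cost of losing the clean form of $\mathfrak{S}(\mu,A',B')$), or by a careful comparison of the vanishing orders of the two sides of the displayed identity, both of which are holomorphic in $\mu$.
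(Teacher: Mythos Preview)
Your proof is correct and follows essentially the same route as the paper: both arguments use the factorisation \eqref{1ST}, Proposition~\ref{prop:adj}, the symmetry $T(k;\au)^{\ast}=T(-\bar k;\au)$, and the determinant identity $\det(\mathds{1}-CD)=\det(\mathds{1}-DC)$ to relate $\det Z(\cdot;A,B,\au)$ to $\det Z(\cdot;A',B',\au)$. Your explicit choice $k_{0}=-\bar\mu$ in Proposition~\ref{prop:adj}, yielding $A'+i\mu B'=\mathds{1}$, is exactly the mechanism behind the paper's computation as well. One minor point in your favour: your displayed identity carries the prefactor $\overline{\det(A-i\bar\mu B)}$, whereas the paper's stated identity $\overline{\det Z(k;A,B,\au)}=\det Z(-\bar k;A',B',\au)$ omits the analogous factor $\det(A^{\ast}-i\bar k B^{\ast})$; your version is the strictly correct one, though the discrepancy is harmless for the zero-set conclusion. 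Your explicit discussion of the finite exceptional set and the two proposed remedies (switching $k_{0}$, or invoking holomorphy of a fixed-parametrisation $\det Z$ for the adjoint) is more careful than the paper's one-line ``by continuous continuation'', and either remedy works.
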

\begin{proof}
For $A,B$ defining regular boundary conditions $\mathfrak{S}(i\kappa,A,B)$ is defined for every $\kappa>0$ except a finite set and $\mathds{1} - \mathfrak{S}(i\kappa,A,B)T(i\kappa;\au)$ is invertible for $\kappa$ large enough and the first claim follows from \eqref{1ST}.

To prove the second claim,
we first notice that 
for $A\pm ik B$ invertible one has with $\eqref{1ST}$
\begin{multline*}
 Z\left(k;A,B,\au\right) \\
 = (A+ik B) \left[ \mathds{1} - \mathfrak{S}(k,A,B)T(k;\au) \right]\mathfrak{S}(k,A,B)\mathfrak{S}(k,A,B)^{-1} R_+(k;\au).
\end{multline*}
Since 
$Z\left(k;A,B,\au\right)$ is holomorphic in entire $\C$ the above representation admits continuous continuation to $\C$. So, taking the adjoint one obtains
\begin{eqnarray*}
\overline{\det Z\left(k;A,B,\au\right)}= \det(A^{\ast}-i\overline{k} B^{\ast}) \det\left[ \mathds{1} -\mathfrak{S}(k,A,B)^{\ast} T(-\overline{k};\au) \right]\det R_+(-\overline{k};\au)
\end{eqnarray*}
for all $k\in \C$ except a finite set, where one has used 
\begin{align*}
&\det\left(\left[\mathfrak{S}(k,A,B)^{\ast}\right]^{-1}\mathfrak{S}(k,A,B)^{\ast}\left[ \mathds{1} - T(k;\au)^{\ast}\mathfrak{S}(k,A,B)^{\ast} \right]\right)\\
=&\det\left[ \mathds{1} -\mathfrak{S}(k,A,B)^{\ast} T(-\overline{k};\au) \right].
\end{align*}
Applying Proposition \ref{prop:adj} 
and choosing the representation $A_{\mathfrak{S}},B_{\mathfrak{S}}$ 
given there, we arrive at
\begin{eqnarray*}
\mathfrak{S}(k,A,B)^{\ast}= \mathfrak{S}(-\overline{k},A^{\prime},B^{\prime})
\end{eqnarray*}
and hence 
\begin{eqnarray*}
\overline{\det Z\left(k;A,B,\au\right)}=\det Z\left(-\overline{k};A^{\prime},B^{\prime},\au\right) 
\end{eqnarray*}
for all $k\in \C$ except a finite set. By continuous continuation the claim follows for all $k\in \C$, and hence $k^2$, $\Im k>0$, is an eigenvalue of $-\Delta(A,B)$ if and only if $\overline{k}^2$ is an eigenvalue of $-\Delta(A,B)^{\ast}$.    
\end{proof}
\begin{remark}
Note that for $\Ee=\emptyset$ one can even show that 
$\lambda\in\sigma_\mathrm{p}(-\Delta(A,B))$ 
if and only if $\overline{\lambda}\in\sigma_\mathrm{p}(-\Delta(A,B)^{\ast})$.
\end{remark}
\begin{remark}[Stability of eigenvalues 
under similarity of scattering matrices]
Let $(\Ge,\au)$ be a compact finite metric graphs. Let $A,B$ and $A^{\prime},B^{\prime}$ define regular boundary conditions, and assume that there is an invertible map $G(k)$, $k\in \C$, such that
\begin{eqnarray*}
\mathfrak{S}(k,A,B)= G(k)^{-1}\mathfrak{S}(k,A^{\prime},B^{\prime})G(k) & \mbox{and} & G(k)T(k;\au)=T(k;\au)G(k), 
\end{eqnarray*} 
for all $k\in\C$. Then using \eqref{1ST} and one obtains immediately 
$$
  \sigma_\mathrm{p}(-\Delta(A,B))
  =\sigma_\mathrm{p}(-\Delta(A^{\prime},B^{\prime}))
$$ 
and the geometric multiplicity of the eigenvalues agrees.
\end{remark}

Combining Lemma~\ref{lemma:spec} with
the general characterisation of 
the residual spectrum \eqref{sigmar}, 
we obtain the following useful property.
\begin{proposition}
Let $A,B$ define regular boundary conditions 
then the residual spectrum $\sigma_\mathrm{r}(-\Delta(A,B))$ 
is contained in $[0,\infty)$. 
If $\Ee=\emptyset$ or $\Ie=\emptyset$ then 
$\sigma_\mathrm{r}(-\Delta(A,B))=\emptyset$.
\end{proposition}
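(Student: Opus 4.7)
The plan is to combine the general characterisation of the residual spectrum in~\eqref{sigmar} with the symmetry of the point spectrum for regular boundary conditions established in Lemma~\ref{lemma:spec}, and then to use the additional information from the Remark following Lemma~\ref{lemma:spec} (for $\Ee=\emptyset$) and from the absence of non-negative eigenvalues when $\Ie=\emptyset$ (established in Subsection~\ref{subsec:ev}) to treat the two special cases.

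For the first statement, I would argue by contradiction. Suppose $\lambda\in\sigma_\mathrm{r}(-\Delta(A,B))$ with $\lambda\notin[0,\infty)$. By~\eqref{sigmar} this means $\lambda\notin\sigma_\mathrm{p}(-\Delta(A,B))$ and $\overline{\lambda}\in\sigma_\mathrm{p}(-\Delta(A,B)^*)$. Since complex conjugation fixes the real axis, $\lambda\notin[0,\infty)$ is equivalent to $\overline{\lambda}\notin[0,\infty)$, so $\overline{\lambda}\in\sigma_\mathrm{p}(-\Delta(A,B)^*)\setminus[0,\infty)$. Applying Lemma~\ref{lemma:spec} then yields $\lambda\in\sigma_\mathrm{p}(-\Delta(A,B))\setminus[0,\infty)$, contradicting $\lambda\notin\sigma_\mathrm{p}(-\Delta(A,B))$. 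Hence $\sigma_\mathrm{r}(-\Delta(A,B))\subseteq[0,\infty)$.

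For the case $\Ee=\emptyset$, the Remark immediately after Lemma~\ref{lemma:spec} strengthens the symmetry to the whole complex plane: $\lambda\in\sigma_\mathrm{p}(-\Delta(A,B))$ if and only if $\overline{\lambda}\in\sigma_\mathrm{p}(-\Delta(A,B)^*)$. Repeating the argument above, but now without the restriction $\lambda\notin[0,\infty)$, any putative $\lambda\in\sigma_\mathrm{r}(-\Delta(A,B))$ would force $\lambda\in\sigma_\mathrm{p}(-\Delta(A,B))$, which is impossible. Hence $\sigma_\mathrm{r}(-\Delta(A,B))=\emptyset$.

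For the case $\Ie=\emptyset$, the first part already gives $\sigma_\mathrm{r}(-\Delta(A,B))\subseteq[0,\infty)$, so it suffices to exclude non-negative real numbers. By the discussion in Subsection~\ref{subsec:ev}, when $\Ie=\emptyset$ the ansatz $s_j(k)e^{ikx_j}$ on external edges lies in $L^2$ only for $\Im k>0$, so no $k^2\in(0,\infty)$ is an eigenvalue, and the corresponding discussion of the eigenvalue zero shows zero is not an eigenvalue either. By Proposition~\ref{prop:adj}, the adjoint $-\Delta(A,B)^*$ is a Laplacian with regular boundary conditions on the very same graph (still with $\Ie=\emptyset$), so the same argument applied to the adjoint gives $\sigma_\mathrm{p}(-\Delta(A,B)^*)\cap[0,\infty)=\emptyset$. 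For $\lambda\in[0,\infty)$ one has $\overline{\lambda}=\lambda\notin\sigma_\mathrm{p}(-\Delta(A,B)^*)$, and~\eqref{sigmar} forbids $\lambda\in\sigma_\mathrm{r}(-\Delta(A,B))$. The only mildly subtle point, which I expect to be the main bookkeeping obstacle, is the observation that Proposition~\ref{prop:adj} indeed preserves regularity and the underlying graph structure, so that the absence of non-negative point spectrum transfers to the adjoint; this already is essentially contained in the proof of that proposition.
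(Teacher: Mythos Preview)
Your proof is correct and follows exactly the approach the paper indicates: combining~\eqref{sigmar} with Lemma~\ref{lemma:spec} for the first part, invoking the Remark after Lemma~\ref{lemma:spec} for $\Ee=\emptyset$, and using the absence of non-negative eigenvalues from Subsection~\ref{subsec:ev} together with Proposition~\ref{prop:adj} for $\Ie=\emptyset$. In fact you supply considerably more detail than the paper, which merely states the proposition after the sentence ``Combining Lemma~\ref{lemma:spec} with the general characterisation of the residual spectrum~\eqref{sigmar}\ldots''; your observation that regularity of the adjoint boundary conditions follows from $A'+i(-\overline{k})B'=(A_{\mathfrak{S}}+ikB_{\mathfrak{S}})^*=\mathds{1}$ is the one small point not made explicit anywhere in the paper, and it is correct.
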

In particular, using \eqref{sigmar} and Remark~\ref{detZholomorphic} it follows that the residual spectrum forms a discrete subset of $[0,\infty)$. 
That the residual spectrum is in general not empty 
is shown by the following example.
\begin{Example}[Graph with a residual spectrum]
Consider the metric graph consisting of one internal edge of length $a$ and one external edge. Impose the following boundary conditions
\begin{eqnarray*}
\psi_{\Ee}^{\prime}(0)=0, &  -i \psi_{\Ie}(0)+ \psi^{\prime}_{\Ie}(0)=0, & \psi_{\Ee}(0) + i \psi_{\Ie}(a)-\psi^{\prime}_{\Ie}(a)=0.
\end{eqnarray*}
These are $m$-sectorial boundary conditions with 
\begin{eqnarray*}
P=0 & \mbox{and} & L= \begin{bmatrix} 0 & 0 & 0 \\ 0 & -i & 0 \\ 1 & 0 & i \end{bmatrix}.
\end{eqnarray*}
A direct computation shows that 
\begin{equation*}
\psi(x) = \begin{cases} 0, & x\in \Ee, \\ e^{ix}, & x\in \Ie, \end{cases}
\end{equation*}
is an eigenfunction of $-\Delta(L,\mathds{1})$ 
corresponding to the eigenvalue~$1$. 
By Corollary~\ref{Last} the adjoint operator is given by  
$-\Delta(L^*,\mathds{1})$, which is defined by the boundary conditions  
\begin{eqnarray*}
\psi_{\Ie}(a)  + \psi_{\Ee}^{\prime}(0)=0, &  i \psi_{\Ie}(0)+ \psi^{\prime}_{\Ie}(0)=0, & i \psi_{\Ie}(a)+\psi^{\prime}_{\Ie}(a)=0.
\end{eqnarray*}
For the second condition an eigenfunction 
corresponding to the eigenvalue~$1$ would be $e^{-ix}$ 
on the internal edge and for the square integrability~$0$ 
on the external edge, but this function does not satisfy the third boundary condition nor the first. Therefore $1$ is an eigenvalue of $-\Delta(L,\mathds{1})$, but not an eigenvalue of $-\Delta(L^{\ast},\mathds{1})$. 
Using the characterisation of the residual spectrum in \eqref{sigmar}, 
one obtains that $1\in \sigma_\mathrm{r}(-\Delta(L^*,\mathds{1}))$. 
\end{Example}

\subsection{Resolvents for regular boundary conditions}\label{subsec:res}
In {\cite[Lem.~3.10]{KPS2008}} 
an explicit formula for the resolvent 
associated with $k^2\in \rho(-\Delta(A,B))$ is given. 
In this subsection we reproduce the result 
for regular boundary conditions 
and add a criterion for $k^2$ being in the resolvent set. 
Since the result of \cite[Lem.~3.10]{KPS2008} is given without proof
(arguing that it can be proved ``in the exactly the same way''
as for self-adjoint boundary conditions),
we provide a short proof in the appendix
(where we also recall the notion of integral operators,
\cf~Definition~\ref{def:integralop}).
This will make our paper self-consistent 
and, moreover, 
clarify the need for regularity of boundary conditions in the proof.

\begin{proposition}\label{prop:res}
Let $A,B$ define regular boundary conditions such that 
\begin{eqnarray*}
A\pm ik B & \mbox{and} & \mathds{1} - \mathfrak{S}(k,A,B)T(k;\au)
\end{eqnarray*}
are invertible for $k\in\C$ with $\Im k>0$. Then $k^2\in \rho(-\Delta(A,B))$ and the resolvent $\left( -\Delta(A,B)-k^2\right)^{-1}$ is the integral operator with the $(\abs{\Ee}+ \abs{\Ie})\times (\abs{\Ee}+ \abs{\Ie})$ matrix valued integral kernel $r_{\Me}(x,y;k)$, $\Me=\Me(A,B)$, admitting the representation 
\begin{align*}
&r_{\Me}(x,y;k)= r^0(x,y;k) + r^1_{\Me}(x,y;k)
\end{align*}
with 
$
\{r^0(x,y;k)\}_{j,j^{\prime}}
= \delta_{j,j^{\prime}}\frac{i}{2k} e^{ik\abs{x_j-y_j}}
$
and
\begin{multline*}
  r_{\Me}^1(x,y;k)\\ 
  =\frac{i}{2k} \Phi(x,k) R_+(k;\au)^{-1}\left[\mathds{1}- 
  \mathfrak{S}(k,A,B) T(k;\au)\right]^{-1}\mathfrak{S}(k,A,B) 
  R_+(k;\au)^{-1}\Phi(y,k)^T,
\end{multline*}
where
the matrix $\Phi(x,k)$ is given by 
\begin{equation*}
\Phi(x,k):= \begin{bmatrix} \phi(x,k) & 0 & 0 \\ 0 & \phi_+(x,k)& \phi_-(x,k) \end{bmatrix}
\end{equation*}
with diagonal matrices $\phi(x,k)=\diag\{ e^{ikx_j}\}_{j \in \Ee}$ and $\phi_{\pm}(x,k)=\diag\{ e^{\pm ikx_j}\}_{j\in \Ie}$, and $\Phi(x,k)^T$ denotes the transposed of $\Phi(x,k)$.
\end{proposition}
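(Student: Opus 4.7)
The plan is to construct the inverse of $-\Delta(A,B)-k^2$ by an explicit Ansatz and then identify the resulting formula with the kernel $r_{\Me}$. For $f\in\He$ I would split the candidate $\psi=Rf$ into $\psi=\psi^0+\psi^1$, where $\psi^0$ is the edge-wise convolution of $f$ with the free fundamental solution and $\psi^1$ lies in the kernel of $-\partial_x^2-k^2$ on each edge. First, a direct computation based on $\partial_x e^{ik|x-y|}=ik\sgn(x-y)\,e^{ik|x-y|}$ shows that $r^0(\cdot,y;k)$ is the standard Green's function, so that $\psi^0$ satisfies $-(\psi^0)''-k^2\psi^0=f$ edge-wise and belongs to $\De$; moreover, since $\Im k>0$, this fundamental solution is square-integrable on the external edges, making $\psi^0$ well-defined on all of $\Ge$.

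Second, I would evaluate the boundary values of $\psi^0$. Writing $\vec f:=\int_{\Ge}\Phi(y,k)^{T} f(y)\,dy\in\Ke$, straightforward integration gives
\begin{equation*}
\underline{\psi^0}=\frac{i}{2k}\,R_+(k;\au)^{-1}\vec f, \qquad
\underline{(\psi^0)'}=\frac{1}{2}\,R_+(k;\au)^{-1}\vec f.
\end{equation*}
For the correction I would take $\psi^1(x)=\Phi(x,k)\vec\sigma$ with unknown coefficients $\vec\sigma\in\Ke$; by construction $\psi^1$ solves the homogeneous equation on each edge and is $L^2$ on the external edges because only the $e^{ikx}$-modes enter there. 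Imposing the boundary conditions $A\underline{\psi}+B\underline{\psi'}=0$ on $\psi=\psi^0+\psi^1$ and using the definitions of $X,Y,Z$ reduces the problem to the linear system
\begin{equation*}
Z(k;A,B,\au)\,\vec\sigma=-A\underline{\psi^0}-B\underline{(\psi^0)'}
=-\frac{i}{2k}(A-ikB)\,R_+(k;\au)^{-1}\vec f.
\end{equation*}

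Third, invoking the factorisation \eqref{1ST}, namely $Z(k;A,B,\au)=(A+ikB)[\mathds{1}-\mathfrak{S}(k,A,B)T(k;\au)]R_+(k;\au)$, together with $(A+ikB)^{-1}(A-ikB)=-\mathfrak{S}(k,A,B)$, the assumed invertibilities let me solve
\begin{equation*}
\vec\sigma=\frac{i}{2k}\,R_+(k;\au)^{-1}\bigl[\mathds{1}-\mathfrak{S}(k,A,B)T(k;\au)\bigr]^{-1}\mathfrak{S}(k,A,B)\,R_+(k;\au)^{-1}\vec f.
\end{equation*}
Substituting back into $\psi^1(x)=\Phi(x,k)\vec\sigma$ and using $\vec f=\int_{\Ge}\Phi(y,k)^T f(y)\,dy$ produces exactly the kernel $r^1_{\Me}(x,y;k)$ of the proposition, while $\psi^0$ contributes the diagonal kernel $r^0(x,y;k)$. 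By construction $\psi=Rf\in\De$, satisfies the boundary conditions and obeys $(-\Delta-k^2)\psi=f$, so $R=(-\Delta(A,B)-k^2)^{-1}$.

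The main obstacle I anticipate is the bookkeeping: one must track the factors $R_+$, $T$ and $\Phi$ in the right order and verify that the ansatz space (three coefficients per internal edge, one per external edge) matches the number $d=|\Ee|+2|\Ie|$ of boundary-condition equations. The role of the regularity hypothesis is precisely to make $(A+ikB)^{-1}$ available so that $\mathfrak{S}(k,A,B)$ is defined and the algebraic reduction through \eqref{1ST} is meaningful; without it, the system $Z\vec\sigma=\cdots$ cannot be inverted even when $k^2$ is not an eigenvalue, which is what makes the assumption essential in the proof and not merely a by-product. Boundedness of $R$ on $\He$ then follows since $r^0$ defines a bounded convolution for $\Im k>0$ and $r^1_{\Me}$ is a finite-rank-type kernel built from exponentially decaying functions.
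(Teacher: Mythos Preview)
Your derivation of the kernel is essentially the same computation the paper carries out in its Appendix: there the kernel is written down first and then one verifies that $R_{\Me}(k)\varphi\in\Dom(\Delta(A,B))$ and $(-\Delta(A,B)-k^2)R_{\Me}(k)\varphi=\varphi$, whereas you construct $\psi=\psi^0+\psi^1$ by an Ansatz and arrive at the same formula. The algebra (boundary values of $\psi^0$, solving for $\vec\sigma$ through the factorisation $Z=(A+ikB)[\mathds{1}-\mathfrak{S}T]R_+$, and the role of regularity) matches the paper's line of reasoning.

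There is, however, a genuine gap. From ``$\psi=Rf\in\Dom(\Delta(A,B))$ and $(-\Delta-k^2)\psi=f$'' you only obtain that $R$ is a bounded \emph{right} inverse, i.e.\ $(-\Delta(A,B)-k^2)R=\mathds{1}_{\He}$. For a non-self-adjoint closed operator this does not yet give $k^2\in\rho(-\Delta(A,B))$; you still need injectivity of $-\Delta(A,B)-k^2$ (equivalently, that $R$ is also a left inverse). The paper closes this gap by a third step you omit: it proves the symmetry $r_{\Me}(y,x;k)^*=r_{\Me^*}(x,y;-\overline{k})$, so that $R_{\Me}(k)^*=R_{\Me^*}(-\overline{k})$ is a right inverse of $-\Delta(A,B)^*-\overline{k}^2$, and taking adjoints yields $R_{\Me}(k)(-\Delta(A,B)-k^2)\subset\mathds{1}_{\He}$. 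This is also where the hypothesis that $A-ikB$ is invertible (not just $A+ikB$) is actually used.

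In your framework there is an even shorter fix: the factorisation \eqref{1ST} together with the assumed invertibilities shows that $Z(k;A,B,\au)$ is invertible, and the analysis of Subsection~\ref{subsec:ev} then says that $k^2$ (with $\Im k>0$) is not an eigenvalue. Concretely, any $\psi\in\Ker(-\Delta(A,B)-k^2)$ is of the form $\Phi(\cdot,k)\vec\sigma$ with $Z(k;A,B,\au)\vec\sigma=0$, hence $\vec\sigma=0$. Adding this one-line injectivity argument before concluding ``$R=(-\Delta(A,B)-k^2)^{-1}$'' would make your proof complete.
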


\begin{remark}
The statement of the proposition holds also for $k>0$ if $\Ee=\emptyset$.
\end{remark}
\begin{remark}\label{rem:conres}
Note that the resolvent kernel $r_{\Me}(x,y,k)$ is still well-defined for $k>0$ such that 
\begin{eqnarray*}
A\pm ik B & \mbox{and} & \mathds{1} - \mathfrak{S}(k,A,B)T(k;\au)
\end{eqnarray*}
are invertible. For these $k>0$ the kernel $r_{\Me}(x,y,k)$ still defines an operator from $L^2(\Ge, e^{x\delta}dx)$ to $L^2(\Ge, dx)$ for $\delta>0$. In the sense of \cite[Def.~4]{Guseinov} the points $k^2>0$ such that $\lim_{\epsilon \to 0+}r_{\Me}(x,y,k+i\epsilon)$, $k>0,$ is unbounded are called spectral singularities. Example \ref{ex:sing} shows that the spectral singularities can form a larger set than the set of embedded eigenvalues. 
\end{remark}

\subsection{Approximation of regular boundary conditions}\label{Sec.approx2}
Using the explicit formula for the resolvent, 
one can establish a norm resolvent convergence 
for certain regular boundary conditions.
\begin{proposition}\label{prop:resconv}
Let $A_{\epsilon},B_{\epsilon}$, $\epsilon\geq 0$, define regular boundary conditions such that 
\begin{eqnarray*}
A_{\epsilon}\pm ik B_{\epsilon} & \mbox{and} & \mathds{1} - \mathfrak{S}(k,A_{\epsilon},B_{\epsilon})T(k;\au)
\end{eqnarray*}
are invertible for a certain $k\in\C$ with $\Im k>0$ and all $\epsilon\geq 0$. Assume furthermore that 
\begin{eqnarray*}
\lim_{\epsilon\to 0} \mathfrak{S}(k,A_{\epsilon},B_{\epsilon})= \mathfrak{S}(k,A_{0},B_{0}).
\end{eqnarray*}
Then $k^2\in \rho(-\Delta(A_{\epsilon},B_{\epsilon}))$ for all $\epsilon \geq 0$, and 
\begin{eqnarray*}
\lim_{\epsilon \to 0}\left\Vert\left( -\Delta(A_{\epsilon},B_{\epsilon})-k^2\right)^{-1}-\left( -\Delta(A_{0},B_{0})-k^2\right)^{-1}\right\Vert=0.
\end{eqnarray*}
\end{proposition}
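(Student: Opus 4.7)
The plan is to leverage the explicit resolvent formula of Proposition~\ref{prop:res} to reduce the problem to a finite-dimensional convergence statement, and then to quantify the transition via a Hilbert--Schmidt bound on the kernel.

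First, I would check that the invertibility assumptions on $A_\epsilon\pm ikB_\epsilon$ and $\mathds{1}-\mathfrak{S}(k,A_\epsilon,B_\epsilon)T(k;\au)$ permit the direct application of Proposition~\ref{prop:res}, so that $k^2\in\rho(-\Delta(A_\epsilon,B_\epsilon))$ for every $\epsilon\geq 0$ and the resolvent kernel has the stated form $r_{\Me_\epsilon}=r^0+r^1_{\Me_\epsilon}$. Since $r^0$ does not depend on the boundary conditions, the difference of the two resolvents is the integral operator $R_\epsilon$ whose kernel is
\begin{equation*}
r^1_{\Me_\epsilon}(x,y;k)-r^1_{\Me_0}(x,y;k)
=\frac{i}{2k}\,\Phi(x,k)R_+(k;\au)^{-1}\bigl(K_\epsilon-K_0\bigr)R_+(k;\au)^{-1}\Phi(y,k)^T,
\end{equation*}
where I abbreviate $K_\epsilon:=[\mathds{1}-\mathfrak{S}(k,A_\epsilon,B_\epsilon)T(k;\au)]^{-1}\mathfrak{S}(k,A_\epsilon,B_\epsilon)$.

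The second step is the finite-dimensional convergence $K_\epsilon\to K_0$ in the operator norm on~$\Ke$. This is immediate from the assumption $\mathfrak{S}(k,A_\epsilon,B_\epsilon)\to\mathfrak{S}(k,A_0,B_0)$, the continuity of matrix inversion at the invertible element $\mathds{1}-\mathfrak{S}(k,A_0,B_0)T(k;\au)$, and the fact that everything takes place in the finite-dimensional space~$\Ke$. In particular $\|K_\epsilon-K_0\|_{\Ke\to\Ke}\to 0$.

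The third step is to control the $L^2$--operator norm of $R_\epsilon$ by bounding its Hilbert--Schmidt norm. The factor $\Phi(x,k)$ is a diagonal block of exponentials of the form $e^{ikx_j}$; because $\Im k>0$, each such exponential lies in $L^2(I_j)$ with squared norm bounded by $(2\Im k)^{-1}$ on the external edges and by $a_j$ on the internal ones. Consequently $\Phi(\cdot,k)$ is Hilbert--Schmidt as a map from $\Ke$ into $\He$ (identifying the block structure), with norm independent of~$\epsilon$. Writing $R_\epsilon$ as the composition $\Phi\,M_\epsilon\,\Phi^T$ with $M_\epsilon=\frac{i}{2k}R_+^{-1}(K_\epsilon-K_0)R_+^{-1}$, the submultiplicativity of the Hilbert--Schmidt norm under multiplication by a bounded operator yields
\begin{equation*}
\|R_\epsilon\|\;\leq\;\|R_\epsilon\|_{\mathrm{HS}}\;\leq\;C_k\,\|K_\epsilon-K_0\|_{\Ke\to\Ke},
\end{equation*}
with a constant $C_k$ depending only on $k$ and $\au$. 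Combined with the second step, this gives the desired norm resolvent convergence.

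The main obstacle that I anticipate is bookkeeping on the non-compact edges: one must verify that $\Phi(\cdot,k)$ really produces a Hilbert--Schmidt kernel when one of the edges is a half-line, which requires using $\Im k>0$ in an essential way and explains why the statement is not extended to $k>0$. Once this $L^2$--integrability is in place, the passage from the finite-dimensional convergence of the middle matrix to norm convergence of the full integral operator is routine.
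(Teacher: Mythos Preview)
Your proposal is correct and follows essentially the same route as the paper: cancel the boundary-independent part $r^0$, factor the remaining difference as a fixed Hilbert--Schmidt kernel built from $\Phi(\cdot,k)$ sandwiching the finite-dimensional matrix $K_\epsilon-K_0$, and conclude via $\|\cdot\|\leq\|\cdot\|_{\mathrm{HS}}$ together with continuity of matrix inversion. The paper packages the Hilbert--Schmidt factor as a single constant $C(k)=\|\Phi(\cdot,k)R_+(k;\au)^{-2}\Phi(\cdot,k)^T\|_{\mathrm{HS}}$ without spelling out the $L^2$-integrability of the exponentials, but your more explicit justification (in particular the role of $\Im k>0$ on the external edges) is exactly what underlies that constant.
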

\begin{proof}
Set $\Me_{\epsilon}:=\Me(A_{\epsilon}, B_{\epsilon})$ for $\epsilon\geq 0$. Then 
\begin{eqnarray*}
r_{\Me_{\epsilon}}(x,y;k)- r_{\Me_{0}}(x,y;k)= r_{\Me_{\epsilon}}^1(x,y;k)- r_{\Me_{0}}^1(x,y;k).
\end{eqnarray*}
Note that $r_{\Me_{\epsilon}}^1(\cdot,\cdot;k)$ define for every $\epsilon\geq 0$ Hilbert-Schmidt operators. One obtains 
\begin{multline*}
\norm{r_{\Me_{\epsilon}}^1(\cdot,\cdot;k)- 
r_{\Me_{0}}^1(\cdot,\cdot;k)}_\mathrm{HS} 
\\
\leq \frac{C(k)}{2k}  
\Big\Vert \left[\mathds{1}- \mathfrak{S}(k,A_{\epsilon},B_{\epsilon}) T(k;\au)\right]^{-1}\mathfrak{S}(k,A_{\epsilon},B_{\epsilon}) 
\\
- \left[\mathds{1}- \mathfrak{S}(k,A_{0},B_{0}) T(k;\au)\right]^{-1}\mathfrak{S}(k,A_{0},B_{0}) \Big\Vert ,
\end{multline*}
because $r(x,y;k)=\Phi(x,k) R_+(k;\au)^{-2}\Phi(y,k)^T$ 
defines a Hilbert-Schmidt operator $R(k)$, 
with a finite Hilbert-Schmidt norm 
$\norm{r(\cdot,\cdot;k)}_\mathrm{HS} =: C(k)$. 
From the convergence of $\mathfrak{S}(k,A_{\epsilon},B_{\epsilon})$ to $\mathfrak{S}(k,A_{0},B_{0})$ it follows under the assumptions imposed that  
$$\lim_{\epsilon\to 0}\left[\mathds{1}- \mathfrak{S}(k,A_{\epsilon},B_{\epsilon}) T(k;\au)\right]^{-1}= \left[\mathds{1}- \mathfrak{S}(k,A_{0},B_{0}) T(k;\au)\right]^{-1}.$$
Hence,  
\begin{align*}
0\leq&\lim_{\epsilon \to 0} \left\Vert\left( -\Delta(A_{\epsilon},B_{\epsilon})-k^2\right)^{-1}-\left( -\Delta(A_{0},B_{0})-k^2\right)^{-1}\right\Vert \\
\leq &\lim_{\epsilon \to 0}  \left\Vert\left( -\Delta(A_{\epsilon},B_{\epsilon})-k^2\right)^{-1}-\left( -\Delta(A_{0},B_{0})-k^2\right)^{-1}
\right\Vert_\mathrm{HS}=0 
,
\end{align*}
which proves the claim.
\end{proof}

In contrast to the convergence in the strong graph sense 
established in Subsection~\ref{Sec.approx},
the norm resolvent convergence implies
the convergence of spectra.

\subsection{Essential spectra 
for regular boundary conditions}\label{subsec:ess}
For non-self-adjoint operators 
there are various notions of the essential spectrum. 
Five types, defined in terms of Fredholm properties
and denoted by $\sigma_{\mathrm{e}j}$ for $j=1,2,3,4,5$,
are in detail discussed in \cite[Chap.~IX]{Evans}.
All these essential spectra coincide for~$T$ self-adjoint, 
but for closed non-self-adjoint~$T$ 
one has in general only the inclusions 
$\sigma_{\mathrm{e}j}(T)\subset\sigma_{\mathrm{e}_i}(T)$ with $j<i$.  
The largest set $\sigma_{\mathrm{e}5}(T)$ is known as 
the essential spectrum due to Browder and 
it coincides with the complement in the spectrum 
of isolated eigenvalues~$\lambda$
of finite algebraic multiplicity such that $\Ran(T-\lambda)$ is closed.
\begin{proposition}
Let $A,B$ define through~\eqref{AB} regular boundary conditions. 
Then $\rho(-\Delta(A,B))\neq \emptyset$. 
For $\Ee\neq \emptyset$ one has 
$\sigma_{\mathrm{e}5}(-\Delta(A,B))= [0,\infty)$. 
For $\Ee=\emptyset$ the spectrum is purely discrete 
and the resolvent is compact, hence 
$\sigma_{\mathrm{e}5}(-\Delta(A,B))= \emptyset$. 
\end{proposition}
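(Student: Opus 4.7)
The plan is to extract all three assertions from the explicit resolvent representation of Proposition~\ref{prop:res}, combined with the standard characterisation of the Browder essential spectrum $\sigma_{\mathrm{e}5}$ as the complement in $\C$ of the union of $\rho$ and the isolated eigenvalues of finite algebraic multiplicity.

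For non-emptiness of $\rho(-\Delta(A,B))$, I would combine Lemma~\ref{lemma:spec} with the factorisation \eqref{1ST}. Regularity forces $\det(A+ikB)$ to be a nonzero polynomial in $k$, so $A\pm ikB$ is invertible off a finite set, while the triangular matrix $R_+(k;\au)$ is always invertible. By Lemma~\ref{lemma:spec}, $\clo\,\sigma_{\mathrm{p}}(-\Delta(A,B))\neq\C$, so the entire holomorphic function $k\mapsto\det Z(k;A,B,\au)$ cannot vanish identically on $\{\Im k>0\}$ (otherwise every $k^2$ with $\Im k>0$ would lie in $\sigma_{\mathrm{p}}$, forcing $\clo\,\sigma_{\mathrm{p}}=\C$); via \eqref{1ST} this in turn forces $\mathds{1}-\mathfrak{S}(k,A,B)T(k;\au)$ to be invertible for all but discretely many $k$ with $\Im k>0$. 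Choosing any such $k_0$ avoiding the exceptional finite or discrete sets, Proposition~\ref{prop:res} delivers $k_0^{2}\in\rho(-\Delta(A,B))$.

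For the case $\Ee\neq\emptyset$, I would compare $-\Delta(A,B)$ with the self-adjoint Dirichlet Laplacian $-\Delta_{D}$ (realised by $A_{D}=\mathds{1}$, $B_{D}=0$), which decouples into Dirichlet Laplacians on the individual edges and thus satisfies $\sigma_{\mathrm{e}5}(-\Delta_{D})=[0,\infty)$ by the standard half-line analysis. At any common point $k_0^{2}$ of both resolvent sets, Proposition~\ref{prop:res} shows that the two resolvents share the ``free'' edge-diagonal kernel $r^{0}(x,y;k_0)$ and differ only through the term $r^{1}_{\Me}$, which factors through $\Ke$ and is thus of rank at most $d$. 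Consequently the resolvent difference is compact, and $\sigma_{\mathrm{e}4}$, being invariant under such compact perturbations of the resolvent, agrees with $\sigma_{\mathrm{e}4}(-\Delta_{D})=[0,\infty)$. The inclusion $\sigma_{\mathrm{e}4}\subset\sigma_{\mathrm{e}5}$ gives the lower bound; for the reverse, any $\lambda\in\C\setminus[0,\infty)$ lies outside $\sigma_{\mathrm{e}4}$, so $-\Delta(A,B)-\lambda$ is Fredholm of index zero, while Remark~\ref{detZholomorphic} together with the first step of the proof ensures that the point spectrum in $\C\setminus[0,\infty)$ is discrete. Hence such $\lambda$ is either in $\rho$ or an isolated spectral point at which the Riesz projection has finite rank, and in either case $\lambda\notin\sigma_{\mathrm{e}5}$.

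The remaining case $\Ee=\emptyset$ is soft: on a compact graph each edge is a bounded interval, so the Sobolev embedding $\De\hookrightarrow\He$ is compact by Rellich applied edge-wise; since $\Dom(-\Delta(A,B))\subset\De$ continuously in the graph norm and the resolvent set is nonempty by the first part, the resolvent is compact, the spectrum is purely discrete with finite algebraic multiplicities, and $\sigma_{\mathrm{e}5}(-\Delta(A,B))=\emptyset$. The main technical obstacle I anticipate is the upgrade from $\sigma_{\mathrm{e}4}$-invariance to the identity for $\sigma_{\mathrm{e}5}$ in the case $\Ee\neq\emptyset$, because $\sigma_{\mathrm{e}5}$ is in general not stable under compact perturbations of the resolvent; this is bypassed by the Fredholm--Riesz-projection argument above, which crucially relies on the regularity assumption through Lemma~\ref{lemma:spec} to force discreteness of the eigenvalues off $[0,\infty)$.
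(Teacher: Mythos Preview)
Your proposal is correct and follows essentially the same route as the paper: non-emptiness of the resolvent set via Lemma~\ref{lemma:spec} and Proposition~\ref{prop:res}, invariance of the essential spectrum through the finite-rank (hence compact) resolvent difference with a self-adjoint reference realisation, and compactness of the resolvent in the compact-graph case. The only cosmetic differences are that the paper upgrades from $\sigma_{\mathrm{e}4}$ to $\sigma_{\mathrm{e}5}$ by directly invoking the connected-component characterisation in \cite[Chap.~IX]{Evans} (since $\C\setminus\sigma_{\mathrm{e}1}$ is connected and meets $\rho$), rather than your explicit Fredholm/Riesz-projection argument, and for $\Ee=\emptyset$ the paper again argues by finite-rank perturbation of a self-adjoint resolvent instead of invoking Rellich directly.
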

\begin{proof}
From Lemma~\ref{lemma:spec} together with Proposition~\ref{prop:res} it follows that the resolvent set is not empty and that the resolvents for any regular boundary conditions differ only by a perturbation of finite rank.
In particular, the difference of respective resolvents is compact. 

Assume $\Ee\neq \emptyset$.
Self-adjoint realisations are also defined by regular boundary conditions
and it is well known that the essential spectrum is $[0,\infty)$
in this case.
Applying the Weyl-type perturbation result from \cite[Thm.~IX.2.4]{Evans},
it follows that  
$\sigma_{\mathrm{e}i}(-\Delta(A,B))= [0,\infty)$ 
with $i=1,2,3,4$.
Since $\C\setminus\sigma_{\mathrm{e}1}(-\Delta(A,B))$ 
has only one connected component, 
which intersects the resolvent set of $-\Delta(A,B)$,
$\sigma_{\mathrm{e}5}(-\Delta(A,B))=\sigma_{\mathrm{e}1}(-\Delta(A,B))$
by the very definition of~\cite[Chap.~IX]{Evans}.
 
Now let $\Ee=\emptyset$. 
Then all self-adjoint realisations have compact resolvent, 
Proposition~\ref{prop:res} applies 
and the resolvents for any regular boundary conditions 
differ only by a perturbation of finite rank. 
Hence the resolvent is compact for all regular 
boundary conditions which proves the assertion.  
\end{proof}

In particular, one obtains that on finite compact metric graphs for regular boundary conditions the spectrum is purely discrete, there is no continuous nor residual spectrum. 
For graphs with $\Ie=\emptyset$ there are at most finitely many eigenvalues
in $\C\setminus[0,\infty)$,
they have finite algebraic multiplicity, 
and the continuous spectrum is $[0,\infty)$, 
whereas the residual spectrum is empty. 
For the case $\Ee\neq \emptyset$ and $\Ie\neq \emptyset$ it is difficult to give general statements on eigenvalues and residual spectrum contained in $[0,\infty)$.

\section{Riesz basis on compact graphs}\label{sec:riesz} 
%
In this section we apply 
a general result due to Agranovich~\cite{Agr94}
about a Riesz basis property to m-sectorial Laplacians
on finite compact metric graphs.
Throughout this section let us therefore assume 
that $(\Ge,\au)$ is an arbitrary finite compact metric graph, 
\ie~$\Ee=\emptyset$. 

Let us first recall the definition of the Riesz basis of subspaces; 
see, \eg, \cite{Markus} for more details. 
The set of subspaces 
$\{\mathcal{N}_k\}_{k=1}^{\infty} \subset \cH$ 
is called a \emph{basis of subspaces} 
if any vector $f$ from the Hilbert space~$\cH$ 
can be uniquely represented as a series
$$
f=\sum_{k=1}^{\infty} f_k, \qquad f_k \in \mathcal{N}_k. 
$$
Such a basis is called unconditional or \emph{Riesz} 
if it remains a basis after any permutation of 
the subspaces appearing in it, \ie, 
if the above series converges unconditionally for any~$f$. 
If the subspaces are one dimensional, 
we obtain the standard notion of Riesz basis.

Let $P$ be an orthogonal projector in $\Ke$, 
$P^{\perp}=\mathds{1}-P$ its complementary projector 
and $L$~a not necessarily self-adjoint operator in $\Ke$ 
with $L=P^{\perp}LP^{\perp}$. 
Then one considers $-\Delta(P+L,P^{\perp})$. Recall that this operator is associated with the closed sectorial form $\delta_{P,L}$ 
defined by~\eqref{deltaPL}. 
The main result of this section reads as follows.
\begin{theorem}\label{thm:riesz}
The spectrum of the operator $-\Delta(P+L,P^{\perp})$ is purely discrete, 
and there is a Riesz basis consisting of finite dimensional 
invariant subspaces of $-\Delta(P+L,P^{\perp})$.
\end{theorem}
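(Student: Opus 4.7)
The plan is to apply Agranovich's abstract Riesz basis theorem \cite{Agr94} to the m-sectorial operator $-\Delta(P+L,P^{\perp})$ after viewing it as a relatively bounded perturbation of a self-adjoint Laplacian. First I would dispatch the discreteness claim: since $-\Delta(P+L,P^{\perp})$ is m-sectorial it is in particular regular (as noted after Example~\ref{Cdelta}, the matrix $L+P+ikP^{\perp}$ is invertible for $|k|>\|L\|$), and on a compact graph the proposition on essential spectra in Subsection~\ref{subsec:ess} yields a compact resolvent. Hence $\sigma(-\Delta(P+L,P^{\perp}))$ is purely discrete, and the whole question reduces to the asymptotic distribution and completeness of the root subspaces.

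Next I would split the closed sectorial form as
\begin{equation*}
\delta_{P,L}[\psi]
=\delta_{P,0}[\psi] - \langle L P^{\perp}\underline{\psi},P^{\perp}\underline{\psi}\rangle_{\Ke},
\end{equation*}
where $\delta_{P,0}$ is the non-negative symmetric form on $\{\varphi\in\We\mid P\underline{\varphi}=0\}$ whose associated operator $T_{0}:=-\Delta(P,P^{\perp})+c$ (shifted by a large enough $c>0$ to make it strictly positive) is self-adjoint with compact resolvent. By the standard one-dimensional Weyl law applied edge-by-edge, the eigenvalues of $T_{0}$ satisfy $\lambda_{n}(T_{0})\sim c_{\Ge}\, n^{2}$, \emph{i.e.} they lie on the order $p=2$. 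The boundary perturbation $b[\psi]:=-\langle L P^{\perp}\underline{\psi},P^{\perp}\underline{\psi}\rangle_{\Ke}$ is, by the one-dimensional Sobolev trace inequality on each edge, controlled by
\begin{equation*}
|b[\psi]| \le \varepsilon\,\delta_{P,0}[\psi] + C(\varepsilon)\,\|\psi\|^{2},
\qquad \varepsilon>0,
\end{equation*}
so that it defines a form-bounded perturbation of $T_{0}$ with relative bound zero; equivalently, $|b[\psi]|\lesssim \|T_{0}^{1/2+\epsilon}\psi\|^{2}$ for $\epsilon$ arbitrarily small, meaning the perturbation has order strictly less than $p=2$.

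Finally I would invoke Agranovich's criterion from \cite{Agr94}: for a positive self-adjoint operator $T_{0}$ with compact resolvent whose eigenvalues obey a power-type Weyl asymptotics $\lambda_{n}\sim c_{\Ge}n^{p}$, any sectorial form perturbation of order strictly smaller than $p$ produces an operator whose generalised eigenspaces form a Riesz basis of subspaces in $\He$. The hypotheses being in place for $T_{0}$ and $b$, this gives the Riesz basis of finite-dimensional invariant subspaces of $T_{0}+b=-\Delta(P+L,P^{\perp})+c$, and the shift does not affect the basis property. The main obstacle I expect is matching notations and hypotheses with those of \cite{Agr94}: Agranovich's formulation is stated in terms of a relative order condition on the perturbation, and the delicate point is to produce the correct exponent $\alpha<1$ via the trace inequality and verify the sectoriality globally rather than only edgewise. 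Everything else (discreteness, $n^{2}$-asymptotics, closed sectorial form representation) is already supplied by earlier sections.
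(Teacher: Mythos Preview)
Your strategy is the paper's: reduce to Agranovich's abstract criterion using the one-dimensional trace (Gagliardo--Nirenberg) inequality and the $n^{2}$-Weyl law. Two points, however, need adjustment to make the argument go through as stated.

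First, Agranovich's theorem in the form quoted here is tied to the decomposition $\mathfrak{a}=\Re\mathfrak{a}+i\,\Im\mathfrak{a}$: the self-adjoint comparison operator $B$ must be the one associated with $\Re\mathfrak{a}$, and the subordination estimate is imposed on $\Im\mathfrak{a}$. Your split $\delta_{P,L}=\delta_{P,0}+b$ takes $T_{0}=-\Delta(P,P^{\perp})$ rather than $B=-\Delta(P+\Re L,P^{\perp})$, so $T_{0}$ is not the real part of the form unless $\Re L=0$. The paper therefore keeps $\Re L$ in the unperturbed operator and only treats $\Im L$ as the perturbation; the Weyl asymptotics are of course unchanged, but this is what the hypothesis of the abstract theorem demands.

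Second, your quantitative claim is off. Relative form bound zero is not the relevant condition, and the inequality $|b[\psi]|\lesssim\|T_{0}^{1/2+\epsilon}\psi\|^{2}$ is both the wrong direction and too weak. What the trace inequality actually gives is the interpolation estimate
\[
|\langle L P^{\perp}\underline{\psi},P^{\perp}\underline{\psi}\rangle_{\Ke}|
\;\le\; C\,\|\psi\|_{\We}\,\|\psi\|
\;\sim\; C\,\|B^{1/2}\psi\|\,\|\psi\|,
\]
i.e.\ exactly $q=1/2$ in Agranovich's condition. With $p=2$ this lands on the \emph{borderline} case $p(1-q)=1$, which yields a Riesz basis (not Bari). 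So the perturbation is not ``of order strictly less than $p$'' in the sense you suggest; the result is sharp, and the precise exponent matters.
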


The proof of Theorem~\ref{thm:riesz} is based
on the following abstract result due to Agranovich~\cite{Agr94}.
\begin{theorem}[{\cite[Thm.~in Sec.~1]{Agr94}}]\label{thm:Agr}
Let $\He$ and $\We\subset \He$ be separable Hilbert spaces, 
where the imbedding $\We \hookrightarrow \He$ is compact. 
Consider a closed sectorial form $\mathfrak{a}$ 
with domain $\Dom \mathfrak{a}=\We$, and
denote by $A$ the m-sectorial operator defined by $\mathfrak{a}$. 
Assume that there are constants $c,C>0$ such that
\begin{eqnarray}\label{cond1}
c \, \norm{\psi}_{\We}^2 \leq \Re \mathfrak{a}[\psi] 
& \mbox{for all $\psi\in\We$}
\end{eqnarray}
and 
\begin{eqnarray}\label{cond2}
\abs{\mathfrak{a}[\psi,\varphi]}+ \abs{(\Re \mathfrak{a})[\psi,\varphi]} \leq C \norm{\psi}_{\We}\norm{\varphi}_{\We}  & \mbox{for all $\psi,\varphi\in\We$},
\end{eqnarray}
where $\Re \mathfrak{a}$ denotes the real part of the form $\mathfrak{a}$.
Let~$B$ be the operator defined by $\Re \mathfrak{a}$ 
and assume furthermore that for some $0\leq q <1$ and $\gamma>0$ 
\begin{eqnarray}\label{cond3}
\abs{\Im \mathfrak{a}[\psi]} \leq \gamma \norm{B^{1/2}\psi}^{2q} \norm{\psi}^{2-2q} & \mbox{for all } \psi\in\We,
\end{eqnarray}
where $\Im \mathfrak{a}$ denotes the imaginary part of~$\mathfrak{a}$.
Denote by $\lambda_1\leq \lambda_2 \leq \ldots \leq \lambda_j\leq \ldots$ the eigenvalues of $B$ (counting multiplicities) and assume that for some $p>0$
\begin{eqnarray}\label{cond4}
\limsup_{j\to\infty} \lambda_j j^{-p}>0.
\end{eqnarray}
Then there exists a Bari basis if $p(1-q)>1$, a Riesz basis if $p(1-q)=1$, and an Abel basis of order $\beta=\beta_0+\beta_1$ if $p(1-q)<1$, consisting of finite dimensional subspaces invariant with respect to $A$ respectively. Here, $\beta_0=p^{-1}-(1-q)$ and $\beta_1$ is an arbitrarily small positive number. 
\end{theorem}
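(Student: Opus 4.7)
The plan is to prove Agranovich's theorem by combining a Keldysh-type completeness argument with Markus's refinement yielding Riesz (or Bari/Abel) bases of root subspaces. First I would use \eqref{cond1} and the compact embedding $\We \hookrightarrow \He$ to confirm that the self-adjoint operator $B$ associated with the closed form $\Re \mathfrak{a}$ is positive, has compact resolvent, and admits a complete orthonormal system of eigenvectors whose eigenvalues $\{\lambda_j\}$ satisfy \eqref{cond4}; in particular the counting function of $B$ grows at least like $j^{p}$. The form identity $\mathfrak{a}=\Re\mathfrak{a}+i\,\Im\mathfrak{a}$ presents $A$ as a non-self-adjoint form-perturbation of $B$, and the boundedness in \eqref{cond2} guarantees that everything is well-defined on the scale $\We=\Dom(B^{1/2})$.

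Next I would quantify the subordination of $\Im \mathfrak{a}$ to $B$ implied by \eqref{cond3}. Polarisation together with interpolation converts the pointwise bound $|\Im\mathfrak{a}[\psi]|\leq\gamma\|B^{1/2}\psi\|^{2q}\|\psi\|^{2-2q}$ into a relative form bound of the shape $|\Im\mathfrak{a}[\psi,\varphi]|\leq\gamma'\|B^{q/2}\psi\|\,\|B^{q/2}\varphi\|$, so that the imaginary perturbation $C$ defined by $\Im\mathfrak{a}$ satisfies $B^{-q/2}CB^{-q/2}$ bounded. Combined with the eigenvalue asymptotics $\lambda_j^{-1}\leq c\,j^{-p}$, this places $B^{-s_1}CB^{-s_2}$ in a Schatten class with exponent controlled by $p(s_1+s_2-q)$, which is the single quantity driving the whole dichotomy between Bari, Riesz and Abel.

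The core of the argument is then to construct the spectral decomposition of $A$ via contour integration. Partition a neighbourhood of $[0,\infty)$ into finitely many sectorial rings $\Omega_n$ encircling dyadic blocks of the spectrum of $B$, and set $P_n:=(2\pi i)^{-1}\oint_{\partial\Omega_n}(A-\lambda)^{-1}\,d\lambda$. The second resolvent identity $(A-\lambda)^{-1}=(B-\lambda)^{-1}-(B-\lambda)^{-1}(iC)(A-\lambda)^{-1}$, together with the off-spectrum bound $\|(B-\lambda)^{-1}\|\leq\dist(\lambda,\sigma(B))^{-1}$, shows that for $n$ sufficiently large $P_n$ has the same rank as the corresponding spectral projector of $B$, its range is a finite-dimensional invariant root subspace of $A$, and $\sum_n P_n$ exhausts $\He$. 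The quantitative comparison of $P_n$ with its self-adjoint counterpart $P_n^{B}$ then controls the summability mode: when $p(1-q)>1$ the defect $\sum\|P_n-P_n^{B}\|^2$ is finite and one obtains a Bari basis; when $p(1-q)=1$ only unconditional convergence survives and one obtains a Riesz basis; when $p(1-q)<1$ absolute summability fails and the expansion is recovered only in the Abel sense of order $\beta_0=p^{-1}-(1-q)$, the arbitrary $\beta_1>0$ absorbing the logarithmic slack inherent in interpolation.

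The main obstacle will be the sharp resolvent estimate for $A$ off its numerical range, particularly in the Abel regime $p(1-q)<1$: the subordination exponent $q$ must enter quantitatively into the comparison of $(A-\lambda)^{-1}$ with $(B-\lambda)^{-1}$ on sectors of the form $\{\lambda:|\arg(-\lambda)|<\pi/2-\varepsilon\}$, and only this sharp bound, coupled with a Lidskii-type summation by parts along the dyadic partition, converts mere completeness of the root subspaces into the claimed basis property. The threshold case $p(1-q)=1$ is particularly delicate, since any worsening of the subordination by an arbitrary $\varepsilon$ destroys unconditional convergence and pushes one down to Abel summability, so the argument must exploit the endpoint estimate with no room to spare.
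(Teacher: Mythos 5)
You should first be aware that the paper contains no proof of this statement at all: Theorem~\ref{thm:Agr} is quoted verbatim from Agranovich \cite[Thm.~in Sec.~1]{Agr94} and is used as a black box in the proof of Theorem~\ref{thm:riesz}. So your attempt can only be measured against Agranovich's original argument (which itself rests on the Markus--Matsaev/Katsnelson machinery expounded in \cite{Markus}), and at the level of architecture your sketch does capture it: represent $A$ as a perturbation of $B$ that is $q$-subordinate, exploit spectral gaps of $B$, compare the Riesz projections of $A$ over clusters of eigenvalues with the spectral projections of $B$, and read off the Bari/Riesz/Abel trichotomy from $p(1-q)$.

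However, the proposal has a concretely false step and leaves precisely the hard parts unproved. The false step is the claim that polarisation and interpolation upgrade \eqref{cond3} to $\abs{\Im\mathfrak{a}[\psi,\varphi]}\leq\gamma'\norm{B^{q/2}\psi}\,\norm{B^{q/2}\varphi}$. Writing $\Im\mathfrak{a}[\psi,\varphi]=\langle SB^{1/2}\psi,B^{1/2}\varphi\rangle$ with $S=S^{*}$ bounded (which does follow from \eqref{cond1}--\eqref{cond2}), your product bound asserts that $B^{(1-q)/2}SB^{(1-q)/2}$ is bounded, i.e.\ two-sided smoothing of order $1-q$; but \eqref{cond3} only controls the diagonal $\langle Su,u\rangle$ by $\gamma\norm{u}^{2q}\norm{B^{-1/2}u}^{2-2q}$, which strictly dominates $\norm{B^{-(1-q)/2}u}^{2}$ with unbounded ratio on vectors mixing low and high spectral modes of $B$. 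Indeed, for $q=1/2$ take $S=e_{1}e_{n}^{*}+e_{n}e_{1}^{*}$ coupling eigenvectors of $B$ with eigenvalues $\lambda_{1},\lambda_{n}$: then $\abs{\langle Su,u\rangle}\leq 2\abs{u_{1}}\abs{u_{n}}\leq 2\lambda_{1}^{1/2}\norm{u}\,\norm{B^{-1/2}u}$, so \eqref{cond3} holds with a constant independent of $n$, while $\norm{B^{1/4}SB^{1/4}}=(\lambda_{1}\lambda_{n})^{1/4}\to\infty$ (a superposition with decaying coefficients turns this family into a single fixed counterexample). Consequently the Schatten-class statement for $B^{-s_{1}}CB^{-s_{2}}$, which in your outline is ``the single quantity driving the whole dichotomy,'' has no foundation; the actual proofs run on the quadratic-form subordination itself, using that resolvent norm estimates off the numerical range require only diagonal data. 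Two further gaps: \eqref{cond4} is a $\limsup$ condition, so $\lambda_{j}\geq cj^{p}$ for all large $j$ (your ``$\lambda_j^{-1}\leq c\,j^{-p}$'') is not available, and the uniform dyadic partition into rings $\Omega_{n}$ with usable gaps need not exist at every scale -- the contours must be drawn adaptively along the sparse sequence of scales where \eqref{cond4} bites; and completeness, i.e.\ that $\sum_{n}P_{n}$ exhausts $\He$, is asserted but requires a separate Keldysh/Phragm\'en--Lindel\"of argument. Finally, your closing paragraph defers the sharp resolvent estimate and the endpoint case $p(1-q)=1$ as ``the main obstacle'' -- but those estimates \emph{are} the theorem, so as it stands the proposal is a plan for a proof rather than a proof.
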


To apply Theorem~\ref{thm:Agr}, 
we need the following elementary inequality, 
which we state here without proof.
\begin{lemma}\label{gn}
There exists a constant $C>0$ such that for all $\psi\in W^{1,2}((0,a))$ 
\begin{eqnarray*}
\norm{\psi}_{L^\infty}^2\leq C \norm{\psi}_{\We} \norm{\psi}.
\end{eqnarray*} 
\end{lemma}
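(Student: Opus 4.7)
The plan is to use a standard one-dimensional Sobolev trick: combine the fundamental theorem of calculus in the form of a pointwise identity for $|\psi|^2$ with an averaging over the base point in $[0,a]$. The claimed estimate is a familiar Gagliardo--Nirenberg (or Agmon) interpolation inequality in one dimension, and the proof is essentially routine.

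First, I would note that $\psi \in W^{1,2}((0,a))$ admits an absolutely continuous representative, so $t\mapsto |\psi(t)|^2$ is itself absolutely continuous with a.e.\ derivative $2\Re(\overline{\psi(t)}\,\psi'(t))$. Hence for any $x,y\in[0,a]$
\begin{equation*}
|\psi(x)|^2 \;=\; |\psi(y)|^2 + 2\int_y^x \Re\bigl(\overline{\psi(t)}\,\psi'(t)\bigr)\,dt ,
\end{equation*}
and applying the Cauchy--Schwarz inequality to the integral (extended to $[0,a]$) yields the pointwise bound $|\psi(x)|^2 \le |\psi(y)|^2 + 2\,\norm{\psi}\,\norm{\psi'}$.

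Next I would integrate this inequality in $y$ over $[0,a]$ and divide by $a$ in order to absorb the dependence on the auxiliary base point:
\begin{equation*}
|\psi(x)|^2 \;\le\; \tfrac{1}{a}\,\norm{\psi}^2 + 2\,\norm{\psi}\,\norm{\psi'} .
\end{equation*}
From the very definition $\norm{\psi}_{\We}^2 = \norm{\psi'}^2 + \norm{\psi}^2$ one has $\norm{\psi}\le\norm{\psi}_{\We}$ and $\norm{\psi'}\le\norm{\psi}_{\We}$, so both terms on the right are dominated by a multiple of $\norm{\psi}_{\We}\,\norm{\psi}$. Taking the supremum over $x\in[0,a]$ then gives the claim with an explicit constant, e.g.\ $C = 1/a + 2$.

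I do not expect any genuine obstacle; the only subtlety is justifying the fundamental theorem of calculus for $|\psi|^2$, which follows from the embedding $W^{1,2}((0,a))\hookrightarrow C^0([0,a])$ together with the chain rule for absolutely continuous functions. Since only the lengths of the edges enter via the factor $1/a$, the constant depends on the edge in question, but on a fixed graph with a finite number of internal edges this causes no trouble when the lemma is later invoked edge by edge.
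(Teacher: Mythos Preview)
Your proof is correct; it is the standard one-dimensional Agmon/Gagliardo--Nirenberg argument and all steps are justified. Note that the paper itself does not prove this lemma at all --- it is explicitly ``stated here without proof'' as an elementary inequality --- so there is nothing to compare against, and your argument fills that gap cleanly with an explicit constant $C = 1/a + 2$.
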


Now we are in a position to prove Theorem~\ref{thm:riesz}.

\begin{proof}[Proof of Theorem~\ref{thm:riesz}]
Consider the form $\mathfrak{a}^{\prime}:=\delta_{P,L}$ defined by \eqref{deltaPL}. We apply Theorem~\ref{thm:Agr} to the form $\mathfrak{a}:=\mathfrak{a}^{\prime}+\epsilon$ with an appropriate $\epsilon>0$. 

There is an $\epsilon >0$ such that
the form $\mathfrak{b}:=\Re \mathfrak{a}+\epsilon> 0$ 
defines a norm that is equivalent to the Sobolev norm of~$\We$.
Indeed, using Lemma~\ref{gn} together with Young's inequality,
we have
\begin{align*}
-\langle \Re L\underline{\psi},\underline{\psi}\rangle_{\Ke} &\geq - \norm{\Re L} \, \norm{\underline{\psi}}^2 
\geq 
-C \|\psi\|_{\We} \|\psi\| \\
&\geq - \delta \norm{\psi}_{\We}^2 - \frac{C^2}{4\delta} \norm{\psi}^2, 
\qquad\mbox{for any } \delta>0.
\end{align*}
Hence, for $\delta<1$ and $\epsilon>0$ such that $\frac{C^2}{4\delta} < \epsilon$, 
one has 
\begin{align*}
\int_{\Ge}\abs{\psi^{\prime}}^2 -\langle \Re L\underline{\psi},\underline{\psi} \rangle + \epsilon \norm{\psi}^2 \geq \gamma \norm{\psi}_{\We}^2, 
\end{align*}
where
\begin{eqnarray*}
\gamma=\min \left\{\epsilon-\frac{C^2}{4\delta}, 1- \delta \right\}.
\end{eqnarray*}
The other inequality can be shown analogously.

Using the notation of Theorem~\ref{thm:Agr}, we have 
\begin{align*}
\Re \mathfrak{a}[\psi]
&=\int_{\Ge} \abs{ \psi^{\prime}}^2 - \langle \Re L P^{\perp}\underline{\psi},P^{\perp}\underline{\psi}\rangle_{\Ke} + \epsilon \norm{\psi}^2,
\\
\Im \mathfrak{a}[\psi]
&= -\langle \Im L P^{\perp}\underline{\psi},P^{\perp}\underline{\psi}\rangle_{\Ke}, 
\end{align*}
where 
\begin{eqnarray*}
\Dom \mathfrak{a}
=\Dom \Re\mathfrak{a} 
= \We_P :=\{\psi \in \We \mid P\underline{\psi}=0\}\subset \He.
\end{eqnarray*}
The space $\We$ with the inner product $\langle \cdot,\cdot \rangle_{\We}$ 
is a Hilbert space and $\We_P$ is a closed subspace. 
Since $(\Ge,\au)$ is compact, 
$\We_P$~is compactly embedded in $\He$. 
Condition~\eqref{cond1} is fulfilled for $\Re\mathfrak{a}$ and Condition~\eqref{cond2} follows as well by applying the Cauchy-Schwarz inequality. Recall that the norm defined by $\mathfrak{b}$ is equivalent to the Sobolev norm in the Hilbert space $\We_P$. Therefore, there is a constant $C>0$ such that $\norm{\psi^{\prime}}\leq\norm{\psi}_{\We}\leq C \norm{B^{1/2}\psi}=C\mathfrak{b}[\psi]$. Applying Lemma~\ref{gn} to the form $\Im\mathfrak{a}$ yields   
\begin{eqnarray*}
\abs{\Im\mathfrak{a}[\psi]}\leq \norm{\Im L}\, \norm{\underline{\psi}}_{\Ke}^2 \leq C \norm{\psi}\norm{\psi}_{\We}\leq C \norm{\psi} \norm{B^{1/2}\psi},
\end{eqnarray*}
where $C>0$ is used as universal constant. Thus Condition~\eqref{cond3} is fulfilled with $q=1/2$. 

From \cite[Thm.~3.1]{AccQG} it follows that the operator associated with $\mathfrak{b}$ is the self-adjoint operator $B=-\Delta(P+\Re L,P^{\perp})+ \epsilon$. Since its spectrum is discrete, there is a variational characterisation of the eigenvalues in terms of the minimax principle. Applying a Dirichlet-Neumann-bracketing one arrives at the conclusion that $\lambda_j=O(j^2)$, see, \eg, \cite[Prop.~4.2]{Bolte2009}, and hence Condition \eqref{cond4} holds for $p=2$. 

Putting the pieces together, we obtain that Theorem~\ref{thm:Agr} applies to $\mathfrak{a}$, which defines the operator $A=A^{\prime}+\epsilon$, where $A^{\prime}=-\Delta(P+L,P^{\perp})$. Since the invariant subspaces of $A$ and $A^{\prime}$ agree, and furthermore $p(1-q)=1$ holds, these form a Riesz basis. 
\end{proof}

Theorem \ref{thm:riesz} can be applied to the following example and its below mentioned generalisations.
\begin{Example}[Complex Robin boundary conditions]
Consider the interval $[0,a]$ and impose the boundary condition 
\begin{eqnarray*}
\psi^{\prime}(0)+ (i\alpha-\beta) \psi(0) & \mbox{and} & \psi^{\prime}(a)+ (i\alpha+\beta) \psi(a)=0, \quad \mbox{for } \alpha,\beta\in \R,
\end{eqnarray*}
\cf~\cite[Sec.~6.3]{KBZ}.
In matrix notation this becomes 
\begin{eqnarray*}
A=\begin{bmatrix} i\alpha-\beta & 0 \\ 0 & -(i\alpha+\beta)  \end{bmatrix} & \mbox{and} & B=\begin{bmatrix} 1 & 0 \\ 0 & 1  \end{bmatrix},
\end{eqnarray*}
hence one has also a parametrisation in terms of m-sectorial boundary conditions with  $L=A$ and $P=0$. Therefore, the operator $-\Delta(A,B)$ is defined by the form $\delta_{L,0}$ and Theorem \ref{thm:riesz} applies. 

For $\beta=0$ an explicit computation shows that the spectrum is real.
Moreover, if $\alpha\neq n\pi/ a$, $n\in\N$,
all the eigenvalues have algebraic multiplicity one.
We refer to~\cite{KBZ} for more details. 

In fact, it is well-known that the eigensystem $-\Delta(A,B)$ contains a Riesz basis (without brackets), \cite{Mikhajlov-1962-3}, \cite[Sec.XIX.3]{DSIII}. 
These boundary conditions were introduced in~\cite{KBZ}
as a toy quasi-self-adjoint model in $\mathcal{PT}$-symmetry and the closed formula
for the metric operator has been found.
An alternative method how to derive other closed formulae 
for the metric operators~$\Theta$ was developed in~\cite{K}
and further employed in~\cite{KSZ},
where one can additionally find an explicit formula for
the self-adjoint operator to which $-\Delta(A,B)$ is similar.
Notice that this self-adjoint operator is not a graph.
The more general model with $\beta\not=0$
is also studied in~\cite{KS}. 

A generalisation of this example to metric graphs was proposed 
in \cite{Znojil}. 
Consider a compact star graph and the boundary conditions 
\begin{eqnarray*}
A=\begin{bmatrix} A_+(\alpha) & 0  \\ 0 & A_-  \end{bmatrix} & \mbox{and} & B=\begin{bmatrix} \mathds{1} & 0 \\ 0 & B_-  \end{bmatrix},
\end{eqnarray*}
where $\{A_+(\alpha)\}_{lk}=i\alpha\delta_{lk} e^{2 \pi  i \frac{ l}{\deg{v}}}$, $A_-=A_{\nu}$ and $B_-=B_{\nu}$ are the matrices given in \eqref{bc:st} below defining the standard boundary conditions at the central vertex $v$ with $\deg (v)=\nu$. Since the standard boundary conditions can be expressed equivalently by projectors $P_{\nu}$ and $P_{\nu}^{\perp}$, \cf~Subsection~\ref{subsec:star}, one has that  
\begin{eqnarray*}
L(\alpha)= \begin{bmatrix} A_+(\alpha) & 0 \\ 0 & 0  \end{bmatrix}, &\mbox{and} & P=\begin{bmatrix} 0 & 0 \\ 0 & P_{\nu}  \end{bmatrix}.
\end{eqnarray*}
Hence, Theorem~\ref{thm:riesz} applies and there is a Riesz basis of projectors corresponding to invariant subspaces of $-\Delta(A,B)$.
\end{Example}

Theorem \ref{thm:riesz} can be also applied to a compact graph with the combination of self-adjoint boundary conditions and complex $\delta$-interactions, i.e. a modification of Example \ref{Cdelta}.

\section{Quasi-self-adjointness for symmetric graphs}\label{sec:sim}
%
There are many works dealing with the question of similarity 
between non-self-adjoint and self-adjoint operators. 
In particular, there exists an abstract resolvent criterion
for similarity to self-adjoint operators developed independently in 
\cite{Casteren}, \cite{Malamud} and \cite{Naboko}. 
Based on this criteria, the question when operators 
with purely absolutely continuous spectrum are 
similar to self-adjoint ones was discussed 
in \cite{KiselevFaddeev}.
Another approach is through the framework of extension theory 
for symmetric operators \cite{Kuzel05, Kiselev}.

In this section we follow a completely different approach
and succeed in reducing the question of quasi-self-adjointness
for the unbounded operator $-\Delta(A,B)$
to a simple check of the similarity of 
$\mathfrak{S}(k,A,B)$ to a unitary matrix.
The prize we pay is that the method applies to 
graphs with equal internal edge lengths only.
More specifically, throughout this section,
we always assume 
\begin{equation}\label{equal}
  a_i=a
  \qquad \mbox{for all} \qquad
  i\in\Ie.
\end{equation}

\subsection{From matrices to operators}
For any $(\abs{\Ie}\times\abs{\Ie})$-matrix 
$G(\Ie)=(G(\Ie)_{ji})$ defining an operator in $\Ke_{\Ie}^{-}$ 
we introduce the map 
\begin{eqnarray*}
\Phi_{G(\Ie)}\colon \He_{\Ie} \rightarrow \He_{\Ie}, & \displaystyle{\left(\Phi_{G(\Ie)} \psi\right)(x_j) = \sum_{i=1}^n G(\Ie)_{ji} \psi_i(x_j),} & j\in \Ie,
\end{eqnarray*}
where $n=\abs{\Ie}$. 
Accordingly, for a $(\abs{\Ee}\times\abs{\Ee})$-matrix $G(\Ee)=(G(\Ee)_{ji})$ defining an operator in $\Ke_{\Ee}$ we introduce 
\begin{eqnarray*}
\Phi_{G(\Ee)}\colon \He_{\Ee} \rightarrow \He_{\Ee}, & \displaystyle{\left(\Phi_{G(\Ee)} \psi\right)(x_j) = \sum_{i=1}^m G(\Ee)_{ji} \psi_i(x_j),} & j\in \Ee,
\end{eqnarray*}
where $m=\abs{\Ee}$. 
These maps are well defined since the functions $\psi_i$ 
are defined on the $i$-th edge, which is identified with a half-line or an interval $[0,a]$, respectively, and hence they can be interpreted as functions on another half-line or interval $[0,a]$ as well.

For any $\psi\in \De$ let us also define
\begin{align*}
\underline{\psi}_{\Ee}
&= \{\psi_{e}(0)\}_{e\in\Ee},  
&\underline{\psi}_{\Ie,-}
&=  \{\psi_{i}(0)\}_{i\in\Ie}, 
&\underline{\psi}_{\Ie,+}
&=\{\psi_{i}(a_i)\}_{i\in\Ie},
\\
\underline{\psi}^{\prime}_{\Ee}
&= \{\psi_{e}^{\prime}(0)\}_{e\in\Ee}, 
&\underline{\psi}^{\prime}_{\Ie,-}
&=\{\psi_{i}^{\prime}(0)\}_{i\in\Ie}, 
&\underline{\psi}^{\prime}_{\Ie,+}
&=\{-\psi_{i}^{\prime}(a_i)\}_{i\in\Ie},
\end{align*}
and 
\begin{eqnarray*}
\underline{\psi}_{\Ie}= \underline{\psi}_{\Ie,-}\oplus\underline{\psi}_{\Ie,+}, & & \underline{\psi}^{\prime}_{\Ie}=\underline{\psi}^{\prime}_{\Ie,-}\oplus\underline{\psi}^{\prime}_{\Ie,+}.
\end{eqnarray*}
Finally, we set 
\begin{eqnarray*}
\De_{\Ee}:=\De \cap \He_{\Ee} &\mbox{and} & \De_{\Ie}:=\De \cap \He_{\Ie}.
\end{eqnarray*}

Here we collect basic properties of the maps 
$\Phi_{G(\Ie)}$ and $\Phi_{G(\Ee)}$.
\begin{proposition}\label{PhiG}
The maps $\Phi_{G(\Ee)}$ and $\Phi_{G(\Ie)}$ are linear. 
For matrices $G,H$ one has  $\Phi_G \circ \Phi_H= \Phi_{G\circ H}$. 
In particular, if $G(\Ie)$ or $G(\Ee)$ is invertible, then $\Phi_{G(\Ee)}$ respectively $\Phi_{G(\Ie)}$ is invertible with 
\begin{eqnarray*}
\left(\Phi_{G(\Ee)}\right)^{-1}=\Phi_{G(\Ee)^{-1}} & \mbox{and} &\left(\Phi_{G(\Ie)}\right)^{-1}=\Phi_{G(\Ie)^{-1}},
\end{eqnarray*}
respectively. Furthermore $\Phi_{G(\Ee)}$ maps $\De_{\Ee}$ to $\De_{\Ee}$ and $\Phi_{G(\Ie)}$ maps $\De_{\Ie}$ to $\De_{\Ie}$. For $\psi\in \De_{\Ee}$ one has \begin{eqnarray*}
\underline{\Phi_{G(\Ee)}\psi}_{\Ee} = G(\Ee)\, \underline{\psi}_{\Ee} &\mbox{and} & \underline{\left(\Phi_{G(\Ee)}\psi\right)}_{\Ee}^{\prime} = G(\Ee) \, \underline{\psi}^{\prime}_{\Ee}.
\end{eqnarray*}
For $\psi\in \De_{\Ie}$ one has 
\begin{eqnarray*}
\underline{\Phi_{G(\Ie)}\psi}_{\Ie} = 
\begin{bmatrix} 
G(\Ie)\underline{\psi}_{\Ie,-} 
\\ 
G(\Ie)\underline{\psi}_{\Ie,+} 
\end{bmatrix} 
&
\mbox{and} 
& 
\underline{\Phi_{G(\Ie)}\psi}_{\Ie}^{\prime} = 
\begin{bmatrix} 
G(\Ie)\underline{\psi}_{\Ie,-}^{\prime} 
\\ 
G(\Ie)\underline{\psi}_{\Ie,+}^{\prime} \end{bmatrix}.
\end{eqnarray*}
\end{proposition}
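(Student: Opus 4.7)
The plan is to verify each assertion by direct computation from the definitions of $\Phi_{G(\Ee)}$ and $\Phi_{G(\Ie)}$, relying crucially on the standing assumption~\eqref{equal} that all internal edges have the same length~$a$ (and the fact that all external edges are identified with the same half-line $[0,\infty)$); without this, the fibrewise linear combinations appearing in the definitions would have no meaning, since functions $\psi_i$ on edges of different lengths could not be added.

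First, linearity of $\Phi_{G(\Ee)}$ and $\Phi_{G(\Ie)}$ is immediate: the formulas $(\Phi_G \psi)_j(x) = \sum_i G_{ji} \psi_i(x)$ are componentwise linear combinations, and scalar linearity in $\psi$ carries over. For the composition identity, I would simply compute
\begin{equation*}
(\Phi_G \Phi_H \psi)_j(x)
= \sum_i G_{ji} (\Phi_H \psi)_i(x)
= \sum_{i,k} G_{ji} H_{ik} \psi_k(x)
= \sum_k (GH)_{jk}\psi_k(x)
= (\Phi_{GH}\psi)_j(x),
\end{equation*}
which gives $\Phi_G\circ\Phi_H = \Phi_{GH}$. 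Invertibility with the stated formula is then a corollary: taking $H=G^{-1}$ yields $\Phi_G\circ\Phi_{G^{-1}} = \Phi_{\mathds{1}} = \mathrm{id}$, and likewise on the other side.

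For the domain assertions, I would observe that under~\eqref{equal} the summands $\psi_i$ of an element of $\De_{\Ie}$ all live on the same interval $[0,a]$ and are individually in $W^{2,2}$; a finite linear combination of $W^{2,2}$-functions on $[0,a]$ is again in $W^{2,2}$, so each component of $\Phi_{G(\Ie)}\psi$ lies in $\De_i$, and hence $\Phi_{G(\Ie)}\psi \in \De_{\Ie}$. The same argument, replacing $[0,a]$ by $[0,\infty)$ and absolute continuity together with square integrability of the second derivative being preserved under finite sums, handles $\De_{\Ee}$.

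Finally, for the boundary-value formulas, I would just evaluate the definition of $\Phi_G\psi$ at the appropriate endpoint. For $\psi\in\De_{\Ie}$ and $j\in\Ie$,
\begin{equation*}
(\Phi_{G(\Ie)}\psi)_j(0) = \sum_i G(\Ie)_{ji} \psi_i(0),
\qquad
(\Phi_{G(\Ie)}\psi)_j(a) = \sum_i G(\Ie)_{ji} \psi_i(a),
\end{equation*}
which says exactly $\underline{\Phi_{G(\Ie)}\psi}_{\Ie,\pm} = G(\Ie)\,\underline{\psi}_{\Ie,\pm}$; the same evaluation applied to the derivative (using equal lengths again so that the outward-derivative sign at $x=a$ is coherent across edges) gives the transformation of $\underline{\psi}^{\prime}_{\Ie}$. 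The external-edge formulas are the analogous one-line evaluation at $x=0$. None of the steps involves any real obstacle: the only substantive point is a clear articulation of where the hypothesis~\eqref{equal} (and the uniform identification of all external edges with $[0,\infty)$) is used, namely in making the componentwise sum well-defined and in preserving the $W^{2,2}$-regularity intervalwise.
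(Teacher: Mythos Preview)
Your proof is correct and is exactly the natural direct verification; in fact, the paper states this proposition without proof, treating all assertions as immediate consequences of the definitions. Your explicit articulation of where the equal-length hypothesis~\eqref{equal} enters (to make the componentwise sums well-defined and to preserve the $W^{2,2}$-regularity) is a helpful addition.
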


\subsection{The main result}
Taking advantage of the transformation of the boundary values one obtains
\begin{theorem}\label{thm1v2} 
Let $(\Ge,\au)$ be a finite metric graph with equal internal edge lengths, 
\ie~\eqref{equal} holds.
Let $A,B$ and $A^{\prime},B^{\prime}$ be linear maps in $\Ke$ such that 
\begin{eqnarray*}
A^{\prime}= G^{-1} A G &\mbox{and} & B^{\prime}= G^{-1} B G,
\end{eqnarray*}
for an invertible operator $G$ in $\Ke$ of the block diagonal form 
\begin{eqnarray}\label{G}
G=\begin{bmatrix} G(\Ee) & 0 & 0 \\ 0 & G(\Ie) & 0 \\ 0 & 0 & G(\Ie) \end{bmatrix}
\end{eqnarray}
with $G(\Ee)$ an invertible operator in $\Ke_{\Ee}$ and $G(\Ie)$ an invertible operator in $\Ke_{\Ie}^{-}$. Then the Laplacians $-\Delta(A,B)$ and $-\Delta(A^{\prime},B^{\prime})$ are similar to each other, \ie
$$\Phi_{G^{-1}} \Delta(A,B) \Phi_{G}= \Delta(A^{\prime},B^{\prime})$$
with similarity transform 
\begin{equation}\label{Phi}
\Phi_{G^{-1}}:=\Phi_{G(\Ee)^{-1}}\oplus \Phi_{G(\Ie)^{-1}}.
\end{equation}
\end{theorem}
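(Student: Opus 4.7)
The plan is to verify the similarity statement by two checks: (i) $\Phi_G$ intertwines the two operators on the maximal domain $\De$, and (ii) $\Phi_G$ maps $\Dom(\Delta(A',B'))$ bijectively onto $\Dom(\Delta(A,B))$. The equal-length assumption \eqref{equal} is what makes $\Phi_{G(\Ie)}$ well defined, since a linear combination $\sum_i G(\Ie)_{ji} \psi_i(x_j)$ only makes sense when all internal edges are parametrised by the same interval $[0,a]$; on the external edges no such constraint arises because they are all parametrised by $[0,\infty)$.

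First I would observe that since the second-derivative operator acts edgewise and $\Phi_G$ is a linear combination with constant coefficients on each block, one has $\Delta \Phi_G \psi = \Phi_G \Delta \psi$ for every $\psi \in \De$. By Proposition~\ref{PhiG}, $\Phi_G$ preserves $\De$ and is invertible with inverse $\Phi_{G^{-1}}$, so on $\De$ we have the algebraic identity $\Phi_{G^{-1}} \Delta \Phi_G = \Delta$.

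Next, the domain compatibility. Using Proposition~\ref{PhiG} again, for $\psi \in \De$ the boundary traces transform as
\begin{equation*}
\underline{\Phi_G \psi} = G\,\underline{\psi}, \qquad \underline{\Phi_G \psi}{}^{\prime} = G\,\underline{\psi}^{\prime},
\end{equation*}
where $G$ is the block-diagonal matrix~\eqref{G} acting componentwise on the decomposition $\Ke = \Ke_{\Ee} \oplus \Ke_{\Ie}^{-} \oplus \Ke_{\Ie}^{+}$ (this is exactly where the assumption that both $\Ie$-blocks of $G$ are identical enters: it lets the same $G$ act on $\underline{\psi}$ and on $\underline{\psi}^{\prime}$). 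Then $\Phi_G \psi \in \Dom(\Delta(A,B))$ if and only if
\begin{equation*}
0 = A\,\underline{\Phi_G \psi} + B\,\underline{\Phi_G \psi}{}^{\prime} = AG\,\underline{\psi} + BG\,\underline{\psi}^{\prime} = G\bigl(A'\underline{\psi} + B'\underline{\psi}^{\prime}\bigr),
\end{equation*}
and invertibility of $G$ makes this equivalent to $\psi \in \Dom(\Delta(A',B'))$. Combining with the intertwining on $\De$ yields $\Phi_{G^{-1}} \Delta(A,B) \Phi_G = \Delta(A',B')$ as operators.

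The calculations here are routine once one has Proposition~\ref{PhiG}; the only real conceptual point to watch is that the similarity transform on the function space $\He_{\Ie}$ is forced to use the \emph{same} matrix $G(\Ie)$ on both ends of every internal edge, which is precisely the structural restriction expressed by the block form~\eqref{G}. So the one place where care is needed is explaining why \eqref{equal} and the repeated $G(\Ie)$-block in~\eqref{G} are both necessary for $\Phi_G$ to even be defined as a map $\He \to \He$ preserving $\De$.
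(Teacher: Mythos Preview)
Your proof is correct and follows essentially the same route as the paper: commute $\Phi_G$ with the maximal Laplacian on $\De$, then use the trace transformation from Proposition~\ref{PhiG} to match the boundary conditions via $AG\underline{\psi}+BG\underline{\psi}^{\prime}=G(A'\underline{\psi}+B'\underline{\psi}^{\prime})$. The only cosmetic difference is that the paper phrases the domain check with $\Phi_{G^{-1}}$ acting on $\Dom(\Delta(A,B))$ rather than with $\Phi_G$ acting on $\Dom(\Delta(A',B'))$, which is of course equivalent.
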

\begin{proof}
Let $(\Ge,\au)$ be a metric graph with equal internal edge lengths and
\begin{eqnarray*}
A^{\prime}= G^{-1} A G &\mbox{and} & B^{\prime}= G^{-1} B G,
\end{eqnarray*}
where $G$ is of the block-diagonal form given in the theorem. In order to prove that $\Delta(A^{\prime},B^{\prime})=\Phi_{G^{-1}} \Delta(A,B) \Phi_{G}$ one has to show 
\begin{itemize}
\item[(a)]  $\Phi_{G^{-1}}$ maps $\Dom \Delta(A,B)$ to $\Dom \Delta (A^{\prime}, B^{\prime})$;
\item[(b)] $\Phi_{G^{-1}} \Delta(A,B) \Phi_{G}\psi= \Delta(A^{\prime},B^{\prime})\psi$, for $\psi\in \Dom (A^{\prime},B^{\prime})$.
\end{itemize}
Note that $\Phi_G$ and $\Phi_{G^{-1}}$ commute with $\Delta$, 
and therefore~(b) holds.  

It remains to show that $\Phi_{G^{-1}} (\Dom \Delta(A,B))= \Dom (\Delta(A^{\prime},B^{\prime}))$. Since $\Phi_{G^{-1}}$ commutes with $\Delta$, 
it follows also that 
$\Phi_{G^{-1}} \Ran \Delta(A,B)= \Ran \Delta(A^{\prime},B^{\prime})$.
Consequently, by Lemma~\ref{PhiG}, 
$\Phi_{G^{-1}}$ maps $\De$ to $\De$. 
If $\psi\in \Dom(\Delta(A,B))$, then $\psi\in \De$ 
and~\eqref{AB} holds.
Applying it to the function $\Phi_{G^{-1}} \psi$,
we get
\begin{equation*}
G^{-1}\left\{A G G^{-1}\underline{\psi} + B GG^{-1}\underline{\psi}^{\prime}\right\} = 0,
\end{equation*}
therefore 
$\Phi_{G^{-1}} (\Dom (\Delta(A,B))) 
\subset \Dom (\Delta(A^{\prime},B^{\prime}))$. 
The other way round, 
one proves analogously 
$\Phi_{G} (\Dom( \Delta(A^{\prime},B^{\prime}))) \subset \Dom (\Delta(A,B))$. 
Since $\Phi_{G^{-1}}$ is a bijection this proves the claim.
\end{proof}

The main result of this section is now the following
direct consequence of Theorem~\ref{thm1v2}. 
\begin{corollary}\label{thm1} 
Let $(\Ge,\au)$ be a finite metric graph with equal internal edge lengths, 
\ie~\eqref{equal} holds.
\begin{enumerate}
\item
Let $A,B$ be linear maps in $\Ke$ such that
\begin{eqnarray*}
\mathfrak{S}(k,A,B) = G^{-1} U G,
\end{eqnarray*}
for an invertible operator $G$ in $\Ke$ of the block diagonal form \eqref{G}
with $G(\Ee)$ an invertible operator in $\Ke_{\Ee}$ and $G(\Ie)$ an invertible operator in $\Ke_{\Ie}^{-}$. Then the Laplacians $-\Delta(A,B)$ and $-\Delta(A_U,B_U)$ with
\begin{eqnarray*}
A_{U}:= - \frac{1}{2} \left(U -\mathds{1}\right) & \mbox{and} & B_{U}:=  \frac{1}{2ik} \left(U +\mathds{1}\right).
\end{eqnarray*}
are similar to each other with the similarity transform given in \eqref{Phi}. In particular, if $U$ is unitary then $-\Delta(A,B)$ is similar to a self-adjoint Laplacian.
\item
Let $L,P$ and $L^{\prime},P^{\prime}$ define m-sectorial boundary conditions. Assume  furthermore that  there is an invertible operator $G$ in $\Ke$ of the block diagonal form \eqref{G} with $G(\Ee)$ an invertible operator in $\Ke_{\Ee}$ and $G(\Ie)$ an invertible operator in $\Ke_{\Ie}^{-}$ such that 
\begin{eqnarray*}
P= G^{-1}P^{\prime}G & \mbox{and} & L = G^{-1}L^{\prime}G.
\end{eqnarray*}
Then $-\Delta(P+L,P^{\perp})$ and $-\Delta(P^{\prime}+L^{\prime},(P^{\prime})^{\perp})$ are similar to each other with the similarity transform given in \eqref{Phi}. In particular, if $L^{\prime}$ is Hermitian then $-\Delta(P+L,P^{\perp})$ is similar to a self-adjoint Laplacian.
\end{enumerate}
\end{corollary}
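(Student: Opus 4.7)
The plan is to derive Corollary~\ref{thm1} as a direct consequence of Theorem~\ref{thm1v2}, reducing both statements to verifying that the parametrising matrices satisfy the hypothesis $A' = G^{-1} A G$ and $B' = G^{-1} B G$ of the theorem, after an appropriate choice of representatives in the equivalence class of boundary conditions.

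For part~(1), the first step is to replace $(A,B)$ by the equivalent pair $(A_\mathfrak{S}, B_\mathfrak{S})$ defined through the Cayley-type formulas~\eqref{AS}, so that $-\Delta(A,B) = -\Delta(A_\mathfrak{S}, B_\mathfrak{S})$ by the remarks of Subsection~\ref{subsec:regbc}. Using the assumption $\mathfrak{S}(k,A,B) = G^{-1} U G$ together with the trivial identity $G^{-1} \mathds{1} G = \mathds{1}$, a one-line computation yields
\begin{equation*}
A_\mathfrak{S} = -\tfrac{1}{2}\bigl(G^{-1}UG - \mathds{1}\bigr) = G^{-1} A_U G,
\qquad
B_\mathfrak{S} = \tfrac{1}{2ik}\bigl(G^{-1}UG + \mathds{1}\bigr) = G^{-1} B_U G.
\end{equation*}
Theorem~\ref{thm1v2} applied with the pair $(A_U, B_U)$ playing the role of $(A,B)$ and $(A_\mathfrak{S}, B_\mathfrak{S})$ the role of $(A',B')$ then gives $\Phi_{G^{-1}}\, \Delta(A_U, B_U)\, \Phi_G = \Delta(A_\mathfrak{S}, B_\mathfrak{S}) = \Delta(A,B)$, which is the claimed similarity. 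The last assertion follows since, when $U$ is unitary, $-\Delta(A_U,B_U)$ is a self-adjoint Laplacian by the standard parametrisation~\eqref{U}.

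For part~(2), the task is again to recast the hypothesis in the form required by Theorem~\ref{thm1v2}. Starting from $P = G^{-1}P'G$ and $L = G^{-1}L'G$ I would compute
\begin{align*}
P + L &= G^{-1}(P' + L')G, \\
P^{\perp} &= \mathds{1} - P = G^{-1}(\mathds{1} - P')G = G^{-1} (P')^{\perp} G,
\end{align*}
so that the defining matrices $A = P+L$, $B = P^{\perp}$ and $A' = P'+L'$, $B' = (P')^{\perp}$ are intertwined by $G$ exactly as required. Theorem~\ref{thm1v2} then produces the similarity $\Phi_{G^{-1}}\, \Delta(P'+L',(P')^{\perp})\, \Phi_G = \Delta(P+L,P^{\perp})$. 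If in addition $L'$ is Hermitian (so $L' = P'^{\perp}L'P'^{\perp}$ with $L'$ self-adjoint on $\Ker P'$), the target operator is a self-adjoint Laplacian by the parametrisation recalled at the beginning of the m-sectorial subsection, proving the final clause.

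There is essentially no obstacle in this proof; the only point requiring a brief check is the compatibility of the transformation rule with the structural constraint $L = P^{\perp} L P^{\perp}$, which is immediate because conjugation by $G$ maps $P'^{\perp}L'P'^{\perp}$ to $P^{\perp} L P^{\perp}$ by the first computation above. The substance of the argument has already been done in Theorem~\ref{thm1v2}; Corollary~\ref{thm1} merely recasts it in two forms convenient for the quasi-self-adjointness question.
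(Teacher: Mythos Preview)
Your proof is correct and follows essentially the same approach as the paper: pass to the equivalent representation $(A_{\mathfrak{S}},B_{\mathfrak{S}})$ via~\eqref{AS}, verify the conjugation relations $A_{\mathfrak{S}}=G^{-1}A_UG$, $B_{\mathfrak{S}}=G^{-1}B_UG$, and invoke Theorem~\ref{thm1v2}; for part~(2) the paper simply says ``the proof is analogous,'' whereas you spell out the (equally short) computation. Your added justifications for the ``in particular'' clauses and the compatibility check $L=P^{\perp}LP^{\perp}$ are correct extra detail the paper leaves implicit.
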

\begin{proof}
Consider the boundary conditions
\begin{eqnarray*}
A_{\mathfrak{S}}:= - \frac{1}{2} \left(\mathfrak{S} -\mathds{1}\right) & \mbox{and} & B_{\mathfrak{S}}:=  \frac{1}{2ik} \left(\mathfrak{S} +\mathds{1}\right),
\end{eqnarray*}
and $k>0$ such that $A_{\mathfrak{S}}+ikB_{\mathfrak{S}}$ is invertible, where $\mathfrak{S}:=\mathfrak{S}(k,A,B)$. These are equivalent to the boundary conditions defined by $A,B$. By assumption there is an invertible operator $G$ in $\Ke$ such that 
\begin{eqnarray*}
A_{\mathfrak{S}}= G^{-1} A_U G &\mbox{and} & B_{\mathfrak{S}}= G^{-1} B_U G.
\end{eqnarray*}
Applying Theorem \ref{thm1v2} proves the claim. 
For m-sectorial boundary conditions the proof is analogous.
\end{proof}

\begin{remark}
Corollary~\ref{thm1} can be alternatively proven by using the resolvent formula given in Proposition~\ref{prop:res} by proving the similarity of the resolvents where the similarity transforms are given by means of $\Phi_G$.  
\end{remark}

\subsection{Application to star graphs}
Theorem~\ref{thm1v2} simplifies in the case of star graphs.
Here a \emph{non-compact star graph} is a graph with $\Ie=\emptyset$, 
and a \emph{compact star graph with equal edge lengths} 
is a graph with $\Ee=\emptyset$ and $a_i=a$ for all $i\in \Ie$ 
such that $\partial_- (i)=\partial_- (i^{\prime})$ 
for any $i,i^{\prime}\in \Ie$ and $\partial_+ (i)\neq\partial_+ (i^{\prime})$ whenever $i \neq i^{\prime}$. 

For a non-compact star graph consider the operator $-\Delta(A,B)$ where $A,B$ are linear maps in $\Ke$. Two operators $-\Delta(A,B)$ and $-\Delta(A^{\prime},B^{\prime})$ are similar whenever there exists an invertible operator $G$ in $\Ke_{\Ee}$ such that 
\begin{eqnarray*}
A^{\prime}= G^{-1} A G &\mbox{and} & B^{\prime}= G^{-1} B G.
\end{eqnarray*}
For the case of regular boundary conditions one has to check only if the matrices $\mathfrak{S}(k,A,B)$ and $\mathfrak{S}(k,A^{\prime},B^{\prime})$ are similar to each other.

In order to have an equally simple criterion for a compact star graph, 
one can consider $-\Delta(A,B)$ with boundary conditions of the form 
\begin{eqnarray}\label{bc:cg}
A=\begin{bmatrix} A^- & 0 \\ 0 & A^+ \end{bmatrix} & \mbox{and} & B=\begin{bmatrix} B^- & 0 \\ 0 & B^+ \end{bmatrix},
\end{eqnarray}
where $A^-,B^-$ are arbitrary linear maps in $\Ke_{\Ie}^-$, and $A^+= a^+ \mathds{1}_{\Ke_{\Ie}^+}$ and $B^+= b^+ \mathds{1}_{\Ke_{\Ie}^+}$ with $a^+,b^+\in \C$. 

Let $A^-,B^-$ and $A^{\prime-},B^{\prime-}$ be linear maps in $\Ke^-$ such that 
\begin{eqnarray*}
A^{\prime-}= G^{-1} A^- G &\mbox{and} & B^{\prime-}= G^{-1} B^- G,
\end{eqnarray*}
for an invertible linear operator $G$ in $\Ke^-$. Consider boundary  conditions $A^{\prime},B^{\prime}$ of the form \eqref{bc:cg} defined by $A^{\prime-},B^{\prime-}$ and $a^+,b^+\in\C$, and $A,B$ also of the form \eqref{bc:cg} defined by $A^{-},B^{-}$ and the same numbers $a^+,b^+\in\C$. Then $-\Delta(A,B)$ and $-\Delta(A^{\prime},B^{\prime})$ are similar to each other with similarity transform $\Phi_{G^{-1}}$. Again, for the case of regular boundary conditions one has to check only if the matrices $\mathfrak{S}(k,A,B)$ and $\mathfrak{S}(k,A^{\prime},B^{\prime})$ are similar to each other. For taking into account only the boundary conditions at the central vertex it is crucial to impose identical boundary conditions at all endpoints.

\begin{Example}[Special case of Example~\ref{gsgnsgn}]
Consider Example \ref{gsgnsgn} 
for $\abs{\Ee}=3$, with $\abs{\Ee_-}=1$, $\abs{\Ee_+}=2$. Note that
\begin{eqnarray*}
\mathfrak{S}(k, A,B)= G 
\begin{bmatrix}
1 & 0 & 0\\ 0 & -1 & 0 \\ 0 & 0 & -1
\end{bmatrix} G^{-1} & \mbox{with } G=\begin{bmatrix}
1 & -1 & 1\\ 1 & 1 & 0 \\ 1 & 0 & 1
\end{bmatrix}.
\end{eqnarray*}
Hence, by Corollary \ref{thm1}, the operator $-\Delta(A,B)$ is unitarily equivalent to a self-adjoint Laplacian, 
namely to the direct sum of two Neumann Laplacians 
and one Dirichlet Laplacian on the half-line.
\end{Example}
\begin{Example}[Star graph with both essential and discrete spectra]
Consider a star graph with only two external edges and the m-sectorial boundary conditions defined by 
\begin{eqnarray*}
P=0 & \mbox{and} & L=\begin{bmatrix} 0 & 2 \\ 1/2 & 0 \end{bmatrix},
\end{eqnarray*}
that is $2\psi_2(0)+\psi_1^{\prime}(0)=0$ and $1/2 \psi_1(0)+\psi_2^{\prime}(0)=0$. Since 
\begin{eqnarray*}
L=\begin{bmatrix} 1/2 & 0 \\ 0 & 1/4 \end{bmatrix} L^{\prime} \begin{bmatrix} 2 & 0 \\ 0 & 4 \end{bmatrix}, & \mbox{where } L^{\prime}=\begin{bmatrix} 0 & 1 \\ 1 & 0 \end{bmatrix},
\end{eqnarray*}
one has by Corollary~\ref{thm1} that $-\Delta(L,\mathds{1})$ is similar to the self-adjoint operator $-\Delta(L^{\prime},\mathds{1})$. Hence, the continuous spectrum of $-\Delta(L,\mathds{1})$ is $[0,\infty)$ and the point spectrum contains only the isolated simple eigenvalue $-1$.   
\end{Example}
%



\subsection{Application to Example~\ref{ex1}}\label{Sec.similar}
During our work we had in mind, as a guiding example, the class of point interactions defined at point zero on the intervals $(-L,L)$, $L \in (0,+\infty]$ by
\begin{eqnarray*}
\begin{bmatrix}
\psi(0+) \\ \psi^{\prime}(0+)
\end{bmatrix}
=\begin{bmatrix}
e^{i\tau}& 0 \\ 0 & e^{-i\tau}
\end{bmatrix}
\begin{bmatrix}
\psi(0-) \\ \psi^{\prime}(0-)
\end{bmatrix} & \mbox{for }\tau\in [0,\pi/2],
\end{eqnarray*}
which is also discussed in Example~\ref{ex1} above. Actually, this has been the starting point of our study, and now we are in the position to apply our results to reproduce some of the results known for it.

\subsubsection{Regular case}
Let $\tau \in [0,\pi/2)$.
For the Cayley transform
\begin{eqnarray*}
\mathfrak{S}(A_{\tau},B_{\tau},k)= - (A_{\tau}+ ik B_{\tau})^{-1}(A_{\tau}- ik B_{\tau}), 
\end{eqnarray*}
an explicit computation yields the diagonalisation
\begin{eqnarray*}
\frac{1}{\cos(\tau)} \begin{bmatrix} i \sin(\tau) & 1 \\ 1 & -i\sin(\tau) \end{bmatrix}= \frac{-1}{2\cos(\tau)}\begin{bmatrix} 1 & 1 \\ e^{-i\tau} & -e^{i\tau} \end{bmatrix} 
\begin{bmatrix} 1 & 0 \\ 0 & -1 \end{bmatrix} \begin{bmatrix} -e^{i\tau} & -1 \\ -e^{-i\tau} & 1 \end{bmatrix}.
\end{eqnarray*}
Hence, one has using $\diag \{1, -1\}=Q\mathfrak{S}(A_{0},B_{0},k)Q$ the similarity 
\begin{eqnarray*}
\mathfrak{S}(A_{\tau},B_{\tau},k)= G_{\tau}^{-1} Q \mathfrak{S}(A_{0},B_{0},k)Q G_{\tau},
\end{eqnarray*}
where
\begin{eqnarray*}
Q=\frac{1}{\sqrt{2}}\begin{bmatrix} 1 & 1 \\ 1 & -1\end{bmatrix} & \mbox{and} & G_{\tau}=\frac{i}{\sqrt{2\cos(\tau)}}\begin{bmatrix} -e^{i\tau} & -1 \\ -e^{-i\tau} & 1 \end{bmatrix}.
\end{eqnarray*}
From Corollary~\ref{thm1} it follows that the operator $-\Delta(A_{\tau},B_{\tau})$ is similar to the self-adjoint Laplacian $-\Delta(A_{0},B_{0})$, and the similarity transform is given by $\Phi_{QG_{\tau}}$:
\begin{eqnarray*}
\Delta(A_{0},B_{0})=\Phi_{(QG_{\tau})^{-1}}\Delta(A_{\tau},B_{\tau})\Phi_{QG_{\tau}}.
\end{eqnarray*}
In fact, $-\Delta(A_{0},B_{0})$ is the standard Laplacian on the real line. 

One can now compute a metric operator, 
\ie\ the operator $\Theta_{\tau}$ such that
\begin{eqnarray*}
\Delta(A_{\tau},B_{\tau})^{\ast}= \Theta_{\tau} \Delta(A_{\tau},B_{\tau}) \Theta_{\tau}^{-1}.
\end{eqnarray*}
Since~$Q$ is unitary, a metric is given by the formula
\begin{eqnarray*}
\Theta_{\tau}=\Phi_{(G_{\tau}^{\ast} G_{\tau})^{-1}}, 
& \mbox{where} \quad 
\displaystyle{(G_{\tau}^{\ast}G_{\tau})^{-1}}= \frac{1}{\cos(\tau)}\begin{bmatrix} 1 & i \sin(\tau) \\ -i\sin(\tau) & 1\end{bmatrix}. 
\end{eqnarray*}
We also have $\Theta_{\tau}^{-1}=\Phi_{G_{\tau}^{\ast} G_{\tau}}$, where
\begin{eqnarray*}
{G_{\tau}^{\ast} G_{\tau}}
= \frac{1}{\cos(\tau)}\begin{bmatrix} 1 & -i \sin(\tau) \\ i\sin(\tau) & 1\end{bmatrix}. 
\end{eqnarray*}
One can rewrite this as
\begin{eqnarray*}
\Theta_{\tau}=  \frac{1}{\cos(\tau)}\left[ \mathds{1} - i\sin(\tau ) M_{\sgn} \cP \right] & \mbox{and} & \Theta_{\tau}^{-1}=  \frac{1}{\cos(\tau)}\left[ \mathds{1} + i\sin(\tau ) M_{\sgn} \cP \right]. 
\end{eqnarray*}
Here the operator~$\cP$ interchanges the edges 
(therefore it corresponds in fact to the parity operator
in a quantum-mechanical interpretation of the model)  
and~$M_{\sgn}$ denotes the multiplication by $+1$ 
on the first edge and by $-1$ on the second edge 
(therefore, identifying the graph with the real line, 
$M_{\sgn}$ corresponds to the multiplication by $\sgn$). 
This is, up to a constant factor, 
the metric operator given in~\cite{Petr}. 

Considering the same boundary conditions at the central vertex on the compact star graph with two edges one obtains that the operators is similar to a self-adjoint Laplacian for any self-adjoint boundary condition imposed at both endpoints simultaneously, in particular for Dirichlet boundary conditions as considered in \cite[Sec.~4]{Petr}. In all cases a similarity transform is given by $\Phi_{QG_{\tau}}$ and a metric operator is given by $\Phi_{(G_{\tau}^{\ast}G_{\tau})^{-1}}$.

\subsubsection{Irregular case}\label{subsec:irr}
Let $\tau = \pi/2$. 
One has
\begin{eqnarray*}
\begin{bmatrix} 0 & 1 \\ 0 & 0   \end{bmatrix}=\frac{1}{2}A_{\pi/2}\begin{bmatrix} 1 & 1 \\ -i & i   \end{bmatrix} & \mbox{and} & \begin{bmatrix} 0 & 0 \\ 0 & 1    \end{bmatrix}=\frac{1}{2} B_{\pi/2}\begin{bmatrix} 1 & 1 \\ -i & i   \end{bmatrix}. 
\end{eqnarray*}
Hence, by Theorem~\ref{thm1v2}, 
the operator $-\Delta(A_{\pi/2},B_{\pi/2})$ on the star graph with only two external edges is unitarily equivalent to $-\Delta(A^{\prime},B^{\prime})$ with 
\begin{eqnarray*}
A^{\prime}=\begin{bmatrix} 0 & 1 \\ 0 & 0   \end{bmatrix} & \mbox{and} & B^{\prime}=\begin{bmatrix} 0 & 0 \\ 0 & 1   \end{bmatrix}.
\end{eqnarray*}
These boundary conditions are $\psi_2(0)=\psi^{\prime}_2(0)=0$, that is $-\Delta(A^{\prime},B^{\prime})$ is the direct sum of the minimal operator $-\Delta^0$ on one edge and the maximal operator $-\Delta$ on the other edge.   
Recall that $\sigma(-\Delta(A_{\pi/2}, B_{\pi/2}))=\C$.

\subsubsection{Irregular compact case}
Consider a compact star graph and let more generally $A^-,B^-$ define arbitrary irregular boundary conditions at the central vertex, and let $a^+,b^+$ with $\Rank (a^+,\, b^+)=1$ define boundary conditions at the end points, such that one obtains boundary conditions of the form~\eqref{bc:cg}. Hence there is a $\underline{\psi}\in\Ker A^- \cap \Ker B^-\neq \{0\}$ with $\norm{\underline{\psi}}=1$ and there is a unitary map in $\Ke$ mapping $\underline{\psi}$ to a unit vector $e_i$, $i\in\Ie$. The boundary conditions 
\begin{eqnarray*}
A^{\prime-}= A^-U & \mbox{and} & B^{\prime-}=B^-U
\end{eqnarray*}
define a unitarily equivalent operator, but the edge $i$ is decoupled from the rest of the graph and the operator on this edge has domain
\begin{eqnarray*}
\{\psi\in \De_j \mid a^+\psi(a) - b^+\psi^{\prime}(a)=0\}
\end{eqnarray*}
which defines by Proposition~\ref{prop:dimM} operator with entire $\C$ in the spectrum. This shows that also the operator defined on a compact star graph with only two internal edges of equal length where the boundary conditions at the central vertex are given by $A_{\tau},B_{\tau}$ and at the endpoint arbitrary regular boundary conditions are imposed has empty resolvent set. This reproduces some of the result form \cite[Prop.~6]{Petr}. 

\subsubsection{Relation to Example~\ref{sgnsgn}}
Consider the boundary conditions defined by
\begin{eqnarray*}
A=\begin{bmatrix} 1 & -1 \\ 0 & 0   \end{bmatrix} & \mbox{and} & B=\begin{bmatrix} 0 & 0 \\ 1 & -1   \end{bmatrix}.
\end{eqnarray*}
Then one obtains 
\begin{eqnarray*}
\frac{1}{\sqrt{2}}AU=\begin{bmatrix} 1 & 0 \\ 0 & 0   \end{bmatrix}, &  \frac{1}{\sqrt{2}}BU=\begin{bmatrix} 0 & 0 \\ 1 & 0   \end{bmatrix}, & \mbox{where } U=\frac{1}{\sqrt{2}}\begin{bmatrix} 1 & 1 \\ 1 & -1   \end{bmatrix},
\end{eqnarray*}
and $U$ maps $\Ker A \cap \Ker B $ to $\Span \{e_2\}$. These boundary conditions define on one edge the minimal operator $-\Delta^0$ and on the other edge the maximal operator $-\Delta$. For a compact star graph with these boundary conditions at the central vertex the same holds. 

Note that for $A_{\pi/2}, B_{\pi/2}$ one has
\begin{eqnarray*}
\begin{bmatrix} 1 & -1 \\ 0 & 0   \end{bmatrix}=A_{\pi/2}\begin{bmatrix} 1 & 0 \\ 0 & -i   \end{bmatrix} & \mbox{and} & \begin{bmatrix} 0 & 0 \\ 1 & -1    \end{bmatrix}=B_{\pi/2}\begin{bmatrix} 1 & 0 \\ 0 & -i   \end{bmatrix}, 
\end{eqnarray*}
hence the operator defined by $A_{\pi/2},B_{\pi/2}$ at the central vertex of a star graph with two edges of equal, possibly infinite, length is unitarily equivalent to the operator $-\sgn(x)\frac{d}{dx}\sgn(x)\frac{d}{dx}$, if in addition at the endpoint the same boundary conditions are imposed.

\subsection{Applications to self-adjoint Laplacians}\label{subsec:star}
Theorem~\ref{thm1v2} and its Corollary~\ref{thm1} 
can also be interestingly applied to self-adjoint Laplacians, 
in order to simplify the computation of the spectrum. 
Consider a compact star graph 
(see Figure~\ref{Fig}(a) for an example with three edges)
with standard (or Kirchhoff)
boundary condition at the central vertex $v$, where $\deg(v)=\nu$, \ie
\begin{align}\label{bc:st}
A_{\nu}= \left[
   \begin{array}{cccccc}
     1 & -1 & 0 &\cdots & 0 & 0 \\
     0 & 1 & -1 &\cdots & 0 & 0  \\
     0 & 0 & 1 &\cdots & 0 & 0  \\
     \vdots &\vdots  & \vdots & & \vdots  & \vdots \\
        0 & 0 & 0 &\cdots & 1 & -1  \\
   0 & 0 & 0 &\cdots & 0 & 0
   \end{array}
\right], && B_{\nu}= \left[
   \begin{array}{cccccc}
     0 & 0 & 0 &\cdots & 0 & 0 \\
     0 & 0 & 0 &\cdots & 0 & 0  \\
     0 & 0 & 0 &\cdots & 0 & 0  \\
     \vdots &\vdots  & \vdots & & \vdots  & \vdots \\
        0 & 0 & 0 &\cdots & 0 & 0  \\
   1 & 1 & 1 &\cdots & 1 & 1 
   \end{array}
\right].
\end{align}
It is known that $$\left[\mathfrak{S}(k,A_{\nu},B_{\nu})\right]_{ij}= \frac{2}{\deg(v)}- \delta_{ij},$$ see, \eg, \cite[Ex.~2.4]{VKRS2006}, 
and furthermore, it admits the representation
\begin{equation*}
\mathfrak{S}(k,A_{\nu},B_{\nu}) = P_{\nu}^{\perp} -P_{\nu}, 
\end{equation*}
where $P_{\nu}$ is the orthogonal projector onto $\Ker B_{\nu}$ and its complementary projector $P_{\nu}^{\perp}=\mathds{1}-P_{\nu}$ is the orthogonal projector onto the space spanned by the vector $\{w_{\nu}\}_j=1$, $j=1,\ldots, \deg(v)$. Hence $\mathfrak{S}(k,A_{\nu},B_{\nu})$ has the eigenvalues $-1$ of multiplicity $\deg(v)-1$ and $+1$ of simple multiplicity. At the ends of the leads one imposes for example Dirichlet boundary conditions. 
Then by applying Theorem~\ref{thm1v2} one obtains that this operator is iso-spectral to a direct sum of operators on intervals. Namely, $\deg(v)-1$ Dirichlet Laplacians on intervals $[0,a]$ and one Laplacian on $[0,a]$ with Dirichlet boundary condition at~$a$ and Neumann boundary condition at~$0$. 
This provides a complete picture of the spectrum. The spectrum is purely discrete and the solutions $k_n$, $n\in \N$, of $\sin(ka)=0$ yield eigenvalues $k_n^2$ of multiplicity $\deg(v)-1$ and the solutions $k_m$, $m\in \N$, of $\cos(ka)=0$ yield eigenvalues $k_m^2$ of multiplicity one.

\begin{figure}[h]
\begin{center}
\subfigure[A compact star graph]{
\begin{tikzpicture}[scale=0.8]
\fill[black] (0,0) circle (1ex);
\draw[->, black, very thick] (0,0) -- (0,1.5);
\draw[black, very thick] (0,1.5) -- (0,2.9);
\fill[black] (0,2.9) circle (1ex);
\draw[->, black, very thick] (0,0) -- (1,-1);
\draw[black, very thick] (1,-1) -- (2,-2);
\fill[black] (2,-2) circle (1ex);
\draw[->, black, very thick] (0,0) -- (-1,-1);
\draw[black, very thick] (-1,-1) -- (-2,-2);
\fill[black] (-2,-2) circle (1ex);
\end{tikzpicture}
} \ \ \ \ \  \
\subfigure[Cubic graph]{
\begin{tikzpicture}[scale=2.5]
    \coordinate (A1) at (0, 0);
    \coordinate (A2) at (0, 1);
    \coordinate (A3) at (1, 1);
    \coordinate (A4) at (1, 0);
    \coordinate (B1) at (0.3, 0.3);
    \coordinate (B2) at (0.3, 1.3);
    \coordinate (B3) at (1.3, 1.3);
    \coordinate (B4) at (1.3, 0.3);
    
    \coordinate (A1A2) at (0, 0.5);
    \coordinate (A1A4) at (0.5, 0);
    \coordinate (A3A2) at (0.5, 1);
    \coordinate (A3A4) at (1, 0.5);
    \coordinate (B2B3) at (0.8, 1.3);
    \coordinate (B3B4) at (1.3, 0.8);
    \coordinate (B4B1) at (0.8, 0.3);
    \coordinate (B1A1) at (0.15, 0.15);
    \coordinate (B2A2) at (0.15, 1.15);
    \coordinate (B3A3) at (1.15, 1.15);
    \coordinate (B4A4) at (1.15, 0.15);

    \draw[->, very thick] (A1) -- (0, 0.5);
    \draw[very thick] (0, 0.5) -- (A2);
    \draw[very thick] (A2) -- (A3);
    \draw[very thick] (A3) -- (A4);
    \draw[->, very thick] (A1) -- (0.5, 0);
    \draw[very thick] (0.5, 0) -- (A4);

    \draw[dashed] (0.15, 0.15) -- (B1);
    \draw[->, dashed] (A1) -- (0.15, 0.15);

    \draw[dashed] (B1) -- (0.3, 0.8);
    \draw[->, dashed] (B2) -- (0.3, 0.8);
    \draw[->, very thick] (B2) -- (B2A2);
    \draw[very thick] (B2A2) -- (A2);
    \draw[->, very thick] (B2) -- (B2B3);
    \draw[very thick] (B2B3) -- (B3);
    \draw[->, very thick] (A3) -- (B3A3);
    \draw[very thick] (B3A3) -- (B3);
    \draw[->, very thick] (A3) -- (A3A2);
    \draw[very thick] (A3A2) -- (A2);
    \draw[->, very thick] (A3) -- (A3A4);
    \draw[very thick] (A3A4) -- (A4);
    \draw[->, very thick] (B4) -- (B4A4);
    \draw[very thick] (B4A4) -- (A4);
    \draw[->, very thick] (B4) -- (B3B4);
    \draw[very thick] (B3B4) -- (B3);
    \draw[->, dashed] (B4) -- (B4B1);
    \draw[dashed] (B4B1) -- (B1);

\fill[black] (A1) circle (0.3ex);
\fill[black] (A2) circle (0.3ex);
\fill[black] (A3) circle (0.3ex);
\fill[black] (A4) circle (0.3ex);
\fill[gray] (B1) circle (0.3ex);
\fill[black] (B2) circle (0.3ex);
\fill[black] (B3) circle (0.3ex);
\fill[black] (B4) circle (0.3ex);

\end{tikzpicture}
}
\caption{Graphs considered in Subsection~\ref{subsec:star}}\label{Fig}
\end{center}
\end{figure}
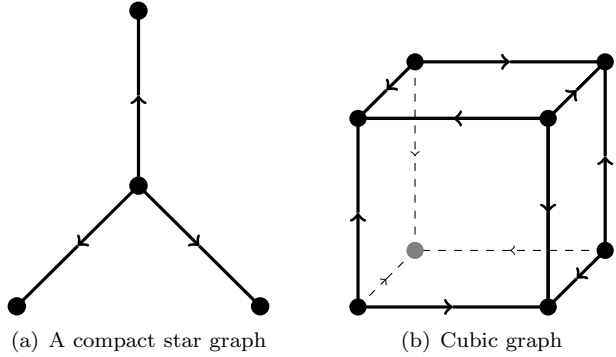

Consider as a further example a cube with equal edge lengths
(see Figure~\ref{Fig}(b)). 
Each vertex is a vertex of degree $\nu=3$ and one imposes at the vertices the standard boundary conditions \eqref{bc:st}. For this graph $\abs{\Ie}=12$ and the boundary conditions have the following block structure
\begin{eqnarray*}
A=\begin{bmatrix} A_{++} & 0 \\ 0 & A_{--}\end{bmatrix} &\mbox{and} & B=\begin{bmatrix} B_{++} & 0 \\ 0 & B_{--}\end{bmatrix}
\end{eqnarray*}
with respect to the decomposition $\Ke_{\Ie}=\Ke_{\Ie}^+ \oplus \Ke_{\Ie}^-$. Furthermore $A_{++}=A_{--}$ and $B_{++}=B_{--}$, where 
\begin{eqnarray*}
A_{++}=\begin{bmatrix} A_{3} & 0 & 0 & 0\\ 0 & A_{3} & 0 & 0 \\ 0 & 0& A_3 & 0 \\ 0 & 0 & 0 & A_3 \end{bmatrix} &\mbox{and} & B_{++}=\begin{bmatrix} B_{3} & 0 & 0 & 0\\ 0 & B_{3} & 0 & 0 \\ 0 & 0& B_3 & 0 \\ 0 & 0 & 0 & B_3 \end{bmatrix},
\end{eqnarray*}
where $A_3$ and $B_3$ are the matrices from \eqref{bc:st}. Applying as in the previous example the map $G_3$ that maps
\begin{eqnarray*}
G_3A_3 G_3^{-1}=\begin{bmatrix} 1 & 0 & 0 \\ 0 & 1 & 0 \\ 0 & 0 & 0 \end{bmatrix} &\mbox{and} & G_3B_3 G_3^{-1}=\begin{bmatrix} 0 & 0 & 0 \\ 0 & 0 & 0 \\ 0 & 0 & 1 \end{bmatrix} 
\end{eqnarray*}
one arrives at the conclusion that the Laplacian $-\Delta(A,B)$ with standard boundary conditions on the cube is unitarily equivalent to the direct sum of eight copies of the Dirichlet Laplacian on the interval and four copies of the Neumann Laplacian on the interval. This gives immediately the spectrum of the operator. In particular, the dimension of the  kernel of $-\Delta(A,B)$ is four, which follows already from the Euler characteristic of the graph which is $\abs{I}-\abs{V}=4$, see \cite[Thm.~20]{KFW}. 

More generally, one can consider any compact graph with equal edge lengths such that
\begin{itemize}
\item the degrees of all vertices agree and
\item one can find an orientation on the graph such that in each vertex there are either only incoming or only outgoing edges.
\end{itemize}
On such a graph one can impose of course also non-self-adjoint boundary conditions at the vertices, but one needs to impose at all vertices the same boundary conditions. If these boundary conditions are regular and the local scattering matrix is similar to a diagonal matrix then one can find an iso-spectral graph where all edges are decoupled and the calculation of the spectrum reduces to the case of intervals. 

For example, one can consider a compact graph with two edges $\Ie=\{i_1,i_2\}$ of equal length $a>0$ and two vertices $V=\{v_1,v_2\}$ with $\partial^{+}(i_1)=\partial^{+}(i_2)$ and $\partial^{-}(i_1)=\partial^{-}(i_2)$, that is a loop with two vertices. One imposes at each vertex boundary conditions $A_{\tau},B_{\tau}$ given in Example~\ref{ex1}, both with the same $\tau\in[0,\pi/2)$. Applying Corollary~\ref{thm1} delivers that the corresponding operator is similar to the Laplacian on the circle with arc length $2a$. Also, one can consider the cubic graph discussed above, where one imposes complex delta-couplings or the boundary conditions discussed in Example~\ref{gsgnsgn} at the vertices instead of the standard boundary conditions. 

\appendix
\section{Appendix}\label{Sec.App}
%
This appendix is devoted to the proof of Proposition~\ref{prop:res}.
\begin{definition}[{\cite[Def.~3.9]{KPS2008}}]\label{def:integralop}
The operator $\mathfrak{K}$ on the Hilbert space $\mathcal{H}$ is called \textit{integral operator} if for all $j,j^{\prime}\in \Ee \cup \Ie$ there are measurable functions $\mathfrak{K}_{j,j^{\prime}}(\cdot,\cdot)\colon I_j \times I_{j^{\prime}}\rightarrow \C$ with the following properties
\begin{enumerate}
\item $\mathfrak{K}_{j,j^{\prime}}(x_j,\cdot)\varphi_{j^{\prime}}\in L^1(I_{j^{\prime}})$ for almost all $x_j\in I_j$, 
\item $\psi = \mathfrak{K}\varphi$ with 
\begin{equation*}
\psi_j(x_j)= \sum_{j^{\prime}\in \Ee \cup \Ie} \int_{I_{j^{\prime}}} \mathfrak{K}_{j,j^{\prime}}(x_j,y_{j^{\prime}}) \varphi_{j^{\prime}}(y_{j^{\prime}}) d y_{j^{\prime}}.
\end{equation*}
\end{enumerate} 
The $(\Ie \cup \Ee) \times (\Ie \cup \Ee)$ matrix-valued function $(x,y) \mapsto \mathfrak{K}(x,y)$ with 
$$ [\mathfrak{K}(x,y)]_{j,j^{\prime}} = \mathfrak{K}_{j,j^{\prime}}(x_j,y_{j^{\prime}})$$
is called the \textit{integral kernel} of the operator $\mathfrak{K}$.
\end{definition}

In order to prove Proposition~\ref{prop:res}, 
we adapt the proof of \cite[Lem.~4.2]{VKRS2006}, 
where the resolvents of self--adjoint Laplace operators are considered, 
to the situation of more general regular boundary conditions.


By assumption the operator $\mathfrak{S}(k,A,B)$ is defined and $\mathds{1}-\mathfrak{S}(k,A,B)T(k;\au)$ is invertible for $k$ with $\Im k>0$. 
Hence the kernel $r_{\Me}(x,y;k)$ 
defined in Proposition~\ref{prop:res} is well-defined, 
and with $\Im k>0$ it defines a bounded operator $R_{\Me}(k)$ in $\He$ by 
\begin{eqnarray*}
R_{\Me}(k)\varphi = \int_{\Ge}r_{\Me}(\cdot,y;k)\varphi & \mbox{for } \varphi\in\He.
\end{eqnarray*}
In order to prove that $R_{\Me}(k)$ defines the resolvent operator, 
it suffices to check 
\begin{itemize}
\item[(i)] $R_{\Me}(k)\varphi\in \Dom(\Delta(A,B))$, for all $\varphi\in \He$,
\item[(ii)] $(-\Delta(A,B)-k^2)R_{\Me}(k)\varphi=\varphi$ for all $\varphi\in \He$ and
\item[(iii)] the symmetry relation $r_{\Me}(y,x;k)^{\ast}= r_{\Me^{\ast}}(x,y,-\overline{k})$.
\end{itemize}
The first two assertions prove that $(-\Delta(A,B)-k^2)R_{\Me}(k)=\mathds{1}_{\He}$ that is, $R_{\Me}(k)$ is the right inverse. By (iii) one proves that also $(-\Delta(A,B)^{\ast}-\overline{k}^2)R(k)_{\Me}^{\ast}=\mathds{1}_{\He}$, and taking the adjoints one obtains $R_{\Me}(k)(-\Delta(A,B)-k^2)\subset \mathds{1}_{\He}.$
This proves that $R_{\Me}(k)$ is also the left inverse.  

Using \cite[Lem.~4.2]{VKRS2006} and~\eqref{1ST},
one can also rewrite $r_{\Me}(x,y;k)$ as  
\begin{align*}
&r_{\Me}(x,y;k)= r^0(x,y;k)+r^1_{\Me}(x,y;k),\\ &r^1_{\Me}(x,y;k) =-\frac{i}{2k} \Phi(x,k) Z(k;A,B,\au)^{-1}(A-ikB) R_+(k;\au)^{-1}\Phi(y,k)^T.
\end{align*}
One can still prove (i) and (ii) whenever $Z(k;A,B,\au)$ is invertible proving that $R(k)$ defines the right inverse, but one cannot use the same proof for showing that the symmetry relation (iii) holds.

\begin{proof}[Proof of \emph{(i)}]
With $\psi= R_{\Me}(k)\varphi$, for $\varphi\in\He$ one has clearly $\psi\in\De$. Furthermore, set for brevity 
\begin{equation*}
G(k):= -Z(k; A,B,\au)^{-1}(A-ik B) R_+(k,\au)^{-1}.
\end{equation*}
Assume that $\varphi_j\in\He_j$ vanishes in a small neighbourhood of $x_j=0$ and, in addition, in a small
neighbourhood of $x_j=a_j$ if $j\in \Ie$. Then
\begin{equation*}
\int_{I_j} e^{ik \abs{x_j-y_j}}\varphi_j(y_j)dy_j= \int_{I_j} e^{-ik (x_j-y_j)}\varphi_j(y_j)dy_j
\end{equation*}
holds for sufficiently small $x_j\in I_j$, and for $x_j\in I_j$ sufficiently close to $a_j$ one has
\begin{equation*}
\int_{I_j} e^{ik \abs{x_j-y_j}}\varphi_j(y_j)dy_j= \int_{I_j} e^{ik (x_j-y_j)}\varphi_j(y_j)dy_j.
\end{equation*}
Therefore one obtains for the traces 
\begin{align*}
\underline{\psi}
&= \frac{i}{2k}R_+(k;\au)^{-1} \int_{\Ge} 
\Phi(y,k)^T \varphi(y) dy 
+ \frac{i}{2k} X(k;\au)G(k)\int_{\Ge} \Phi(y,k)^T \varphi(y) dy,
\\
\underline{\psi}^{\prime}
&=\frac{1}{2}R_+(k;\au)^{-1} \int_{\Ge} \Phi(y,k)^T \varphi(y) dy 
- \frac{1}{2} Y(k;\au)G(k)\int_{\Ge} \Phi(y,k)^T \varphi(y) dy. 
\end{align*}
Hence, 
\begin{align*}
A\underline{\psi} +B \underline{\psi}^{\prime}&= \frac{i}{2k}\left\{ (A-ikB)R_+(k;\au)^{-1}+ Z(k;A,B,\au)G(k)\right\}\int_{\Ge} \Phi(y,k)^T \varphi(y) dy\\ &=0.
\end{align*}
Thus $R_{\Me}(k)$ maps a dense subset of $\He$ to $\Dom(\Delta(A,B))$. By continuous continuation the claim follows for all $\varphi\in\He$ which proves (i). 
\end{proof}
\begin{proof}[Proof of \emph{(ii)}]
Assume that $\varphi_j\in C^{\infty}_0(I_j)$ for every $j\in \Ie\cup\Ee$. Since $\tfrac{i}{2k}e^{ik\abs{x-y}}$ defines the Green's function on the real line it follows that 
\begin{eqnarray*}
-\frac{i}{2k} \left(\frac{d^2}{dx_j^2}+k^2 \right)\int_{I_j}e^{ik\abs{x_j-y_j}}\varphi_j(y_j) dy_j = \varphi_j(x_j), & j\in \Ie \cup \Ee.
\end{eqnarray*}
Note that the remainder vanishes, and therefore one has proven 
the identity
$\left(-\Delta(A,B)-k^2\right)R_{\Me}(k)\varphi=\varphi$ 
for a dense subset of $\He$ 
and by continuous continuation the claim follows.
\end{proof}
\begin{proof}[Proof of \emph{(iii)}]
The relation $r^0(y,x;k)^{\ast}=r^0(x,y,-\overline{k})$ can be verified directly. For the remainder one obtains 
\begin{multline*}
r_{\Me}^1(y,x,k)^{\ast}
= \frac{i}{2(-\overline{k})} \Phi(x,-\overline{k}) 
R_+(-\overline{k};\au)^{-1}\mathfrak{S}(k,A,B)^{\ast}
\\
\times \left[\mathds{1}- 
T(-\overline{k};\au)\mathfrak{S}(k,A,B)^{\ast}\right]^{-1} 
R_+(-\overline{k};\au)^{-1}\Phi(y,-\overline{k})^T,
\end{multline*}
Note that
\begin{multline*}
\mathfrak{S}(k,A,B)^{\ast}\left[\mathds{1}-  T(-\overline{k};\au)\mathfrak{S}(k,A,B)^{\ast}\right]^{-1}
\\
=\left[\mathds{1}- \mathfrak{S}(k,A,B)^{\ast} T(-\overline{k};\au)\right]^{-1}\mathfrak{S}(k,A,B)^{\ast}
\end{multline*}
and 
$
\mathfrak{S}(k,A,B)^{\ast}= \mathfrak{S}(-\overline{k},A^{\prime},B^{\prime}),
$
where 
\begin{eqnarray*}
A^{\prime}:= - \frac{1}{2} \left(\mathfrak{S}(k,A,B)^{\ast} -\mathds{1}\right) & \mbox{and} & B^{\prime}:=  \frac{1}{-2i\overline{k}} \left(\mathfrak{S}(k,A,B)^{\ast} +\mathds{1}\right).
\end{eqnarray*}
From Proposition \ref{prop:adj} it follows that $r_{\Me}^1(y,x;k)^{\ast}=r^1_{\Me^{\ast}}(x,y;-\overline{k})$, and  therefore $R_{\Me}(k)^{\ast}=R_{\Me^{\ast}}(-\overline{k})$.
\end{proof}
%


\subsection*{Acknowledgment}
The first author would like to thank the Doppler Institute 
in Prague for the kind hospitality during his stay there
in spring 2013.
The work has been partially supported
by the project RVO61389005 and the GACR grant No.\ P203/11/0701. 
P.S. wishes to acknowledge the SCIEX Programme, the work has been conducted within the SCIEX-NMS Fellowship, project 11.263.


\end{document}